\documentclass[a4paper]{scrartcl}

\usepackage[utf8]{inputenc}
\usepackage[T1]{fontenc} 

\usepackage{amsmath, amsthm, amssymb}
\usepackage{mathrsfs} 
\usepackage{enumitem}
\usepackage{graphicx}
\newlist{hypothenum}{enumerate}{3}
\setlist[hypothenum,1]{label=(\roman*)}
\usepackage{url} 
\usepackage{needspace} 
\usepackage{tikz-cd} 
\usepackage[all,cmtip]{xy} 
\usepackage{graphics} 
\DeclareGraphicsRule{*}{mps}{*}{} 

\usepackage{colonequals} 
\newcommand*{\defiff}{\ratio\Longleftrightarrow}

\theoremstyle{plain}
\newtheorem*{theorem*}{Theorem}
\newtheorem{theorem}{Theorem}[subsection]
\newtheorem*{proposition*}{Proposition}

\newtheorem*{lemma*}{Lemma}
\newtheorem{lemma}[theorem]{Lemma}
\newtheorem*{corollary*}{Corollary}
\newtheorem{corollary}[theorem]{Corollary}
\newtheorem{corollary_global}{Corollary}
\newtheorem*{claim*}{Claim}


\theoremstyle{definition}
\newtheorem*{definition*}{Definition}

\newtheorem*{example*}{Example}

\newtheorem{example_global}{Example}

\theoremstyle{remark}
\newtheorem*{remark*}{Remark}
\newtheorem*{remarks*}{Remarks}

\numberwithin{equation}{section}

\newcommand{\eps}{\varepsilon}
\newcommand{\setsuch}[2]{\left\{ #1 \; \middle| \; #2 \right\}} 
\newcommand{\restr}[2]{{\left. #1 \right|}_{#2}} 
\newcommand{\ext}{\mathsf{\Lambda}}
\DeclareMathOperator{\rank}{rank}
\DeclareMathOperator{\characteristic}{char}
\DeclareMathOperator{\Ker}{Ker}
\DeclareMathOperator{\Ad}{Ad}
\DeclareMathOperator{\ad}{ad}
\DeclareMathOperator{\Hom}{Hom}
\DeclareMathOperator{\End}{End}
\DeclareMathOperator{\GL}{GL}
\DeclareMathOperator{\SL}{SL}
\DeclareMathOperator{\SO}{SO}

\DeclareMathOperator{\Unit}{U}
\DeclareMathOperator{\Sp}{Sp}
\newcommand{\fundef}[5]{
\entrymodifiers={+!!<0pt,\fontdimen22\textfont2>}
\xymatrix@R=3pt{\llap{$#1$\;\;} {#2} \ar@{->}[r] & {#3} \\ {#4} \ar@{|->}[r] & {#5}}
} 
\newcommand{\mathbffrakg}{\scalebox{1.25}{\ensuremath{\mathfrak{g}}}}
\newcommand{\mathbffraka}{\scalebox{1.25}{\ensuremath{\mathfrak{a}}}}

\newcommand{\ie}{i.e.\ }

\begin{document}

\title{Proper actions on reductive homogeneous spaces}
\author{Yves Benoist\footnote{Translated from French by Ilia Smilga. Original title: Actions propres sur les espaces homogènes réductifs, {\em Ann. of Math.}, 144:315--347, 1996.}}
\date{\vspace{-5ex}} 
\maketitle

\begin{abstract}
Let $G$ be a linear semisimple real Lie group and $H$~be a reductive subgroup of~$G$. We give a necessary and sufficient condition for the existence of a nonabelian free discrete subgroup~$\Gamma$ of~$G$ acting properly on~$G/H$. For instance, such a group~$\Gamma$ does exist for $\SL(2n, \mathbb{R})/\SL(2n-1, \mathbb{R})$ but does not for $\SL(2n+1, \mathbb{R})/\SL(2n, \mathbb{R})$ with~$n \geq 1$.
\end{abstract}

\section{Introduction}
\label{sec:1}

In this introduction, we state our results for the base field~$k = \mathbb{R}$. Later in the text we shall prove analogous results for any local field~$k$.

\subsection{Proper actions of free groups}
\label{sec:1.1}

Let $G$~be a connected linear semisimple real Lie group, $H$~a reductive connected closed subgroup in~$G$ (\ie such that the adjoint action of~$H$ in the Lie algebra of~$G$ is semisimple). We know that $G$ contains an infinite discrete subgroup~$\Gamma$ acting properly on~$G/H$ if and only if $\rank_\mathbb{R}(G) \neq \rank_\mathbb{R}(H)$: this is the Calabi-Markus phenomenon (\cite{Kob1}).

The main goal of this paper is to give a necessary and sufficient condition for existence of a nonabelian free discrete subgroup~$\Gamma$ of~$G$ that acts properly on~$G/H$.

To state this condition, we need a few notations. Let $A_H$~be a Cartan subspace of~$H$ (\ie a connected abelian subgroup composed of hyperbolic elements and maximal for these properties), $A$~a~Cartan subspace of~$G$ containing~$A_H$, $\Sigma := \Sigma(A, G)$ the restricted root system of~$A$ in~$G$, $W$~the Weyl group of~$\Sigma$, $\Sigma^+$~a choice of positive roots, $A^+ := \setsuch{a \in A}{\forall \chi \in \Sigma^+,\; \chi(a) \geq 1}$ the corresponding closed Weyl chamber, $\iota$~the opposition involution for~$A^+$ (for $a$ in~$A^+$, $\iota(a)$~is the unique element of~$A^+$ conjugate to~$a^{-1}$) and $B^+ := \setsuch{a \in A^+}{a = \iota(a)}$.

Note that $B^+$ differs from~$A^+$ if and only if one of the connected components of the Dynkin diagram of~$\Sigma$ is of type $A_n$ with~$n \geq 2$, $D_{2n+1}$ with~$n \geq 1$ or~$E_6$.

We say that a group~$\Gamma$ is virtually abelian if it contains a finite-index abelian subgroup.

\begin{theorem*}
With these notations, $G$ contains a non virtually abelian discrete subgroup~$\Gamma$ acting properly on~$G/H$ if and only if for every~$w \in W$, $w A_H$ does not contain~$B^+$.

In this case, we can choose~$\Gamma$ to be free and Zariski-dense in~$G$.
\end{theorem*}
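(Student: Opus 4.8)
The plan is to work entirely with the Cartan projection $\mu\colon G\to\mathfrak a^+$ (writing $\mathfrak a$, $\mathfrak a_H$, $\mathfrak a^+$ for the Lie algebras of $A$, $A_H$, $A^+$) and to reduce properness to a convex-geometric statement about cones. Two external facts drive everything: Kobayashi's criterion (\cite{Kob1}), that a discrete $\Gamma$ acts properly on $G/H$ precisely when $\mu(\Gamma)$ leaves every tubular neighborhood of $\mathcal C:=\mu(H)=W\mathfrak a_H\cap\mathfrak a^+$ (i.e.\ for each $R$ only finitely many $\gamma$ satisfy $d(\mu(\gamma),\mathcal C)\le R$); and the relation $\mu(g^{-1})=\iota(\mu(g))$, which forces any asymptotic cone of $\mu(\Gamma)$ to be $\iota$-invariant. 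Set $\mathfrak b^+:=\setsuch{X\in\mathfrak a^+}{\iota X=X}$, the Lie-algebra avatar of $B^+$. First I would record the translation of the hypothesis: since $\mathfrak b^+$ is a convex cone with nonempty relative interior inside the fixed space $\mathfrak a^\iota$, while each $w\mathfrak a_H$ is a subspace, the condition ``$wA_H\not\supseteq B^+$ for all $w$'' is equivalent to $\mathfrak b^+\not\subseteq W\mathfrak a_H$, i.e.\ to the existence of a point $X_0\in\mathfrak b^+\setminus\mathcal C$; a further elementary perturbation inside $\mathfrak b^+$ makes $X_0$ regular (in the interior of $\mathfrak a^+$).

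For the construction (sufficiency together with the ``free, Zariski-dense'' refinement), choose such a regular self-opposed $X_0\notin\mathcal C$ and a closed cone neighborhood $\mathcal U\ni X_0$ inside the interior of $\mathfrak a^+$ with $\mathcal U\cap\mathcal C=\{0\}$; angular compactness then gives $\eps>0$ with $d(X,\mathcal C)\ge\eps\lVert X\rVert$ for all $X\in\mathcal U$. The heart is to produce a free Zariski-dense $\Gamma$ with $\mu(\Gamma\setminus\{e\})\subseteq\mathcal U$ and $\mu(\gamma)\to\infty$. I would take loxodromic generators $g_1,\dots,g_r$ whose Jordan projections point very close to $X_0$ and whose attracting and repelling flags are in general position (possible because $X_0$ is regular and loxodromic elements are Zariski-dense in $G$); crucially, because $X_0$ is self-opposed, $\mu(g_i^{-1})=\iota(\mu(g_i))$ also points near $\iota(X_0)=X_0$, so generators and inverses alike have Cartan projection near the single direction $X_0$. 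Passing to high powers and running a ping-pong argument with quantitative control of the Cartan projections of products then yields a free group, Zariski-dense by the general position of the flags, all of whose nontrivial elements $\gamma$ satisfy $\mu(\gamma)\in\mathcal U$ with $\lVert\mu(\gamma)\rVert$ bounded below by the word length. Kobayashi's criterion then applies, since $d(\mu(\gamma),\mathcal C)\ge\eps\lVert\mu(\gamma)\rVert\to\infty$, so $\Gamma$ acts properly.

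For necessity I argue the contrapositive: assume some $w$ has $B^+\subseteq wA_H$, so $\mathfrak b^+\subseteq\mathcal C$, and let $\Gamma$ be discrete and not virtually abelian. After replacing $G$ by the Zariski closure of $\Gamma$ and adjusting $H$ accordingly, one may assume $\Gamma$ Zariski-dense; by Benoist's limit-cone theorem its limit cone $\mathcal L_\Gamma$ is convex with nonempty interior, and being $\iota$-invariant it contains $\tfrac12(X+\iota X)$ for any interior $X$, hence a nonzero point $Y\in\mathfrak a^\iota\cap\mathfrak a^+=\mathfrak b^+\subseteq\mathcal C$. It remains to convert this into a sequence $\gamma_n\to\infty$ with $d(\mu(\gamma_n),\mathcal C)$ bounded, contradicting Kobayashi. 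This last upgrade---from ``the limit cone meets $\mathcal C$'', a statement about directions, to ``$\mu(\gamma_n)$ stays within bounded distance of $\mathcal C$''---is the main obstacle: density of Jordan-projection directions alone will not land points exactly on the measure-zero cone $\mathcal C$, so one needs the genuinely quantitative regularity of Cartan projections of Zariski-dense groups (comparison of $\mu(\gamma^n)$ with $n\lambda(\gamma)$ up to bounded error, and additivity estimates for $\mu$ on products), exploiting the self-opposed direction so that $\mu(\gamma_n)\approx\mu(\gamma_n^{-1})$ keeps the projections near the fixed space $\mathfrak a^\iota$. I expect this quantitative transversality, rather than the soft convex geometry, to be where the real work lies.
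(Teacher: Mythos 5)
Your sufficiency argument follows the paper's architecture: an $\iota$-invariant cone around a regular self-opposed direction avoiding $\mu(H)$, a Schottky/ping-pong construction with quantitative control of the Cartan projections of words (the paper's Chapters 6--7), and a properness criterion reducing everything to~$A$. One caveat: the criterion you attribute to~\cite{Kob1} --- properness of a \emph{discrete} $\Gamma$ on~$G/H$ read off from $\mu(\Gamma)$ staying away from~$\mu(H)$ --- is not Kobayashi's theorem (which concerns two reductive subgroups) but is precisely Theorem~\ref{sec:5.2} of this paper, one of its main new ingredients; it cannot be taken as an external black box.

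The genuine gap is in the necessity direction, and you identify it yourself without closing it. Deducing from ``the limit cone of~$\Gamma$ contains a ray in $\mathfrak{b}^+\subseteq\mathcal{C}$'' that $d(\mu(\gamma_n),\mathcal{C})$ stays bounded along an infinite sequence is exactly the hard step, and nothing in the proposal supplies it: a limit cone is a statement about asymptotic directions and is compatible with $d(\mu(\gamma),\mathbb{R}_+Y)\to\infty$ (sublinearly) for every ray $\mathbb{R}_+Y$ it contains. Two further problems: (i)~replacing $G$ by the Zariski closure of~$\Gamma$ destroys the hypothesis --- $B^+$, $\iota$ and $\mu$ all change, and the closure need not be reductive (e.g.\ a discrete Heisenberg lattice is not virtually abelian but has unipotent Zariski closure, where the limit-cone theorem says nothing); (ii)~the limit-cone theorem is itself a later and harder result than what is being proved. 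The paper's route is entirely different and elementary: for $f,g\in\Gamma$ it compares the growth of $\|\rho_i(g^pfg^{-p})\|$ and $\|\rho_i(g^pf^{-1}g^{-p})\|$ in finitely many representations (Lemma~\ref{sec:3.2}, a statement about iterates of the linear map $f\mapsto g_1fg_2$ on~$\End(V)$), concluding that $\mu(g^pfg^{-p})\cdot\iota(\mu(g^pfg^{-p}))^{-1}$ stays in a compact set, hence $\mu(g^pfg^{-p})\in B^+M'_f$ for all~$p$; properness of $(\Gamma,B^+)$ then forces $\{g^pfg^{-p}\}_{p\ge 1}$ to be finite, so some power of~$g$ centralizes~$f$, and Noetherianity plus Schur's theorem on torsion linear groups yields that $\Gamma$ is virtually abelian. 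You would need either to reproduce this conjugation trick or to genuinely prove the quantitative statement you flag as ``where the real work lies''.
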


\begin{example_global}
\label{example_1.1.1}
Here are thus some homogeneous spaces for which such a subgroup~$\Gamma$ does not exist:
\begin{itemize}
\item $\SL(n, \mathbb{R})/(\SL(p, \mathbb{R}) \times \SL(n-p, \mathbb{R}))$ where~$1 \leq p < n$ and $p(n-p)$~is even;
\item $\SL(2p, \mathbb{R})/\SL(p, \mathbb{R})$, $\SL(2p, \mathbb{R})/\SO(p,p)$ and~$\SL(2p+1,\mathbb{R})/\SO(p, p+1)$ where ${p \geq 1}$;
\item $\SO(p+1,q)/\SO(p,q)$ when $p \geq q$ or when $p = q-1$ is even;
\item $G_\mathbb{C}/H_\mathbb{C}$ where $G_\mathbb{C}$~is a complex simple Lie group and $H_\mathbb{C}$~is the set of fixed points of a complex involution of~$G_\mathbb{C}$, except for ${\SO(4n, \mathbb{C})/(\SO(p, \mathbb{C}) \times \SO(4n-p, \mathbb{C}))}$ for $n \geq 2$ and $p$~odd.
\end{itemize}
\end{example_global}

\begin{example_global}
\label{example_1.1.2}
Here are now some homogeneous spaces for which such a subgroup~$\Gamma$ does exist:
\begin{itemize}
\item $\SL(n, \mathbb{R})/(\SL(p, \mathbb{R}) \times \SL(n-p, \mathbb{R}))$ where~$1 \leq p < n$ and $p(n-p)$~is odd;
\item $\SL(n, \mathbb{R})/\SO(p, n-p)$ where $1 \leq p < \lfloor \frac{n}{2} \rfloor$;
\item $\SO(p+1,q)/\SO(p,q)$ when $0 \leq p \leq q-2$ or when $p = q-1$ is odd;
\item ${\SO(4n, \mathbb{C})/(\SO(p, \mathbb{C}) \times \SO(4n-p, \mathbb{C}))}$ for $n \geq 2$ and $p$~odd.
\end{itemize}
\end{example_global}

\subsection{Compact quotients}
\label{sec:1.2}

We say that a homogeneous space~$G/H$ admits a compact quotient if there exists a discrete subgroup~$\Gamma$ of~$G$ acting properly on~$G/H$ and such that the quotient $\Gamma \backslash G / H$ is compact.

Determining whether a given homogeneous space admits a compact quotient is a question for which only partial answers are known (see \cite{Ku}, \cite{Kob1}, \cite{K-O},  \cite{Kob2}, \cite{B-L1} and~\cite{Zi}).

\begin{corollary_global}
\label{corollary_1.2.1}
Keep the same notations, and assume that $G/H$ is not compact and that for a suitable choice of~$\Sigma^+$, $A_H$ contains~$B^+$.

Then $G/H$ does not have a compact quotient.

In particular, none of the homogeneous spaces from Example~\ref{example_1.1.1} have a compact quotient.
\end{corollary_global}

Among these examples, the simplest homogeneous space for which the answer was not already known is $\SL(3, \mathbb{R})/\SL(2, \mathbb{R})$ (see for example Question~3 from the introduction to~\cite{Zi}). Very recently and independently, G. Margulis also proved that the example~$\SL(2n+1,\mathbb{R})/\SL(2n,\mathbb{R})$ has no compact quotient (personal communication).

\begin{corollary_global}
\label{corollary_1.2.2}
Let $G_\mathbb{C}$~be a complex simple Lie group, $\sigma$~a~complex involution of~$G_\mathbb{C}$ and $H_\mathbb{C} := \setsuch{g \in G_\mathbb{C}}{\sigma(g) = g}$. Then the symmetric space~$G_\mathbb{C}/H_\mathbb{C}$ has no compact quotient, except possibly for the case where $G_\mathbb{C}/H_\mathbb{C} = \SO(4n, \mathbb{C})/\SO(4n-1, \mathbb{C})$ for $n \geq 2$.
\end{corollary_global}

For previous results going in this direction, see (\cite{Kob2}~1.9).

~

Here is a more geometric way to state Corollary~\ref{corollary_1.2.1} for the homogeneous space $S^{p, q} := \SO(p+1,q)/\SO(p,q)$.

\begin{corollary_global}
\label{corollary_1.2.3}
No complete compact pseudoriemannian manifold~$V$ of signature~$(p,q)$ with constant sectional curvature~$+1$ exists for $p = 2n$ and~$q = 2n+1$ with $n \geq 1$.
\end{corollary_global}
Indeed such a manifold~$V$ would be a compact quotient of~$S^{p,q}$.

Furthermore it is known (\cite{C-M}, \cite{Wo}, \cite{Ku}) that no such manifolds exist when~$p \geq q$ (the Calabi-Markus phenomenon) or when $pq$~is odd (because of the Gauss-Bonnet formula).

When $p = 1$ and~$q = 2n$ (resp. when $p = 3$ and~$q = 4n$) with~$n \geq 1$, there is an abundant supply of such manifolds~$V$ as the group~$\Unit(1,n)$ (resp. $\Sp(1,n)$) acts properly and transitively on~$S^{p,q}$.

~

Let us now describe the four main steps in the proof of the theorem and of its corollaries.

\subsection{The Cartan projection of~$\Gamma$ {\normalsize \itshape (see Chapter~\ref{sec:3})}}
\label{sec:1.3}

Let $K$~be a maximal compact subgroup of~$G$. We have the Cartan decomposition $G = K A^+ K$ and the Cartan projection $\mu: G \to A^+$: for~$g \in G$, $\mu(g)$~is the unique element of~$K g K \cap A^+$ (see \cite{He}~Ch.~9). For example if $G = \SL(n, \mathbb{R})$, we may take $K = \SO(n)$ and $A^+ = \setsuch{\operatorname{diag}(\sigma_1, \ldots, \sigma_n) \in G}{\sigma_1 \geq \cdots \geq \sigma_n > 0}$; we then have $\mu(g) = \operatorname{diag}(\sigma_1(g), \ldots, \sigma_n(g))$, where $\sigma_i(g)^2$~is the $i$-th eigenvalue of ${}^t g g$.

The first step consists in studying the set~$\mu(\Gamma)$.

\begin{proposition*}
Let $\Gamma$~be a discrete subgroup of~$G$ that is not virtually abelian. Then there exists a compact subset~$M$ of~$A$ such that the set $\mu(\Gamma) \cap B^+ M$ is infinite. In particular, for every closed subgroup~$H'$ of~$G$ containing~$B^+$, $\Gamma$~does not act properly on~$G/H'$.
\end{proposition*}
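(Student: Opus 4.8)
The plan is to split the statement into a soft geometric reduction, a dynamical construction of the required group elements, and a contraction estimate which is the technical heart; the final (``in particular'') clause then follows formally from Kobayashi's properness criterion.

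\textbf{Reduction.} Fix the Euclidean norm on~$A$ coming from the Killing form, so that the Weyl group, and in particular~$\iota$, acts by isometries and $A^+$ is a closed convex cone with $\iota A^+ = A^+$ and $B^+ = A^+ \cap \mathrm{Fix}(\iota)$. From the very definition of~$\iota$ one has $\mu(g^{-1}) = \iota(\mu(g))$ for all~$g$, since $g^{-1} \in K\mu(g)^{-1}K$ and $\iota(\mu(g))$ is the unique element of~$A^+$ conjugate to~$\mu(g)^{-1}$. For $a \in A^+$, the orthogonal projection of~$a$ onto~$\mathrm{Fix}(\iota)$ equals $\frac{1}{2}(a + \iota a)$, which lies in~$A^+$ by convexity and $\iota$-invariance, hence in~$B^+$; this gives $d(a, B^+) = \frac{1}{2}\|a - \iota a\|$. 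Taking $M$ to be a closed ball in~$A$ of radius~$R$, one has $\mu(\gamma) \in B^+ M \Leftrightarrow d(\mu(\gamma), B^+) \leq R$, and since each fiber $\Gamma \cap KaK$ is finite, it suffices to exhibit \emph{infinitely many} $\gamma \in \Gamma$ with $\|\mu(\gamma) - \mu(\gamma^{-1})\| = 2\,d(\mu(\gamma), B^+)$ bounded by a constant independent of~$\gamma$.

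\textbf{Construction.} I would produce these elements as $\gamma_n := g^n h g^{-n}$ for a well-chosen pair $g, h \in \Gamma$. Using that $\Gamma$ is not virtually abelian, together with the Tits alternative and discreteness, one finds $g \in \Gamma$ that is loxodromic and regular, with attracting and repelling fixed flags $\xi^\pm$ in~$G/P$ ($P$ a minimal parabolic), and $h \in \Gamma$ in general position relative to the pair~$(\xi^+, \xi^-)$: were no such pair available, all elements of~$\Gamma$ would stabilize a common configuration of transverse flags, which would confine~$\Gamma$ to a virtually abelian shape. Extracting this pair cleanly (getting genuine regularity of~$g$ by passing to powers or products, and the transversality of~$h$) is a delicate but secondary point.

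\textbf{Key estimate (main obstacle).} The heart of the proof is to show that $d(\mu(\gamma_n), B^+)$ stays \emph{bounded} in~$n$. Writing $o \in G/K$ for the base point, the Cartan projection $\mu(\gamma_n)$ is the $A^+$-valued distance from $g^{-n}o$ to $h\,g^{-n}o$, while $\mu(\gamma_n^{-1})$ is that from $g^{-n}o$ to $h^{-1}g^{-n}o$; conjugation by~$g^n$ pushes the computation to the point $g^{-n}o$, which escapes toward~$\xi^-$. The top of the singular spectrum of~$\gamma_n$ is then governed by the $g^n$-expansion along~$\xi^+$ and the bottom by the $g^{-n}$-contraction, and these two ends are interchanged by~$\iota$; the bounded generic factor~$h$ perturbs each singular value only by~$O(1)$. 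Thus $\mu(\gamma_n) = n v^* + O(1)$ with $v^* \in B^+$, and, crucially, the antisymmetric part $\mu(\gamma_n) - \iota\mu(\gamma_n)$ remains bounded (in the model $G = \SL(n,\mathbb{R})$ with $g$ diagonal, the entries of~$\gamma_n$ are $h_{ij}e^{n(a_i - a_j)}$, and their singular values pair up reciprocally up to bounded factors, the middle of the spectrum staying~$O(1)$). The difficulty is precisely to upgrade ``leading term in~$B^+$'' to ``bounded deviation from~$B^+$'' in arbitrary rank, under the correct transversality hypotheses on~$h$; this is where careful contraction estimates for~$\mu$ under conjugation are needed. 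Once this is in place, the~$\gamma_n$ are pairwise distinct for large~$n$ and all lie within bounded distance of~$B^+$, proving the first assertion.

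\textbf{Conclusion of the ``in particular''.} If $H'$ is closed and contains~$B^+$, then $B^+ \subseteq \mu(H')$, since each $b \in B^+ \subseteq A^+ \cap H'$ satisfies $\mu(b) = b \in \mu(H')$. Hence the infinitely many~$\gamma_n$ satisfy $\mu(\gamma_n) \in B^+ M \subseteq \mu(H')\,M$. By Kobayashi's properness criterion (\cite{Kob1}), a proper action of the discrete group~$\Gamma$ on~$G/H'$ would force $\{\gamma \in \Gamma : \mu(\gamma) \in \mu(H')\,M\}$ to be finite for every compact~$M$; this contradiction shows that $\Gamma$ does not act properly on~$G/H'$.
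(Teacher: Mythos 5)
Your skeleton is the right one---the paper also produces the required elements as conjugates $g^{p} f g^{-p}$ and also reduces everything to showing that $\mu(\gamma)\cdot\iota(\mu(\gamma))^{-1}$ stays in a fixed compact set---but the proposal has a genuine gap exactly where you flag ``the main obstacle'': the boundedness of the deviation of $\mu(g^{n}hg^{-n})$ from $B^+$ is asserted and motivated by the diagonal $\SL(n,\mathbb{R})$ picture, not proved. The paper's way around this is worth internalizing because it is far more elementary than the flag-transversality/contraction route you sketch. For each of the $r$ fundamental representations $\rho_i$ one considers the linear operator $\phi(f)=\rho_i(g)\,f\,\rho_i(g)^{-1}$ on $\End(V_i)$ and its generalized eigenspace filtration; for all $f$ outside a proper Zariski-closed subset, $\|\phi^{p}(f)\|$ is asymptotic to $A_f\,p^{s}\lambda^{p}$ with the same pair $(\lambda,s)$, so for two generic $f,f'$ the ratio $\|\rho_i(g^{p}fg^{-p})\|\cdot\|\rho_i(g^{p}f'g^{-p})\|^{-1}$ is bounded in $p$. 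Applying this to the pair $(f,f^{-1})$, using $\mu(x^{-1})=\iota(\mu(x))$ and the comparison $|\omega_i(\mu(x))|\asymp\|\rho_i(x)\|$ from \S\ref{sec:2.4}, one gets that $\mu(g^{p}fg^{-p})\cdot\iota(\mu(g^{p}fg^{-p}))^{-1}$ is bounded, with \emph{no} regularity hypothesis on $g$. This matters for your Construction step too: a discrete non virtually abelian $\Gamma$ need not be Zariski-dense in $G$ and need not contain any $\mathbb{R}$-regular loxodromic element of $G$ (think of a free group sitting inside a small or reducibly embedded subgroup), so the pair $(g,h)$ with transverse attracting and repelling full flags that you require may simply not exist; the paper needs only an arbitrary $g\in\Gamma$ and a Zariski-generic $f\in\Gamma$.

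There is a second, smaller gap in passing from ``the estimate holds for generic $f$'' to ``infinitely many distinct elements of $\mu(\Gamma)$ near $B^+$''. Your one-line justification (``all elements would stabilize a common configuration of transverse flags, which would confine $\Gamma$ to a virtually abelian shape'') is not an argument. In the paper this is where the actual group theory happens: if for every $g$ and every generic $f$ the set $\{g^{p}fg^{-p}\}$ were finite, then some power of each $g$ would centralize each such $f$; by Noetherianity finitely many such centralizers cut out the center $Z_\Gamma$, so some power of every $g$ is central, $\Gamma/Z_\Gamma$ is a torsion linear group, and Schur's theorem then forces $\Gamma$ to be virtually abelian. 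Your reduction of ``$\mu(\gamma)\in B^+M$'' to ``$\mu(\gamma)\cdot\iota(\mu(\gamma))^{-1}$ bounded'' is correct over $\mathbb{R}$, and your final ``in particular'' step is fine---only the easy direction of the properness criterion (essentially the Cartan decomposition together with Lemma~\ref{lemma_3.1.2}) is needed there, so the appeal to Kobayashi is harmless overkill.
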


We explicitly construct infinitely many elements in~$\mu(\Gamma) \cap B^+ M$: we choose $f$ and~$g$ in~$\Gamma$ in a suitable way and we take the elements $\mu(g^p f g^{-p})$, for $p \geq 1$. To check that these elements work, we estimate the norms of their images in a sufficient number of representations of~$G$.

The ``only if'' part of Theorem~\ref{sec:1.1} is of course a consequence of this proposition.

\subsection{Actions of nilpotent groups {\normalsize \itshape (see Chapter~\ref{sec:4})}}
\label{sec:1.4}

Corollary~\ref{corollary_1.2.1} follows from this proposition and from the following proposition which shows that virtually abelian groups~$\Gamma$ do not provide compact quotients either.

\begin{proposition*}
Let $N$~be a nilpotent subgroup of~$G$. Then the quotient $N \backslash G / H$~is not quasicompact.
\end{proposition*}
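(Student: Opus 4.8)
The plan is to exploit the tension between the \emph{exponential} volume growth of the semisimple group~$G$ and the \emph{polynomial} growth of the nilpotent group~$N$, using the Cartan projection~$\mu$ to make this comparison insensitive to whether the $N$-action on~$G/H$ is proper. First I would reduce to the case where $N$ is closed: the closure of a nilpotent subgroup is again nilpotent, and the canonical map $N\backslash G/H \to \overline{N}\backslash G/H$ is a continuous surjection, so it suffices to rule out quasicompactness for~$\overline{N}$. I would also use that $G/H$ is noncompact (the relevant case). Suppose for contradiction that $N\backslash G/H$ is quasicompact; then there is a symmetric compact set~$C\subseteq G$ with $G = NCH$.

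Next I would record the coarse behaviour of~$\mu$ on each factor. Set $L := \sup_{c\in C}\lVert\mu(c)\rVert<\infty$ and use the subadditivity $\lVert\mu(g_1 g_2)-\mu(g_1)\rVert\le\lVert\mu(g_2)\rVert$, which follows from submultiplicativity of operator norms in a faithful representation. The decisive input is the structure of a closed nilpotent~$N$: its Zariski closure is, up to isogeny, a product $T_c\,T_s\,U$ of a compact torus, a split torus and a unipotent group, which I would conjugate so that $T_c\subseteq K$, $T_s\subseteq A$ and $U$ lies in a maximal unipotent subgroup. Then $\mu$ is bounded on~$T_c$, essentially linear on~$T_s$ with image a cone of dimension at most~$\rank_\mathbb{R}(G)$, and only \emph{logarithmically} large on~$U$. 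Thus $N$ has polynomial growth (Milnor--Wolf) and $\mu$ is Lipschitz for its word metric, so $\mu(N)$ is coarsely concentrated, whereas the Haar volume of a ball $\lbrace g : \lVert\mu(g)\rVert\le R\rbrace$ grows exponentially in~$R$, with rate governed by the sum~$2\rho$ of the positive roots. The goal is to turn this into the statement that $\mu(N)+\mathcal{B}_{L}$, together with the subcone~$\mu(H)$ coming from reductivity of~$H$, cannot coarsely cover~$A^+$.

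The main obstacle is precisely the non-properness tolerated by the word \emph{quasicompact}: if the $N$-action on~$G/H$ is not proper, then infinitely many elements of~$N$ carry a fixed compact subset of~$G/H$ into a bounded region, so the naive count of $N$-translates needed to cover a large ball in~$G/H$ is infinite and the elementary volume comparison collapses. This is exactly the phenomenon the paper is built around, and it is why one must argue through~$\mu$ rather than through the orbit geometry: the Cartan projection converts the overlapping, non-proper orbit picture back into a covering problem in~$A^+$, where the concentration of~$\mu(N)$ (linear in the split directions, logarithmically thin in the unipotent directions) is incompatible with the exponential density of~$G$. An alternative route worth keeping in reserve is to use that the amenable group~$N$ fixes a probability measure~$\nu$ on a flag variety~$G/P$; the identity $G=HCN$ then forces the $G$-orbit of~$\nu$ in the space of probability measures to agree with the $H$-orbit of a compact set, which is impossible since a proper reductive~$H$ cannot push limiting Dirac masses onto every point of~$G/P$ while~$G$ can. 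The delicate point on that route is to handle a degenerate~$\nu$ supported on the $N$-fixed locus, which would require an induction on a smaller flag variety.
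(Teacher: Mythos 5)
There is a genuine gap, and it sits exactly at the point you flag as ``the main obstacle'' without resolving it. Your plan needs the implication: $G=NCH$ $\Rightarrow$ $A^+$ is coarsely covered by $\mu(N)+\mu(C)+\mu(H)$. But the Cartan projection is only \emph{sub}additive on products of unbounded sets; the two-sided estimate $\mu(LgL)\subset\mu(g)M$ (the paper's Proposition 5.1, feeding Theorem 5.2) is available only when the factor being perturbed is compact. For a triple product $nch$ with both $n$ and $h$ unbounded there is no lower bound on $\mu(nch)$ in terms of $\mu(n)$ and $\mu(h)$ (large singular values can cancel), so writing $a=nch$ for $a\in A^+$ yields no covering statement in $A^+$. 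This is why the paper does not run the non-quasicompactness argument through Chapter 5 at all.

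Even if the covering were granted, the criterion you extract from it is too weak to prove the proposition, because the asserted concentration of $\mu(N)$ is false. For $N=U$ a maximal unipotent subgroup (nilpotent!), Kostant's decomposition $G=KUK$ --- which the paper itself invokes in Corollary 4.1(b) --- shows that $\mu(U)$ is coarsely all of $A^+$: polynomial growth of $N$ does not make $\mu(N)$ thin, because $\mu$ compresses exponential word-length scales logarithmically. Likewise, for $N=A$ and $H$ a reductive subgroup of full $k$-rank (e.g.\ $H=A$), one has $\mu(N)=\mu(H)=A^+$, the covering holds trivially, and yet $A\backslash G/A$ is \emph{not} quasicompact --- this is precisely the hard case ``$\mathbf{N}=\mathbf{A}$'' in Section 4.4 of the paper, which requires the special element $v_0=e^Y$ and the Jacobson--Morozov lemma. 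The paper's actual route is structural rather than metric: replace $H$ by a cocompact solvable subgroup $A'U'$ of $AU$ (Lemma 4.2.2), reduce via standard parabolics to semisimple $k$-rank one (Section 4.3), and there prove that a convergent product $v_nu_na_n$ (resp.\ $b_nv_0u_na_n$) forces $u_n$ to stay bounded; the remark that this last lemma already fails for $\mathbf{SL}(3,k)$ is a strong indication that no soft growth or covering argument of the kind you propose can succeed. Your fallback via an $N$-invariant measure on $G/P$ is not developed enough to assess, and you yourself note it stalls on the degenerate case.
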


The proof of this proposition is based on reduction to the case of real rank~$1$.

\subsection{Properness criterion {\normalsize \itshape (see Chapter~\ref{sec:5})}}
\label{sec:1.5}

Let $H_1$, $H_2$~be two closed subgroups of~$G$. We then prove a criterion for properness of the action of~$H_1$ on~$G/H_2$. This criterion depends only on the subsets $\mu(H_1)$ and~$\mu(H_2)$ and on the commutative group~$A$. It generalizes two known criteria, one due to Kobayashi (\cite{Kob1}) when $H_1$ and~$H_2$ are reductive Lie subgroups and the other due to Friedland (\cite{Fr}) when $G = \SL(n, \mathbb{R})$ and~$H_2 = \SL(p, \mathbb{R}) \times I_{n-p}$. Here it is:

\begin{proposition*}
$H_1$~acts properly on~$G/H_2$ if and only if for every compact subset~$M$ of~$A$, the set $\mu(H_1) \cap \mu(H_2) M$ is compact.
\end{proposition*}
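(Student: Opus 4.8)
The plan is to translate properness into a statement about the double-coset neighborhoods $CH_2C \subseteq G$ and then transport everything through the Cartan projection~$\mu$. First I would record the standard reformulation: $H_1$ acts properly on $G/H_2$ if and only if, for every compact symmetric subset~$C$ of~$G$, the ``return set'' $H_1 \cap CH_2C$ is relatively compact. Indeed, every compact subset of $G/H_2$ is the image $\pi(C)$ of a compact $C \subseteq G$ under the projection $\pi \colon G \to G/H_2$, and $h\,\pi(C) \cap \pi(C) \neq \emptyset$ holds precisely when $h \in CH_2C^{-1}$; replacing~$C$ by $C \cup C^{-1}$ gives the stated symmetric form. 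Two further ingredients will be used: first, that~$\mu$ is a proper map, since $\mu^{-1}(N) = KNK$ is compact whenever $N \subseteq A^+$ is compact; and second, the key coarse-compatibility statement, namely that for every compact $C \subseteq G$ there is a compact $M_C \subseteq A$ with $\mu(c_1 g c_2) \in \mu(g)\,M_C$ for all $g \in G$ and all $c_1, c_2 \in C$ (the product taken in the abelian group~$A$).

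Granting this compatibility, the implication ``Cartan condition $\Rightarrow$ properness'' is immediate. If $h_1 \in H_1 \cap CH_2C$, write $h_1 = c_1 h_2 c_2$ with $c_1, c_2 \in C$ and $h_2 \in H_2$; then $\mu(h_1) \in \mu(h_2)\,M_C \subseteq \mu(H_2)\,M_C$, so $\mu(H_1 \cap CH_2C) \subseteq \mu(H_1) \cap \mu(H_2)\,M_C$, which is compact by hypothesis. By properness of~$\mu$, the set $H_1 \cap CH_2C$ is contained in the compact set $\mu^{-1}\bigl(\mu(H_1)\cap\mu(H_2)M_C\bigr)$, so it is relatively compact and the action is proper.

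Conversely, for ``properness $\Rightarrow$ Cartan condition'', fix a compact $M \subseteq A$ and consider $a = \mu(h_1) = \mu(h_2)\,m$ with $h_1 \in H_1$, $h_2 \in H_2$, $m \in M$. Writing the Cartan decompositions $h_1 = k_1\,\mu(h_1)\,k_1'$ and $h_2 = l_1\,\mu(h_2)\,l_1'$ in $KA^+K$, a direct substitution gives $h_1 = (k_1 l_1^{-1})\, h_2\, \bigl((l_1')^{-1} m\, k_1'\bigr)$, so that $h_1 \in CH_2C$ for the fixed compact set $C := K \cup (KMK)$. Hence $\mu(H_1)\cap\mu(H_2)M \subseteq \mu(H_1 \cap CH_2C)$; properness makes $H_1 \cap CH_2C$ relatively compact, and continuity of~$\mu$ makes its image bounded, which yields the claimed compactness.

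The main obstacle is the coarse-compatibility statement, which I would prove representation-theoretically. Choose finitely many irreducible representations $(\rho_j, V_j)$ of~$G$, each with a $K$-invariant norm, whose highest restricted weights $\chi_j \in \mathfrak{a}^*$ span~$\mathfrak{a}^*$; then $\log \|\rho_j(g)\| = \chi_j(\log \mu(g))$. Sub-multiplicativity of the operator norm gives $\bigl|\log\|\rho_j(c_1 g c_2)\| - \log\|\rho_j(g)\|\bigr| \leq \log\|\rho_j(c_1)\| + \log\|\rho_j(c_2)\|$, whose right-hand side is bounded as $c_1, c_2$ range over the compact~$C$. Since the $\chi_j$ span~$\mathfrak{a}^*$, these finitely many uniform bounds control $\log\mu(c_1 g c_2) - \log\mu(g) \in \mathfrak{a}$ uniformly in~$g$, producing the compact set~$M_C$. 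The only delicate point is the existence of enough representations with spanning highest weights, which holds for semisimple~$G$ by the theory of fundamental representations; over a general local field~$k$ the same estimates on~$\mu$ are available and the structure of the argument is unchanged.
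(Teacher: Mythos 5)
Your argument is correct and follows essentially the same route as the paper: the heart of both is the estimate that for compact $C\subset G$ there is a compact $M_C\subset A$ with $\mu(CgC)\subset\mu(g)M_C$ (the paper's Proposition~\ref{sec:5.1}), proved identically via finitely many irreducible representations with independent highest weights and submultiplicativity of the norm, after which the converse direction is the paper's verbatim argument and your direct Cartan-decomposition substitution for the forward direction is just an unpacking of the paper's appeal to Lemmas \ref{lemma_3.1.1} and~\ref{lemma_3.1.2}. The only point you leave implicit is that $\mu(H_1)\cap\mu(H_2)M$ is closed (so that boundedness gives compactness), which follows since $\mu$ is proper, hence a closed map, and $H_1$, $H_2$ are closed.
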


\begin{example*}
A discrete subgroup~$\Gamma$ of~$\SL(2p, \mathbb{R})$ acts properly on~$\SL(2p, \mathbb{R})/\Sp(p, \mathbb{R})$ if and only if for every $R > 0$, the set ${\Gamma_R := \setsuch{g \in \Gamma}{\frac{1}{R} \leq \sigma_i(g) \sigma_{2p+1-i}(g) \leq R,\; \forall i = 1, \ldots, p}}$ is finite.
\end{example*}

The main idea of the proof is to estimate, as in~\ref{sec:1.3}, the Cartan projection~$\mu(g)$ of an element $g \in G$ using the norms of the images of~$g$ in a sufficient number of representations of~$G$.

\subsection{Construction of free groups {\normalsize \itshape (see Chapter~\ref{sec:7})}}
\label{sec:1.6}

In the last step, we construct Zariski-dense free subgroups of~$G$ using ideas from \cite{Ti2}, \cite{Mar1}, \cite{G-M} and~\cite{B-L2}.

We denote by $\log$ the logarithm map that identifies~$A$ with its Lie algebra~$\mathfrak{a}$. We say that a subset~$\Omega$ of~$A^+$ is a convex cone if $\log(\Omega)$ is a convex cone in~$\mathfrak{a}$. For example $A^+$ and~$B^+$ are convex cones in~$A$.

\begin{proposition*}
Let $\Omega$~be a nonempty open convex cone in~$A^+$ that is stable by the opposition involution. Then there exists in~$G$ a discrete subgroup~$\Gamma$ that is free on two generators, Zariski-dense in~$G$ and such that $\mu(\Gamma)$ is contained in $\Omega \cup \{e\}$.
\end{proposition*}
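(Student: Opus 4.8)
The plan is to realize $\Gamma$ as a ping-pong group $\langle a,b\rangle$ generated by two high powers of a loxodromic element placed in general position, and to control $\mu(\Gamma)$ through a near-additivity property of the Cartan projection. First I would fix a direction. Since $\Omega$ is a nonempty open convex cone in $A^+$ that is stable under $\iota$, its image $\log\Omega\subseteq\mathfrak{a}$ is an open $\iota$-invariant convex cone; I choose an $\iota$-invariant vector $v$ in its interior, which exists because for any $u\in\log\Omega$ the average $\tfrac12(u+\iota u)$ again lies in $\log\Omega$ by $\iota$-stability and convexity. Such a $v$ is regular, i.e.\ interior to $\log(A^+)$. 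I then pick a loxodromic ($\mathbb{R}$-regular) element $g_0\in G$ whose Jordan projection $\lambda(g_0):=\lim_n\tfrac1n\log\mu(g_0^n)$ is a positive multiple of $v$; such elements form a Zariski-dense subset of $G$. Because the Jordan projection of $g_0^{-1}$ equals $\iota\lambda(g_0)=\lambda(g_0)$, the repelling dynamics of $g_0$ also point into $\log\Omega$ — this is precisely where the $\iota$-stability hypothesis is used, since $\log\mu(g^{-1})=\iota\log\mu(g)$.

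Next I set $a=g_0^{N}$ and $b=h\,g_0^{N}h^{-1}$ for a generic $h\in G$ and a large integer $N$. Choosing $h$ off a suitable proper Zariski-closed subset, I arrange that the attracting and repelling flags of $a$ and $b$ are pairwise in general position and that $\langle a,b\rangle$ is Zariski-dense in $G$. For $N$ large the elements $a^{\pm1},b^{\pm1}$ act with strong North--South dynamics on each fundamental representation of $G$, so a standard ping-pong argument on the product of the associated projective spaces shows that $\langle a,b\rangle$ is free on the generators $a,b$.

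The heart of the argument is the control of $\mu$ on reduced words. The key estimate is a uniform near-additivity of the Cartan projection: there is a constant $C$, depending only on the transversality of the flags above, such that for every reduced word $w=s_1\cdots s_k$ with each $s_i\in\{a^{\pm1},b^{\pm1}\}$,
\[
\Bigl\|\log\mu(w)-\textstyle\sum_{i=1}^{k}\log\mu(s_i)\Bigr\|\le C,
\]
with $C$ independent of $k$. I would prove this as in the first step (Section~\ref{sec:1.3}): in each fundamental representation $(\rho_\chi,V_\chi)$ with highest weight $\chi$ one has $\langle\chi,\log\mu(g)\rangle=\log\|\rho_\chi(g)\|$ for a suitable norm, and in good relative position the product $\rho_\chi(s_1)\cdots\rho_\chi(s_k)$ acts on the highest-weight line almost as the product of the individual leading singular values; summing over the fundamental weights yields the bound. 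Granting this, each $\log\mu(s_i)$ lies in $\log\Omega$: for $N$ large $\log\mu(a)=\log\mu(b)=\log\mu(g_0^N)$ points in a direction arbitrarily close to $v$ since $\tfrac1N\log\mu(g_0^N)\to\lambda(g_0)\in\mathbb{R}_{>0}v$, and $\log\mu(a^{-1})=\iota\log\mu(a)\in\log\Omega$ by $\iota$-stability. Fix a smaller open convex cone $\Omega'$ with $v$ in its interior and $\overline{\Omega'}\setminus\{0\}\subseteq\log\Omega$. For $N$ large every $\log\mu(s_i)$ lies in $\Omega'$ at angular distance at least some $\theta_0>0$ from $\partial\Omega'$ and of norm at least $\rho N$ for some $\rho>0$. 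As $\Omega'$ is a convex cone, the sum $\Sigma:=\sum_i\log\mu(s_i)$ again lies in $\Omega'$ with the same angular separation and norm at least $\rho N$, so its Euclidean distance to $\partial\Omega'$ is at least $(\sin\theta_0)\rho N$. Choosing $N$ with $(\sin\theta_0)\rho N>C$, the near-additivity bound gives $\log\mu(w)\in\Omega'\subseteq\log\Omega$, i.e.\ $\mu(w)\in\Omega$, uniformly in $w$; together with $\mu(e)=e$ this yields $\mu(\Gamma)\subseteq\Omega\cup\{e\}$.

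The main obstacle I anticipate is making the near-additivity estimate genuinely uniform in the word length while simultaneously securing freeness, Zariski density, and transversality. One must check that the ``good position'' neighborhoods defining the ping-pong table are preserved by all words with a single universal constant $C$, which forces the proximality in every fundamental representation to be quantitatively strong — hence the passage to a large power $N$ — and one must verify that loxodromicity, general position, and Zariski density can be arranged at once, each being a nonempty Zariski-open condition.
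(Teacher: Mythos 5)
Your strategy is essentially the paper's (Sections~\ref{sec:6}--\ref{sec:7}): conjugates of large powers of elements of~$A^{++}$ placed in general position, ping-pong in the $r$ fundamental representations of Lemma~\ref{sec:2.3}, and control of~$\mu$ through the norms $\|\rho_i(w)\|$ via Lemma~\ref{sec:2.4}. The one step that is wrong as stated is the pivotal estimate: the near-additivity
\[
\Bigl\|\log\mu(w)-\sum_{i=1}^{k}\log\mu(s_i)\Bigr\|\le C
\]
with $C$ \emph{independent of the word length} $k$ is false. Each juxtaposition of two letters loses a definite amount (a transversality defect), and these losses accumulate along the word. Already in $\SL(2,\mathbb{R})$, for $w=(ab)^n$ one has $\log\|w\|=n\log\lambda_1(ab)+O(1)$ while $\sum_i\log\|s_i\|=n\bigl(\log\|a\|+\log\|b\|\bigr)$, and $\log\lambda_1(ab)<\log\|a\|+\log\|b\|$ strictly for a generic configuration of axes (a gap that persists when $a,b$ are replaced by their $N$-th powers); so the defect grows linearly in~$n$. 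The correct statement --- the one proved in Proposition~\ref{sec:6.4} and Proposition~\ref{sec:7.3} --- is that the error lies in $l\cdot M_\eps$ for a fixed compact set~$M_\eps$, where $l$ is the number of letters (more precisely, of blocks $g_j^{n_j}$: powers of a single letter cost nothing since $\lambda_1(g^n)=\lambda_1(g)^n$, which is why the paper compares with the Jordan projections $\lambda(g_j)$ rather than with the Cartan projections of the letters).

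Fortunately your concluding step survives this correction, and the repair is exactly Corollary~\ref{sec:7.3}: one writes $\log\mu(w)\in\sum_{j}\bigl(N\lambda(g_j)+M''\bigr)$ with $M''$ a fixed compact convex set containing~$0$ (absorbing one copy of the error per letter), observes that for $N$ large each summand $N\lambda(g_j)+M''$ lies in the open convex cone~$\Omega^\bullet$, and concludes by convexity of the cone that the sum does too. In your language: the margin $(\sin\theta_0)\|\Sigma\|$ grows like $kN$ while the true error grows like $Ck$, so taking $N$ large still works --- but you must carry the factor $k$ in the error rather than discard it. Two smaller points: to guarantee that $v=\tfrac12(u+\iota u)$ is regular you should take $u$ regular in $\log\Omega$ (possible since $A^{++}$ is dense in $A^+$ and $\iota$ preserves $A^{++}$); and the uniform estimate is proved in the paper only for \emph{very reduced} (cyclically reduced) words, general reduced words being handled by multiplying by one bounded letter and invoking Proposition~\ref{sec:5.1} --- a reduction your ping-pong setup should also make explicit.
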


G. Margulis told me that he was also aware of this proposition (personal communication).

The group~$\Gamma$ we construct is ``$\eps$-Schottky'', which means that its image in a sufficient number of representations~$V$ of~$G$ is ``$\eps$-Schottky on~$\mathbb{P}(V)$''. Chapter~\ref{sec:6} is dedicated to defining and studying linear groups that are ``$\eps$-Schottky on~$\mathbb{P}(V)$''.

The ``if'' part of Theorem~\ref{sec:1.1} is a consequence of Propositions \ref{sec:1.5} and~\ref{sec:1.6}.

\subsection{}

The proof of Theorem~\ref{sec:1.1} and of Propositions \ref{sec:1.3}, \ref{sec:1.5} and~\ref{sec:1.6} remains valid without any significant changes on any local field~$k$ (see respectively Theorems \ref{sec:7.5}, \ref{sec:3.3}, \ref{sec:5.2} and~\ref{sec:7.4}).

The proof of Proposition~\ref{sec:1.4} and consequently also that of Corollary~\ref{sec:1} remains valid without any changes on any local field of characteristic~$0$ (see respectively Corollaries \ref{sec:4.1} and~\ref{sec:7.6}). Its adaptation to a local field of nonzero characteristic is also possible, but will not be discussed in this paper.

~

These results were announced in~\cite{Be}.

\section{Preliminaries}
\label{sec:2}

\subsection{Local fields}
\label{sec:2.1}

Let $k$~be a local field, \ie either $\mathbb{R}$ or $\mathbb{C}$ or a finite extension of~$\mathbb{Q}_p$ or of~$\mathbb{F}_p[T^{-1}, T]$ for some integer prime~$p$. Let $|.|$~be a continuous absolute value on~$k$.

When $k = \mathbb{R}$ or~$\mathbb{C}$, we set $k^o := (0, \infty)$, $k^+ := [1, \infty)$ and $k^{++} := (1, \infty)$.

When $k$~is non-Archimedean, we call $\mathcal{O}$ the ring of integers of~$k$, $\mathcal{M}$~the maximal ideal of~$\mathcal{O}$ and we choose a uniformizer, \ie an element~$\pi$ of~$\mathcal{M}^{-1}$ which is not in~$\mathcal{O}$. We then set $k^o := \setsuch{\pi^n}{n \in \mathbb{Z}}$, $k^+ := \setsuch{\pi^n}{n \geq 0}$ and~$k^{++} := \setsuch{\pi^n}{n \geq 1}$.

Let $V$ be a finite-dimensional vector space over~$k$. To every basis $v_1, \ldots, v_n$ of~$V$, we associate norms on~$V$ and on~$\End(V)$ defined, for every $v = \sum_{1 \leq i \leq n} x_i v_i$ in~$V$ and for every $g$ in~$\End(V)$, by
\[\|v\| := \sup_{1 \leq i \leq n} |x_i|
\quad\text{and}\quad
\|g\| := \sup_{v \in V,\; \|v\| = 1} \|g \cdot v\|.\]
Of course two different bases of~$V$ give rise to equivalent norms.

\subsection{Cartan decomposition}
\label{sec:2.2}

For every $k$-group $\mathbf{G}$, we denote by~$G$ or by~$\mathbf{G}_k$ the set of its $k$-points and by~$\mathfrak{g}$ its Lie algebra.

Let $\mathbf{G}$ be a semisimple $k$-group. For example $\mathbf{G} = \mathbf{SL}(n, k)$, $\mathbf{Sp}(n, k)$ or the $\mathbf{Spin}$ group of a nondegenerate quadratic form (if $\characteristic k \neq 2$).

Let $\mathbf{A}$ be a maximal $k$-split torus of~$\mathbf{G}$ and (in accordance with our conventions) $A = \mathbf{A}_k$. The dimension~$r$ of~$\mathbf{A}$ is by definition the $k$-rank of~$\mathbf{G}$: $r = \rank_k(\mathbf{G})$. We denote by $X^*(\mathbf{A})$ the set of characters of~$\mathbf{A}$ (this is a free $\mathbb{Z}$-module of rank~$r$) and we set $E := X^*(\mathbf{A}) \otimes_\mathbb{Z} \mathbb{R}$. We denote by $\Sigma = \Sigma(\mathbf{A}, \mathbf{G})$ the set of roots of~$\mathbf{A}$ in~$\mathbf{G}$, \ie the nontrivial weights of~$\mathbf{A}$ in the adjoint representation of the group~$\mathbf{G}$, also called $k$-roots or restricted roots. Then $\Sigma$~is a root system in~$E$ (\cite{B-T1}~\S~5). We choose a system of positive roots~$\Sigma^+$ and we set
\begin{align*}
A^o &:= \setsuch{a \in A}{\forall \chi \in X^*(\mathbf{A}),\; \chi(a) \in k^o}; \\
A^+ &:= \setsuch{a \in A^o}{\forall \chi \in \Sigma^+,\; \chi(a) \in k^+}; \\
A^{++} &:= \setsuch{a \in A^o}{\forall \chi \in \Sigma^+,\; \chi(a) \in k^{++}}.
\end{align*}

Let $N$~be the normalizer of~$A$ in~$G$, $L$~be the centralizer of~$A$ in~$G$ and $W := N/L$ the small Weyl group of~$G$: it can be identified with the Weyl group of the root system~$\Sigma$. The subset~$A^+$ is called the positive Weyl chamber. We have the equality $A^o = \bigcup_{w \in W} w A^+$.

We now assume~$\mathbf{G}$ to be simply connected; this assumption is innocuous, as we can reduce the problem to this case by standard methods (see \cite{Mar2} I.1.5.5 and~I.2.3.1). There exists a maximal compact subgroup~$K$ of~$G$ such that $N = (N \cap K) \cdot A$. We then have the equality $G = K A^+ K$, called the \emph{Cartan decomposition} of~$G$. It follows that for every~$g$ in~$G$, there exists an element~$\mu(g)$ in~$A^+$ such that $g$~is in $K \mu(g) K$. This element $\mu(g)$ is unique. We call \emph{Cartan projection} this map $\mu: G \to A^+$. It is a continuous and proper map. For all of this, we refer to (\cite{He}~9.1.1) in the Archimedean case and to (\cite{Mac}~2.6.11) in the non-Archimedean case.

Let $w_0$ be the ``longest'' element of the Weyl group relative to~$A^+$: it is the unique element of~$W$ such that for every~$a$ in~$A^+$, we have $w_0(a^{-1})$ in~$A^+$. The map $\iota: A^+ \to A^+$ given by
\[\iota(a) = w_0(a^{-1})\]
is called the \emph{opposition involution}. We then have the formula, for every~$g$ in~$G$:
\[\mu(g^{-1}) = \iota(\mu(g)).\]
Finally let $B^+ := \setsuch{a \in A^+}{\iota(a) = a}$ be the set of fixed points of~$\iota$.

\begin{example*}~
\begin{itemize}
\item If $G = \SL(n, k)$ and $k = \mathbb{R}$ or~$\mathbb{C}$, we may take $K = \setsuch{g \in G}{{}^t\overline{g}g = 1}$ and ${A^+ = \setsuch{
\begin{pmatrix}
\sigma_1 & & 0 \\
& \ddots & \\
0 & & \sigma_n
\end{pmatrix}
\in G}{\forall i,\; \sigma_i \in \mathbb{R} \text{ and } \sigma_1 \geq \cdots \geq \sigma_n > 0}}$.
\item If $G = \SL(n, k)$ and $k$ is non-Archimedean, we may take $K = \SL(n, \mathcal{O})$ and ${A^+ = \setsuch{
\begin{pmatrix}
\pi^{q_1} & & 0 \\
& \ddots & \\
0 & & \pi^{q_n}
\end{pmatrix}
\in G}{\forall i,\; q_i \in \mathbb{Z} \text{ and } q_1 \geq \cdots \geq q_n}}$.
\item In both cases, for $g$ in~$G$, we denote by~$\sigma_i(g)$ the diagonal coefficients of~$\mu(g)$: these are the \emph{singular values} of~$g$. When $k = \mathbb{R}$ or~$\mathbb{C}$, these are the eigenvalues of~$({}^t\overline{g}g)^\frac{1}{2}$.
\end{itemize}
\end{example*}

\subsection{Representations of~$\mathbf{G}$}
\label{sec:2.3}

Though the reminders given in this paragraph are valid on any infinite field~$k$, we keep the notations from~\ref{sec:2.2}. Let $\rho$~be a representation of~$G$ on a finite-dimensional $k$-vector space~$V$. More precisely, $\rho$~is a $k$-morphism of $k$-groups $\rho: \mathbf{G} \to \mathbf{GL}(V)$. For $\chi \in X^*(\mathbf{A})$, we denote by $V_\chi := \setsuch{v \in V}{\forall a \in A,\; \rho(a)v = \chi(a)v}$ the corresponding eigenspace. We denote by $\Sigma(\rho) := \setsuch{\chi \in X^*(\mathbf{A})}{V_\chi \neq 0}$ the set of $k$-weights of~$V$. This set is invariant by the action of the Weyl group~$W$ and we have
\[V = \bigoplus_{\chi \in \Sigma(\rho)} V_\chi.\]
We endow $X^*(\mathbf{A})$ with the order given by
\[\chi_1 \leq \chi_2 \;\defiff\; \chi_2 - \chi_1 \in \sum_{\chi \in \Sigma^+} \mathbb{N} \chi.\]

We assume that $\rho$~is irreducible. The set~$\Sigma(\rho)$ then has a unique element~$\lambda$ that is maximal for this order, called the highest $k$-weight of~$V$. When $\mathbf{G}$ is $k$-split, we have $\dim V_\lambda = 1$.

We will need the following well-known lemma.
\begin{lemma*}
There exist $r$ irreducible representations~$\rho_i$ of the group~$G$ on $k$-vector spaces~$V_i$ whose highest $k$-weights $(\omega_i)_{1 \leq i \leq r}$ form a basis of the $\mathbb{R}$-vector space~$E$ and such that ${\dim (V_i)_{\omega_i} = 1}$.
\end{lemma*}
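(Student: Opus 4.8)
The plan is to reduce the lemma to the construction of sufficiently many irreducible $k$-representations whose highest weight space is one-dimensional, organizing everything around the centralizer $\mathbf{L} := \mathbf{Z}_\mathbf{G}(\mathbf{A})$, the Levi factor of a minimal parabolic $k$-subgroup $\mathbf{P} = \mathbf{L} \ltimes \mathbf{U}$ whose unipotent radical is positive with respect to~$\Sigma^+$. The guiding principle is the relative theory of highest weights: if $\rho$ is an irreducible $k$-representation of~$\mathbf{G}$ on~$V$ with highest $k$-weight~$\lambda$, then the highest weight space~$V_\lambda$ is naturally an irreducible representation of~$\mathbf{L}$, on which the central split torus~$\mathbf{A}$ acts through~$\lambda$. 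Consequently $\dim V_\lambda = 1$ exactly when this $\mathbf{L}$-module is one-dimensional, that is, a $k$-rational character of~$\mathbf{L}$; so producing representations with one-dimensional highest weight space amounts to producing characters of~$\mathbf{L}$.

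For the converse, I would construct these representations directly. Given a $k$-dominant character $\tilde\lambda \in X^*(\mathbf{L})$ (nonnegative on the positive coroots), extend it to~$\mathbf{P}$ by letting~$\mathbf{U}$ act trivially, and set $V := H^0(\mathbf{G}/\mathbf{P}^-, \mathcal{L}_{\tilde\lambda})$, the space of global sections of the associated $\mathbf{G}$-linearized line bundle on the projective flag variety~$\mathbf{G}/\mathbf{P}^-$ over~$k$. Evaluation at the base point identifies an extremal weight space with the fiber, namely the one-dimensional $\mathbf{L}$-module~$k_{\tilde\lambda}$; replacing~$V$ by the irreducible constituent through this vector yields an irreducible $k$-representation with highest weight $\lambda := \restr{\tilde\lambda}{\mathbf{A}} \in X^*(\mathbf{A})$ and $\dim V_\lambda = 1$.

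It then suffices to check that the highest weights so obtained span~$E$, for then I may extract a basis $(\omega_i)_{1 \le i \le r}$, each realized by a representation of the above type. Here~$\mathbf{L}$ enters decisively: since~$\mathbf{A}$ is the maximal split central torus of~$\mathbf{L}$, restriction induces an isomorphism $X^*(\mathbf{L}) \otimes_\mathbb{Z} \mathbb{R} \xrightarrow{\sim} X^*(\mathbf{A}) \otimes_\mathbb{Z} \mathbb{R} = E$ (a character of~$\mathbf{L}$ kills $[\mathbf{L}, \mathbf{L}]$ and the anisotropic kernel, and $\mathbf{A} \cap [\mathbf{L}, \mathbf{L}]$ is finite). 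The $k$-dominant characters of~$\mathbf{L}$ are the lattice points of~$X^*(\mathbf{L})$ in the dominant cone of~$E$; as~$\Sigma$ spans~$E$ this cone is full-dimensional, so these characters, and hence their restrictions~$\lambda$, span~$E$ over~$\mathbb{R}$. Choosing $r$ linearly independent ones finishes the argument.

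The main obstacle is the relative highest-weight statement of the first paragraph, that $\dim V_\lambda = 1$ is governed by whether~$\lambda$ lifts to a character of~$\mathbf{L}$; this is exactly the content of Tits's classification of irreducible representations over an arbitrary field, and it is what makes the argument uniform in the non-split case (when~$\mathbf{G}$ is $k$-split one has $\mathbf{L} = \mathbf{A}$, every dominant weight lifts trivially, and the fundamental representations already suffice, as noted in~\ref{sec:2.3}). A cosmetic point is to confirm that the line-bundle construction gives the claimed one-dimensional extremal weight space over~$k$ and not merely over~$\bar k$; alternatively one bypasses the geometry by invoking the existence half of Tits's classification. Finally, I note that the set of multiplicity-one dominant highest weights is a submonoid of~$X^*(\mathbf{A})$: if $V, V'$ have one-dimensional highest weight spaces for $\lambda, \lambda'$, the subrepresentation of $V \otimes V'$ generated by the product of highest weight vectors is irreducible with highest weight $\lambda + \lambda'$ and one-dimensional highest weight space. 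This gives an alternative route to assemble a mere generating family into the required basis.
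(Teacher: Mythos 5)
Your argument is correct, but it takes a genuinely different (and much heavier) route than the paper's. The paper's proof is a two-line reduction: it quotes Tits (\cite{Ti1}~7.2) for irreducible representations $\sigma_i$ on spaces $W_i$ whose highest $k$-weights $\lambda_i$ form a basis of~$E$, with no control on $d_i := \dim (W_i)_{\lambda_i}$, and then takes for $V_i$ the simple subquotient of $\ext^{d_i}(W_i)$ containing the weight $\omega_i := d_i\lambda_i$. Since any decomposition $\omega_i = \mu_1 + \cdots + \mu_{d_i}$ with all $\mu_j \leq \lambda_i$ forces every $\mu_j = \lambda_i$ (the monoid generated by $\Sigma^+$ is salient), the weight $\omega_i$ occurs in $\ext^{d_i}(W_i)$ with multiplicity one, on the line $\ext^{d_i}\bigl((W_i)_{\lambda_i}\bigr)$, and it is the highest weight of the unique simple subquotient containing it; the $\omega_i$ are positive multiples of the $\lambda_i$ and hence still a basis of~$E$. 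You instead re-derive the relevant piece of Tits's relative theory: multiplicity one is equivalent to the $\mathbf{L}$-module $V_\lambda = V^{\mathbf{U}}$ being a character of~$\mathbf{L}$, and you manufacture such representations from dominant characters $\tilde\lambda$ of~$\mathbf{L}$ via sections of line bundles on $\mathbf{G}/\mathbf{P}^-$, then check that these characters restrict to a full-rank subgroup of $X^*(\mathbf{A})$. This is workable --- the points you flag are standard (nonvanishing of $H^0$ for dominant $\tilde\lambda$, and the fact that the $k$-weight space for $\tilde\lambda|_{\mathbf{A}}$ in $H^0$ is the irreducible $\mathbf{L}$-module of highest weight $\tilde\lambda$, hence a line precisely because $\tilde\lambda$ kills the coroots of~$\mathbf{L}$) --- and it buys more, namely a description of which weights are achievable with multiplicity one. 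But for the lemma as stated only a basis of the $\mathbb{R}$-vector space~$E$ is needed, so rescaling by $d_i$ costs nothing, and the exterior-power trick avoids the flag-variety construction and the spanning argument for $X^*(\mathbf{L})$ entirely. Your closing tensor-product observation is in the same elementary spirit as the paper's device and would likewise convert a generating family into the required one.
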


For complete results concerning classification of representations of~$G$, we refer to~\cite{B-T1, B-T2} as well as to~\cite{Ti1}.

\begin{proof}
We choose irreducible representations~$\sigma_i$ of~$G$ on $k$-vector spaces~$W_i$ whose highest $k$-weights $(\lambda_i)_{1 \leq i \leq r}$ form a basis of the $\mathbb{R}$-vector space~$E$ (\cite{Ti1}~7.2). We set $d_i := \dim (W_i)_{\lambda_i}$, $\omega_i := d_i \lambda_i$ and we take for~$V_i$ the simple subquotient of $\ext^{d_i}(W_i)$ having $\omega_i$ as a $k$-weight.
\end{proof}

\begin{example*}
When $G = \SL(n, k)$, we have $r = n-1$ and $V_i = \ext^i (k^n)$, for $1 \leq i \leq n-1$.
\end{example*}

\subsection{Cartan projection and representations of~$\mathbf{G}$}
\label{sec:2.4}

The following lemma is easy and fundamental. In the light of Lemma~\ref{sec:2.3}, it says that, up to a bounded multiplicative constant, computing~$\mu(g)$ is equivalent to computing the norms~$\|\rho_i(g)\|$ for $i = 1, \ldots, r$.

\begin{lemma*}
For every irreducible representation $(V, \rho)$ of~$G$ with highest $k$-weight $\chi$ and for every norm on~$V$, there exists a constant $C_\chi > 0$ such that, for every $g \in G$, we have
\[C_\chi^{-1} \leq \frac{|\chi(\mu(g))|}{\|\rho(g)\|} \leq C_\chi.\]
\end{lemma*}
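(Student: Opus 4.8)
The plan is to prove the two-sided bound by relating $\|\rho(g)\|$ to the largest $k$-weight appearing in $g$, and then using the Cartan decomposition $g = k_1\, \mu(g)\, k_2$ to reduce the computation to the torus. The key observation is that the operator norm $\|\rho(g)\|$ is invariant, up to a bounded factor, under left and right multiplication by the compact group $K$: since $K$ is compact, $\rho(K)$ is a bounded subset of $\GL(V)$, so there is a constant $C_K \geq 1$ with $C_K^{-1}\|\rho(a)\| \leq \|\rho(g)\| \leq C_K \|\rho(a)\|$ whenever $g \in KaK$. Thus it suffices to estimate $\|\rho(a)\|$ for $a = \mu(g) \in A^+$.

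First I would compute $\|\rho(a)\|$ directly on the torus. Write $V = \bigoplus_{\chi \in \Sigma(\rho)} V_\chi$ as in~\ref{sec:2.3}, and fix a norm coming from a basis adapted to this weight decomposition. For $a \in A^+$, the operator $\rho(a)$ acts on $V_\chi$ by the scalar $\chi(a)$, so in this adapted norm $\rho(a)$ is diagonal and $\|\rho(a)\| = \max_{\chi \in \Sigma(\rho)} |\chi(a)|$. Because $a$ lies in the positive Weyl chamber $A^+$, every weight satisfies $\chi \leq \lambda$ in the ordering from~\ref{sec:2.3}, meaning $\lambda - \chi$ is a nonnegative combination of positive roots; evaluating at $a \in A^+$ gives $|\chi(a)| \leq |\lambda(a)|$, so the maximum is attained at the highest weight and $\|\rho(a)\| = |\lambda(a)|$. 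Specializing $\lambda = \chi$ and $a = \mu(g)$, this yields $\|\rho(\mu(g))\| = |\chi(\mu(g))|$ exactly.

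Combining these two steps gives the result: with $C_\chi := C_K$ (possibly enlarged to absorb the equivalence-of-norms constant between the chosen adapted norm and the given norm on $V$, which is bounded since all norms on a finite-dimensional space are equivalent), we obtain
\[
C_\chi^{-1} \leq \frac{\|\rho(g)\|}{|\chi(\mu(g))|} = \frac{\|\rho(g)\|}{\|\rho(\mu(g))\|} \leq C_\chi,
\]
which is exactly the claimed inequality after taking reciprocals. The constant depends only on $\rho$, the norm, and $K$, not on $g$.

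The only delicate point is the non-Archimedean case, where I must check that $\rho(K)$ is genuinely a bounded subset of $\GL(V)$ in the chosen operator norm. This holds because $K$ is compact and $\rho$ is continuous, so $\rho(K)$ is a compact, hence bounded, subset of $\End(V)$; the same applies to $\rho(K^{-1}) = \rho(K)$, giving a uniform two-sided bound. I expect this compactness-of-the-$K$-factor argument, together with verifying that the maximum over weights is attained at the highest weight uniformly over the whole chamber $A^+$, to be the heart of the proof, though both are essentially immediate once set up correctly; there is no real obstacle, which is why the lemma is described as easy.
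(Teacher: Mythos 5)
Your proof is correct and follows essentially the same route as the paper: choose a norm adapted to the weight decomposition so that $\|\rho(a)\| = |\chi(a)|$ for $a \in A^+$ (the paper asserts this directly; you supply the justification that the maximum over weights is attained at the highest weight since $a \in A^+$ makes the positive roots have absolute value at least $1$), then absorb the Cartan factors $k_1, k_2$ using the boundedness of $\rho(K)$, exactly as in the paper's constant $C_\chi = \sup_{k \in K}\|\rho(k)\|^2$.
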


\begin{proof}
We may assume that the chosen norm corresponds to a basis formed by eigenvectors for the action of~$A$ (see~\ref{sec:2.1}) so that, for $a$ in~$A^+$, we have
\[|\chi(a)| = \|\rho(a)\|.\]

Take $a = \mu(g)$ so that $g = k_1 a k_2$ with $k_1$, $k_2$ in~$K$. We then have
\[\frac{|\chi(\mu(g))|}{\|\rho(g)\|} = \frac{\|\rho(a)\|}{\|\rho(k_1 a k_2)\|} \in [C_\chi^{-1}, C_\chi]\]
where $C_\chi = \sup_{k \in K}\|\rho(k)\|^2$. This is what we wanted.
\end{proof}

\section{The Cartan projection of~$\Gamma$}
\label{sec:3}

The goal of this section is to prove Theorem~\ref{sec:3.3} which generalizes Proposition~\ref{sec:1.3}.

\subsection{$H$-proper pairs}
\label{sec:3.1}

Let us start with a few easy observations whose power will become apparent later.

\begin{definition*}
Let $H$~be a locally compact group and $H_1$, $H_2$ two closed subsets of~$H$. We shall say that $(H_1, H_2)$ is $H$-proper if for every compact subset~$L$ of~$H$, the intersection $H_1 \cap L H_2 L$ is compact.
\end{definition*}

\begin{remarks*}~
\begin{enumerate}
\item \label{remark_3.1.1} If $(H_1, H_2)$ is $H$-proper, then $(H_2, H_1)$ is $H$-proper and moreover for any $h_1$, $h_2$ in~$H$, $(h_1 H_1 h_1^{-1}, h_2 H_2 h_2^{-1})$ is $H$-proper.
\item \label{remark_3.1.2} This definition may be generalized as follows: let $E$ and~$X$ be locally compact topological spaces and $a: E \times X \to X$ a continuous map, in other terms $a$~is a continuous family $e \mapsto a_e$ of continuous transformations of~$X$. We shall say that this family is proper if for every compact subset~$L$ of~$X$, the set $E_L := \setsuch{e \in E}{e L \cap L \neq \emptyset}$ is compact. When $E$~is a semigroup and $a$~is an action, we get the usual definition of a proper action.

Saying that $(H_1, H_2)$ is $H$-proper is equivalent to saying that the family of transformations of~$H$
\begin{align*}
(H_1 \times H_2) \times H &\to H \\
((h_1, h_2), h) &\mapsto a_{h_1, h_2}(h) := h_1 h h_2^{-1}
\end{align*}
is proper.
\end{enumerate}
\end{remarks*}

The following lemma is a direct application of the definitions. Its verification is left to the reader.

\begin{lemma}
\label{lemma_3.1.1}
Let $H$ be a locally compact group, $H_1$~a closed subsemigroup of~$H$ and $H_2$~a closed subgroup of~$H$. We have the equivalence:

$(H_1, H_2)$ is $H$-proper $\iff$ $H_1$ acts properly on~$G/H_2$.
\end{lemma}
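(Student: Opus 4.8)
Write $X := H/H_2$ and let $p : H \to X$, $p(h) = h H_2$, be the quotient map; since $H_2$ is closed, $X$ is a locally compact Hausdorff space and $p$ is an open continuous surjection. The plan is to express each of the two conditions in terms of the \emph{return sets} $S_C := \setsuch{h_1 \in H_1}{h_1 C \cap C \neq \emptyset}$ attached to compact subsets $C$ of $X$ — recall that $H_1$ acts properly on $H/H_2$ precisely when each $S_C$ is compact (cf.\ Remark~\ref{remark_3.1.2}), and that each $S_C$ is automatically closed because $C$ is compact and $X$ is Hausdorff — and to match these against the sets $H_1 \cap L H_2 L$ appearing in the definition of $H$-properness. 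The bridge between the two pictures is the elementary identity, valid for any compact $L_0 \subseteq H$ with $C := p(L_0)$,
\[ S_C = H_1 \cap L_0 H_2 L_0^{-1}, \]
which I would check by unwinding $h_1 p(x) = p(y) \iff h_1 x H_2 = y H_2 \iff h_1 \in y H_2 x^{-1}$ for $x, y \in L_0$.

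For the implication ``$(H_1,H_2)$ is $H$-proper $\Rightarrow$ $H_1$ acts properly'', I would take an arbitrary compact $C \subseteq X$ and lift it to a compact $L_0 \subseteq H$ with $p(L_0) = C$. Setting $L := L_0 \cup L_0^{-1}$, which is compact since inversion is a homeomorphism, the identity above gives $S_C = H_1 \cap L_0 H_2 L_0^{-1} \subseteq H_1 \cap L H_2 L$, and the right-hand side is compact by hypothesis. As $S_C$ is closed and contained in a compact set, it is compact, so the action is proper.

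For the converse I would fix a compact $L \subseteq H$ and test properness against $C := p(L \cup L^{-1})$. If $h_1 = \ell_1 h_2 \ell_2 \in H_1 \cap L H_2 L$ with $\ell_1, \ell_2 \in L$ and $h_2 \in H_2$, then $h_1 \cdot p(\ell_2^{-1}) = p(\ell_1 h_2) = p(\ell_1)$, with both $p(\ell_2^{-1})$ and $p(\ell_1)$ lying in $C$; hence $h_1 \in S_C$, so $H_1 \cap L H_2 L \subseteq S_C$. Since $S_C$ is compact by properness and $H_1 \cap L H_2 L$ is closed — here one uses that in a topological group the product of a compact set with a closed set is closed, so $L H_2 L = (L H_2) L$ is closed and meets the closed set $H_1$ — the set $H_1 \cap L H_2 L$ is a closed subset of a compact set, hence compact. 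This is exactly $H$-properness.

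The only genuine inputs beyond bookkeeping are the two topological facts I flagged: that every compact subset of $H/H_2$ is the image of a compact subset of $H$ (which rests on $H$ being locally compact and $p$ being open), and that products of compact with closed sets are closed in a topological group. These are where the local-compactness and closedness hypotheses are actually consumed, and I expect the compact-lifting statement to be the one point deserving care. The argument otherwise uses nothing about $H_1$ being a group, so the fact that $H_1$ is merely a closed subsemigroup causes no difficulty: the relevant notion of a proper semigroup action is precisely the return-set condition recorded in Remark~\ref{remark_3.1.2}.
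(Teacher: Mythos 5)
Your proof is correct and is precisely the routine verification the paper leaves to the reader (the paper offers no argument, calling the lemma ``a direct application of the definitions''). The translation between the return sets $S_C$ and the sets $H_1 \cap L H_2 L$, together with the two standard topological facts you flag (every compact subset of $H/H_2$ lifts to a compact subset of $H$, and the product of a compact set with a closed set is closed in a topological group), is exactly what is needed.
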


\begin{definition*}
Let $H$ be a locally compact group and $H_1$, $H'_1$ two closed subsets of~$H$. We shall say that $H'_1$~is contained in~$H_1$ modulo the compacts of~$H$ if there exists a compact subset~$L$ of~$H$ such that $H'_1 \subset L H_1 L$.
\end{definition*}

\begin{lemma}
\label{lemma_3.1.2}
Let $H$ be a locally compact group, and let $H_1$, $H'_1$, $H_2$,~$H'_2$ be four closed subsets of~$H$ such that $(H_1, H_2)$ is $H$-proper and $H'_j$~is contained in~$H_j$ modulo the compacts of~$H$ for $j = 1, 2$. Then the pair $(H'_1, H'_2)$ is $H$-proper.
\end{lemma}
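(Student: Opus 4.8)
The plan is to verify the defining property of $H$-properness for $(H_1', H_2')$ directly. I fix an arbitrary compact subset $L \subseteq H$ and aim to show that $H_1' \cap L H_2' L$ is compact. By hypothesis there exist compact subsets $L_1, L_2 \subseteq H$ with
\[ H_1' \subseteq L_1 H_1 L_1 \qquad\text{and}\qquad H_2' \subseteq L_2 H_2 L_2. \]
I then introduce the compact set $\tilde L := (L_1^{-1} L L_2) \cup (L_2 L L_1^{-1})$; it is compact because inversion is a homeomorphism and multiplication carries compact sets to compact sets.

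The heart of the argument is to ``strip off'' the compact factors and land inside a set controlled by the $H$-properness of $(H_1, H_2)$. Let $x \in H_1' \cap L H_2' L$. From $x \in H_1'$ I write $x = a h_1 b$ with $a, b \in L_1$ and $h_1 \in H_1$, and from $x \in L H_2' L \subseteq (L L_2) H_2 (L_2 L)$ I write $x = c h_2 d$ with $c \in L L_2$, $d \in L_2 L$ and $h_2 \in H_2$. Eliminating $x$ yields
\[ h_1 = a^{-1} x b^{-1} = (a^{-1} c)\, h_2\, (d b^{-1}), \]
and since $a^{-1} c \in L_1^{-1} L L_2 \subseteq \tilde L$ and $d b^{-1} \in L_2 L L_1^{-1} \subseteq \tilde L$, I conclude that $h_1 \in H_1 \cap \tilde L H_2 \tilde L$. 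Because $(H_1, H_2)$ is $H$-proper, the set $C := H_1 \cap \tilde L H_2 \tilde L$ is compact, whence $x = a h_1 b \in L_1 C L_1$. Thus
\[ H_1' \cap L H_2' L \subseteq L_1 C L_1, \]
and the right-hand side is compact.

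To finish I must check that $H_1' \cap L H_2' L$ is itself closed, so that, being a closed subset of the compact set $L_1 C L_1$, it is compact. Here $H_1'$ is closed by assumption, and $L H_2' L = (L H_2')L$ is closed because in a topological group the product of a compact set and a closed set is closed (if a net $\ell_\alpha f_\alpha$ converges, a subnet of the $\ell_\alpha$ converges and forces the corresponding subnet of the $f_\alpha$ to converge in the closed factor). Consequently the intersection is closed, and the proof is complete.

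This argument is essentially bookkeeping, so I do not anticipate a genuine obstacle; the only point needing a little care is the order in which the compact factors accumulate on the left and on the right, which is exactly what dictates the slightly asymmetric choice of $\tilde L$: one must herd both leftover factors into a \emph{single} compact set so that the $H$-properness of $(H_1, H_2)$ can be invoked with the same compact set appearing on both sides.
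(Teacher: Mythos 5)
Your proof is correct and is exactly the direct verification the paper has in mind when it declares the lemma ``an immediate consequence of the definitions'' and omits the argument: absorb the auxiliary compacts $L_1$, $L_2$ into a single compact set $\tilde L$, invoke the $H$-properness of $(H_1,H_2)$ for $\tilde L$, and conclude that $H_1'\cap LH_2'L$ sits inside a compact set. Your additional check that $H_1'\cap LH_2'L$ is closed (via the fact that the product of a compact set and a closed set in a topological group is closed) is a worthwhile point of care that the paper leaves implicit.
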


This lemma is also an immediate consequence of the definitions. It is the conjunction of this lemma with the Cartan decomposition that explains the Calabi-Markus phenomenon.

\subsection{Growth of~$g_1^p f g_2^p$}
\label{sec:3.2}

Let us use the notations of~\ref{sec:2.2} again. Let $\mathbf{G}$ be a simply-connected semisimple $k$-group, $G := \mathbf{G}_k$ and $\mu: G \to A^+$ a Cartan projection.

\begin{proposition*}
Let $g_1$, $g_2$ be two elements of~$G$ and $F$ a nonempty subset of~$G$. Then there exists a nonempty subset~$F'$ of~$F$ which is Zariski-open in~$F$ and such that for any $f$, $f'$ in~$F'$, there exists a compact subset~$M_{f, f'}$ of~$A$ such that for every $p \geq 1$, we have
\[\mu(g_1^p f g_2^p) \cdot \mu(g_1^p f' g_2^p)^{-1} \in M_{f, f'}.\]
\end{proposition*}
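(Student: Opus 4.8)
The plan is to reduce the statement to an asymptotic estimate on operator norms in a finite family of representations, and then to read off the conclusion from a single Zariski non-vanishing (genericity) condition on $f$.

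First I would translate everything into the language of representations. By Lemma~\ref{sec:2.3} choose irreducible representations $\rho_1,\dots,\rho_r$ whose highest weights $\omega_1,\dots,\omega_r$ form a basis of $E$. A subset of $A$ is relatively compact precisely when every character $\chi\in X^*(\mathbf A)$ is bounded on it, and since the $\omega_i$ form a basis of $E$ it suffices to bound each $|\omega_i|$ both above and below. By Lemma~\ref{sec:2.4}, $|\omega_i(\mu(g))|$ is comparable, up to the constant $C_{\omega_i}$, to $\|\rho_i(g)\|$. Hence the asserted inclusion $\mu(g_1^pfg_2^p)\,\mu(g_1^pf'g_2^p)^{-1}\in M_{f,f'}$ is equivalent to the statement that, for each $i$, the ratio $\|\rho_i(g_1^pfg_2^p)\|/\|\rho_i(g_1^pf'g_2^p)\|$ stays bounded above and below, uniformly in $p$.

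Next, fix $i$ and abbreviate $\alpha:=\rho_i(g_1)$, $\beta:=\rho_i(g_2)$, $V:=V_i$. The heart of the matter is the asymptotics of $\|\alpha^p\,\rho_i(f)\,\beta^p\|$ as $p\to\infty$. I would split $V$ into the $\alpha$-invariant generalized eigenspaces grouped by the absolute value of the eigenvalue, $V=V_+^\alpha\oplus V_<^\alpha$ with $V_+^\alpha$ attached to the largest absolute value, and likewise $V=V_+^\beta\oplus V_<^\beta$ for $\beta$. Writing the operator norm as a bilinear pairing, $\|\alpha^pM\beta^p\|=\sup_{\phi,v}\langle{}^{t}\alpha^p\phi,\,M\beta^pv\rangle$, one uses that ${}^{t}\alpha^p$ on $V^*$ concentrates, after normalisation by $\|\alpha^p\|$, on the annihilator of $V_<^\alpha$, while $\beta^p$ concentrates on $V_+^\beta$. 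A standard analysis of the powers of a matrix via its Jordan form (in the non-Archimedean case the unipotent parts stay bounded; in the Archimedean case they contribute polynomial factors common to all $f$) then yields, for $M=\rho_i(f)$,
\[
\|\alpha^p\,\rho_i(f)\,\beta^p\|\ \asymp_f\ \|\alpha^p\|\cdot\|\beta^p\|
\qquad\text{provided}\qquad \rho_i(f)\,V_+^\beta\not\subseteq V_<^\alpha ,
\]
the implied constants depending on $f$ but the function $\|\alpha^p\|\cdot\|\beta^p\|$ being independent of $f$. The upper bound is automatic; the lower bound is the substantive point, obtained by choosing test directions in $V_+^\beta$ on which $\rho_i(f)$ has nonzero $V_+^\alpha$-component, which is exactly where the non-vanishing hypothesis enters.

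Finally I would organise the genericity. The condition $\rho_i(f)\,V_+^\beta\not\subseteq V_<^\alpha$ is the non-vanishing of certain matrix entries of $\rho_i(f)$, hence Zariski-open in $f$, and it holds somewhere on $\mathbf G$ because $\rho_i$ is irreducible, so $\rho_i(G)$ cannot carry the nonzero space $V_+^\beta$ into the proper subspace $V_<^\alpha$. Taking $F'$ to be the set of $f\in F$ on which, for every $i$, the growth of $\|\rho_i(g_1^p\,\cdot\,g_2^p)\|$ is the maximal profile occurring on $F$ gives a nonempty subset, Zariski-open in $F$, on which the governing function $\phi_i(p)$ is the same for all $f$; then for $f,f'\in F'$ each ratio above is bounded, as required. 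I expect the main obstacle to be the asymptotic estimate of the previous paragraph: making the concentration of ${}^{t}\alpha^p$ and of $\beta^p$ on the extreme eigenspaces precise enough to extract a two-sided bound uniform in $p$ (controlling Jordan blocks and, in the Archimedean case, the polynomial corrections, and handling the degenerate case where the top block vanishes identically on $F$ by descending to the first non-vanishing block). A secondary technical point is guaranteeing that one nonempty Zariski-open $F'$ serves all $r$ representations simultaneously when $\overline F$ is reducible, which needs a short constructibility argument on the irreducible components of $\overline F$.
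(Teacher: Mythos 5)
Your reduction of the statement to two-sided bounds on the ratios $\|\rho_i(g_1^pfg_2^p)\|/\|\rho_i(g_1^pf'g_2^p)\|$ for the fundamental representations of Lemma~\ref{sec:2.3}, via Lemma~\ref{sec:2.4}, is exactly the paper's first step. The gap is in the core estimate. The transversality condition $\rho_i(f)V^\beta_+\not\subseteq V^\alpha_<$ is too coarse: it says nothing about the Jordan (polynomial) part of the growth, and two elements satisfying it can have unbounded ratio. Concretely, take $V=k^2$, $\alpha=\bigl(\begin{smallmatrix}1&1\\0&1\end{smallmatrix}\bigr)$, $\beta=1$, $f=\bigl(\begin{smallmatrix}0&1\\0&0\end{smallmatrix}\bigr)$, $f'=\bigl(\begin{smallmatrix}0&0\\0&1\end{smallmatrix}\bigr)$: here $V^\alpha_<=0$ and $V^\beta_+=V$, so both $f$ and $f'$ satisfy your condition, yet $\alpha^pf=f$ has constant norm while $\|\alpha^pf'\|\sim p$, so the ratio is unbounded (and your claimed asymptotic $\|\alpha^pM\beta^p\|\asymp\|\alpha^p\|\,\|\beta^p\|$ fails for $f$). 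You flag ``controlling Jordan blocks'' and ``descending to the first non-vanishing block'' as technical points to be dealt with later, but this is the entire content of the lemma: one must stratify $f$ by the exact growth profile $p^r\lambda^p$, show that lying above a given profile is a Zariski-closed condition, and show that the top stratum meeting $F$ is a nonempty Zariski-open subset of $F$. None of this follows from the transversality condition, and your final definition of $F'$ (``the set of $f$ achieving the maximal profile on $F$'') presupposes it. A further slip: the genericity must be non-vacuous on the given set $F$, which may be tiny, so irreducibility of $\rho_i$ (which only gives non-vacuousness on $G$) is not the relevant point.

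The paper's device resolves all of this at once and is the idea your argument is missing: regard $f\mapsto g_1fg_2$ as a single linear operator $\phi$ on $\End(V)$, pass to a finite extension of $k$ so that its eigenvalues lie in $k$, and set
\[W^{\lambda,r}:=\sum_{\substack{(z,s)\in k\times\mathbb{N}\\ (|z|,s)<(\lambda,r)}}\Ker\bigl((\phi-z)^s\bigr),\]
with $(0,\infty)\times\mathbb{N}$ ordered lexicographically. Choosing $(\lambda,r)$ maximal with $F\not\subseteq W^{\lambda,r}$ and $F':=F\setminus W^{\lambda,r}$, every $f\in F'$ satisfies $\|\phi^p(f)\|\sim A_f\,p^r\lambda^p$ with the \emph{same} $(\lambda,r)$, and $F'$ is Zariski-open in $F$ for free, being the complement of a linear subspace. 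Your decomposition of $V$ by the separate spectra of $\alpha$ and $\beta$ cannot easily recover this, because the relevant eigenvalues are the products of eigenvalues of $\alpha$ and $\beta$, distinct pairs of which can share a modulus or combine Jordan depths; the operator $\phi$ on $\End(V)$ does precisely this bookkeeping. (The paper also disposes of your worry about reducible $\overline F$ by simply assuming $F$ Zariski-connected, each component of $F$ being open and closed in $F$.)
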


\begin{remark*}
In this statement, the phrase ``Zariski-open in~$F$'' means open with respect to the topology induced on~$F$ by the Zariski topology of~$G$. In other terms, $F'$ is a nonempty subset of~$F$ whose complement is defined by polynomial equations.
\end{remark*}

\begin{proof}
We may assume that $F$~is Zariski-connected so that the intersection of two nonempty Zariski-open subsets of~$F$ is still a nonempty Zariski-open subset of~$F$.

The proposition is then a consequence of Lemma~\ref{sec:2.4} and of the following elementary lemma applied to the subsets $\rho_i(F)$ where the~$\rho_i$ are the representations of~$G$ introduced in Lemma~\ref{sec:2.3}.
\end{proof}

Let $V$ be a $k$-vector space of dimension~$d$. Let us take the notations of~\ref{sec:2.1}.

\begin{lemma*}
Let $g_1, g_2$ be elements of~$\GL(V)$ and let~$F$ be a nonempty subset of $\End(V) \setminus \{0\}$. Then there exists a nonempty subset~$F'$ of~$F$, Zariski-open in~$F$ and such that for any $f$, $f'$ in~$F'$, there exists a constant $C_{f, f'} > 1$ such that for every $p \geq 1$, we have
\[C_{f, f'}^{-1} \leq \|g_1^p f g_2^p\| \|g_1^p f' g_2^p\|^{-1} \leq C_{f, f'}.\]
\end{lemma*}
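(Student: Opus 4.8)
The plan is to reformulate the two–sided multiplication as a single linear operator and then to control its growth. Writing $W := \End(V)$ and letting $T \in \GL(W)$ be the commuting product of left and right multiplication, $T(f) := g_1 f g_2$, we have $g_1^p f g_2^p = T^p(f)$, so the statement reduces to the following: there is a nonempty Zariski-open $F' \subseteq F$ on which the quantity $\|T^p f\|$ is, uniformly in $p \geq 1$, comparable up to multiplicative constants to a single function of $p$. Granting this, for $f, f' \in F'$ both $\|T^p f\|$ and $\|T^p f'\|$ are squeezed between fixed multiples of that function, so their ratio lies in a compact subinterval of $(0,\infty)$, which is exactly the asserted inequality with $C_{f,f'}$ the larger of the two comparison constants.

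First I would determine the growth of $p \mapsto \|T^p f\|$ for a fixed nonzero $f$. After extending scalars to a splitting field (which changes norms only by bounded factors, all norms in finite dimension being equivalent) I decompose $W = \bigoplus_j W_j$ into generalized eigenspaces of $T$, with $T|_{W_j} = \alpha_j(\mathrm{id} + N_j)$ and $N_j$ nilpotent. Writing $f = \sum_j f_j$ and using $T^p f_j = \alpha_j^p \sum_{m} \binom{p}{m} N_j^m f_j$, the vectors $N_j^m f_j$ are linearly independent for $0 \le m \le m_j(f_j) := \max\{m : N_j^m f_j \neq 0\}$, so the inner norm is controlled coordinatewise by the binomials $\binom{p}{m}$. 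In the Archimedean case this yields $\|T^p f_j\| \asymp |\alpha_j|^p p^{m_j(f_j)}$; in the non-Archimedean case the bounds $|\binom{p}{m}| \le 1$ and $\binom{p}{0}=1$ force $\|T^p f_j\| \asymp |\alpha_j|^p$, with no polynomial factor. Since the $W_j$ are in direct sum there is no cancellation between blocks, whence $\|T^p f\| \asymp \rho(f)^p p^{m(f)}$ (resp. $\rho(f)^p$), where $\rho(f)$ is the largest $|\alpha_j|$ over blocks in which $f$ is nonzero and $m(f)$ the largest nilpotent level among those top blocks. All comparisons hold uniformly for $p \ge 1$, the finitely many initial values being absorbed into the constants.

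Attach to each nonzero $f$ its growth type $\delta(f) := (\rho(f), m(f))$, ordered lexicographically; only finitely many values occur. Let $\delta^* = (t^*, m^*)$ be the maximal growth type attained on $F$, and set $U := \{f \in W : \|T^p f\| = o((t^*)^p p^{m^*})\}$. Little-$o$ is stable under addition and scaling, so $U$ is a linear subspace, and being a $k$-subspace of $\End(V)$ it is Zariski-closed and defined over $k$. By the growth estimate, $f \in U$ exactly when $\delta(f) < \delta^*$; hence $F' := F \setminus U$ is Zariski-open in $F$, and it is nonempty because $\delta^*$ is attained. Every $f \in F'$ then has $\delta(f) = \delta^*$, so $\|T^p f\| \asymp (t^*)^p p^{m^*}$, and the ratio bound holds for all pairs in $F'$ at once, a single $F'$ serving all $f, f'$ as the statement demands.

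The main obstacle is the uniform growth estimate of the second step. The delicate point is the non-Archimedean case, where one must check that the unipotent parts neither create growth nor let the norm collapse: this is precisely where the bounds $|\binom{p}{m}| \le 1$, the normalization $\binom{p}{0}=1$, and the linear independence of the $N_j^m f_j$ combine to give two-sided bounds independent of $p$. A second, more routine subtlety is ensuring that $U$ is genuinely defined over $k$ so that $F'$ is Zariski-open in $F$; this is immediate from the intrinsic little-$o$ description of $U$, which makes no reference to the splitting field used only to compute the growth rates.
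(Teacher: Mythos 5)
Your argument is correct and essentially identical to the paper's: both view $f \mapsto g_1 f g_2$ as a single endomorphism $\phi$ of $\End(V)$, split its eigenvalues over a finite extension of $k$, order the pairs (modulus of eigenvalue, nilpotency index) lexicographically, and take $F'$ to be the complement in $F$ of the $k$-subspace of elements of submaximal growth type (your $U$ is the paper's $W^{\lambda,r}$). The only point to tidy is that in the non-Archimedean case the correct comparison function is $(t^*)^p$ with no polynomial factor, so the little-$o$ threshold defining $U$ must be taken with respect to that function rather than $(t^*)^p p^{m^*}$ --- which is exactly what your parenthetical ``(resp.\ $\rho(f)^p$)'' already indicates.
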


We suggest to the reader to prove this lemma in the particular case where $g_1$ and~$g_2$ are diagonal matrices before reading the complete proof. 

\begin{proof}
We denote by~$\phi$ the endomorphism of~$\End(V)$ given by $\phi(f) := g_1 f g_2$. Replacing if necessary $k$ by a finite extension, we may assume that the eigenvalues of~$\phi$ are all in~$k$.

We endow the set $(0, \infty) \times \mathbb{N}$ (where $\mathbb{N}$ stands for the set of nonnegative integers) with the lexicographic order:
\[(\lambda, r) \leq (\lambda', r') \;\defiff\; \lambda < \lambda' \;\text{ or }\; (\lambda = \lambda' \text{ and } r \leq r').\]
We introduce, for $\lambda$ in~$(0, \infty)$ and $r$ in~$\mathbb{N}$, the following vector subspace of~$\End(V)$:
\[W^{\lambda, r} := \sum_{\substack{(z, s) \in k \times \mathbb{N} \\ (|z|, s) < (\lambda, r)}} \Ker ((\phi - z)^s).\]
Let $(\lambda, r)$ be the greatest element of~$(0, \infty) \times \mathbb{N}$ such that $F \not\subset W^{\lambda, r}$ and let $F' := F \setminus (F \cap W^{\lambda, r})$. It is clear that for every~$f$ in~$F'$, there exists a constant $A_f > 0$ such that the sequence $p \mapsto \|\phi^p(f)\| = \|g_1^p f g_2^p\|$ is equivalent to~$A_f p^r \lambda^p$. The conclusion follows.
\end{proof}

\subsection{Fixed points of the opposition involution}
\label{sec:3.3}

\begin{theorem*}
Let $k$ be a local field, $\mathbf{G}$~a semisimple $k$-group, $\mathbf{A}$~a maximal $k$-split torus of~$\mathbf{G}$, $A^+$~a positive Weyl chamber, $\iota$~the opposition involution, $B^+ := \setsuch{a \in A^+}{\iota(a) = a}$ and $\Gamma$~a discrete subgroup of~$G := \mathbf{G}_k$. We assume $\characteristic k = 0$ (resp. $\characteristic k \neq 0$).

If the pair $(\Gamma, B^+)$ is $G$-proper then $\Gamma$~is virtually abelian (resp. nilpotent).

In particular, if $H$ is a subgroup of~$G$ containing~$B^+$ and if $\Gamma$~acts properly on~$G/H$ then $\Gamma$~is virtually abelian (resp. nilpotent).
\end{theorem*}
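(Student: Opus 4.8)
The plan is to prove the contrapositive: assuming $\Gamma$ is \emph{not} virtually abelian (resp.\ not nilpotent), I would construct a compact set $M \subset A$ for which $\mu(\Gamma) \cap B^+ M$ is infinite, thereby contradicting $G$-properness of $(\Gamma, B^+)$. The final ``in particular'' clause follows immediately: by Lemma~\ref{lemma_3.1.1}, $\Gamma$ acting properly on $G/H$ means $(\Gamma, H)$ is $G$-proper, and since $B^+ \subset H$, Lemma~\ref{lemma_3.1.2} gives that $(\Gamma, B^+)$ is $G$-proper; so the main assertion applies.

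The heart of the argument is the Proposition of~\ref{sec:3.2}, which I would invoke with $g_1 = g$, $g_2 = g^{-1}$ for a well-chosen $g \in \Gamma$ and $F$ a suitable subset of $\Gamma$. First I would use the hypothesis that $\Gamma$ is not virtually abelian (resp.\ not nilpotent) to locate two elements $f, g \in \Gamma$ in ``general position'' — concretely, a $g$ of infinite order with a genuinely hyperbolic (non-unipotent) component so that conjugation by $g^p$ stretches things, and an $f$ not commuting with the relevant eigenspaces of $g$. Applying the Proposition to the pair $(g, g^{-1})$ and the finite set $F = \{e, f\}$ (or a slightly larger set), I extract the key growth comparison: there is a compact $M_{f,e} \subset A$ with
\[
\mu(g^p f g^{-p}) \cdot \mu(g^p \cdot e \cdot g^{-p})^{-1} \in M_{f,e}
\quad\text{for all } p \ge 1.
\]
Since $\mu(g^p g^{-p}) = \mu(e) = e$, this says $\mu(g^p f g^{-p})$ stays in the \emph{fixed} compact $M_{f,e}$ — but that alone is too strong and signals I should instead compare $\mu(g^p f g^{-p})$ with a \emph{moving} reference. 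The correct move is to compare it against $\mu(g^p f' g^{-p})$ for a second test element, or to observe that the symmetry $\mu(h^{-1}) = \iota(\mu(h))$ forces the elements $h_p := g^p f g^{-p}$ to have $\mu(h_p)$ and $\iota(\mu(h_p)) = \mu(h_p^{-1}) = \mu(g^p f^{-1} g^{-p})$ comparable modulo a compact, which pushes $\mu(h_p)$ toward the $\iota$-fixed locus $B^+$ up to bounded error.

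So the plan's crux is the following chain. Choosing $f$ so that $f$ and $f^{-1}$ are conjugate in a controlled way (e.g.\ $f$ conjugate to $f^{-1}$, achievable by picking $f$ itself an appropriate symmetric or ``balanced'' element of $\Gamma$, which the non-commutativity of $\Gamma$ supplies), I get $\mu(h_p^{-1})$ within a compact factor of $\mu(h_p)$; combined with $\mu(h_p^{-1}) = \iota(\mu(h_p))$, this yields $\iota(\mu(h_p)) \in \mu(h_p)\, M'$ for a fixed compact $M' \subset A$, hence $\mu(h_p) \in B^+ M''$ for some fixed compact $M''$ (using that an element of $A^+$ whose $\iota$-image is boundedly close to it lies boundedly close to the $\iota$-fixed cone $B^+$). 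It then remains to check that the elements $\mu(h_p)$ are \emph{infinite} in number, \ie that $h_p = g^p f g^{-p}$ runs through infinitely many distinct $\mu$-values: here the growth exponent $(\lambda, r)$ from the Lemma of~\ref{sec:3.2} being nontrivial — which is exactly what fails to hold only when $\Gamma$ is virtually abelian (resp.\ nilpotent) — guarantees $\|\rho_i(h_p)\| \to \infty$ along some $i$, so $\mu(h_p)$ escapes every compact set, giving infinitely many elements of $\mu(\Gamma) \cap B^+ M''$. Properness of $\mu$ then contradicts $G$-properness of $(\Gamma, B^+)$.

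The main obstacle I anticipate is the \emph{selection of $f$ and $g$}: one must convert the purely algebraic hypothesis ``not virtually abelian'' (resp.\ ``not nilpotent'') into the existence of a pair realizing a \emph{nonzero} growth exponent $r$ or a dominant eigenvalue $\lambda > 1$ of the conjugation dynamics, \emph{and simultaneously} producing the self-duality $\mu(h_p^{-1}) \approx \mu(h_p)$ modulo compacts that forces $\mu(h_p)$ toward $B^+$. The difference between the characteristic-$0$ and characteristic-$p$ conclusions (virtually abelian vs.\ nilpotent) will enter precisely at this selection step, reflecting which algebraic structure theorem (Tits alternative / Jordan-type decomposition in the relevant characteristic) one may invoke to manufacture $g$ with a suitable hyperbolic part. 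I expect the Zariski-openness in the Proposition of~\ref{sec:3.2} to be what lets me choose $f$ avoiding the degenerate subvarieties where the growth comparison or the symmetry degenerates.
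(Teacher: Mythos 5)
Your analytic mechanism is the same as the paper's: apply the Proposition of~\ref{sec:3.2} with $g_1=g$, $g_2=g^{-1}$, compare $\mu(g^pfg^{-p})$ with $\mu(g^pf^{-1}g^{-p})=\iota(\mu(g^pfg^{-p}))$, and invoke a lemma saying that $a\cdot\iota(a)^{-1}\in M$ forces $a\in B^+M'$; the ``in particular'' clause via Lemmas \ref{lemma_3.1.1} and~\ref{lemma_3.1.2} is also fine. One local correction: your detour through choosing ``$f$ conjugate to $f^{-1}$'' is both unnecessary and ineffective (conjugating $h_p$ by $g^pcg^{-p}$ is not a bounded perturbation). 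The comparison $\mu(h_p)\approx\mu(h_p^{-1})$ comes solely from taking $f'=f^{-1}$ in the Proposition, which only requires $f^{-1}$ to also lie in the Zariski-open set $F'$ --- arranged by replacing $F'$ with $F'\cap (F')^{-1}$, nonempty because $\Gamma$ may be assumed Zariski-connected.

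The genuine gap is the group-theoretic half. Your contrapositive framing obliges you to \emph{produce}, from ``$\Gamma$ is not virtually abelian,'' a pair $f,g$ with $f,f^{-1}\in\Gamma'$ and $\{g^pfg^{-p}\}_{p\ge1}$ infinite. You assert that a trivial growth exponent for all such pairs ``is exactly what fails to hold only when $\Gamma$ is virtually abelian,'' but that equivalence is the hard content of the theorem and you offer no argument for it; the gestures toward the Tits alternative and Jordan-type decompositions do not supply one. The paper runs the implication forwards, and this is where the real work sits: assuming $(\Gamma,B^+)$ is $G$-proper, each set $\{g^pfg^{-p}\}$ with $f\in\Gamma'$ lies in $L\,B^+L\cap\Gamma$ for a fixed compact $L$, hence is finite by properness of $\mu$ and discreteness of $\Gamma$, so some power of $g$ centralizes $f$; Noetherianity of centralizers together with the fact that $\Gamma'$ generates the Zariski-connected group $\Gamma$ upgrades this to ``some power of $g$ lies in the center $Z_\Gamma$,'' so $\Gamma/Z_\Gamma$ is a torsion linear group; Schur's theorem in characteristic zero (resp.\ Tits's result in positive characteristic), plus Burnside and the finite generation of discrete nilpotent groups in characteristic zero, then yield that $\Gamma$ is virtually abelian (resp.\ nilpotent). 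This chain --- centralizers, torsion quotient, Schur/Tits --- is entirely missing from your plan, and it is also precisely where the characteristic dichotomy in the conclusion enters, not in the selection of $f$ and $g$ as you predict.
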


\begin{proof}
We may assume $\mathbf{G}$ to be simply connected and $\Gamma$ to be Zariski-connected. We denote by $\mu: G \to A^+$ a Cartan projection. Let $g$~be an element of~$\Gamma$. By the previous proposition, there exists a nonempty subset $\Gamma'$ of~$\Gamma$ which is Zariski-open in~$\Gamma$ and such that for every $f$ in~$\Gamma'$, there exists a compact subset~$M_f$ of~$A$ such that, for every $p \geq 1$,
\[\mu(g^p f g^{-p}) \cdot \mu(g^p f^{-1} g^{-p})^{-1} \in M_f.\]
Hence by the lemma below, there exists a compact subset $M'_f$ in~$A$ such that, for every $p \geq 1$,
\[\mu(g^p f g^{-p}) \in M'_f B^+.\]

By assumption, the set $\mu(\Gamma) \cap M'_f B^+$ is compact. Since $\Gamma$~is discrete and $\mu$~is proper, the set $\setsuch{g^p f g^{-p}}{p \geq 0}$ is finite. Let $Z_f$~be the centralizer of~$f$ in~$\Gamma$ and $Z_\Gamma$~be the center of~$\Gamma$. Hence there exists $p_0 \geq 1$ such that $g^{p_0}$~is in~$Z_f$.

By Noetherianness, there exists a finite subset~$\Gamma_0$ of~$\Gamma$ such that
\[Z_\Gamma = \bigcap_{\gamma \in \Gamma_0} Z_\gamma.\]
Since $\Gamma'$ generates~$\Gamma$, we may assume that $\Gamma_0$ is contained in~$\Gamma'$. Hence there exists $p \geq 1$ such that $g^p$~is in~$Z_\Gamma$.

The group $\Gamma / Z_\Gamma$ is a linear torsion group. The claim below shows that $\Gamma / Z_\Gamma$ contains a finite-index nilpotent subgroup. Since $\Gamma$~is Zariski-connected, $\Gamma$~is nilpotent.

Now if $k$ has zero characteristic, since $\Gamma$~is nilpotent and discrete, $\Gamma$~is finitely generated. But then $\Gamma / Z_\Gamma$ is a finite group. This is what we wanted.
\end{proof}

In this proof, we used the following lemma and claim.
\begin{lemma*}
Let $M$ be a compact subset of~$A$. Then there exists a compact subset~$M'$ of~$A$ such that, for every $a$ in~$A^+$, we have the implication
\[a \cdot \iota(a)^{-1} \in M \implies a \in B^+ M'.\]
\end{lemma*}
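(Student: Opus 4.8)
The plan is to linearize the whole problem by passing from the torus $A$ to the real vector space $\mathfrak{a} := \Hom(X^*(\mathbf{A}), \mathbb{R})$ via the continuous homomorphism $\ell \colon A \to \mathfrak{a}$ sending $a$ to the functional $\chi \mapsto \log|\chi(a)|$. This map has compact kernel (the maximal compact subgroup of $A$) and is proper onto its image, so compact subsets of $A$ correspond to bounded subsets of $\mathfrak{a}$ and conversely; it therefore suffices to prove the additive analogue of the lemma. The opposition involution induces a linear involution on $\mathfrak{a}$, again written $\iota$ and given by $X \mapsto -w_0 X$, which satisfies $\ell(\iota(a)) = \iota(\ell(a))$. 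Writing $X := \ell(a)$, the hypothesis $a\,\iota(a)^{-1} \in M$ becomes $X - \iota(X) \in \ell(M)$, a fixed bounded set, while the conclusion $a \in B^+ M'$ amounts to showing that $X$ stays within a fixed bounded distance of $\ell(B^+)$.

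I would then run a one-line eigenspace computation. Decompose $\mathfrak{a} = \mathfrak{a}^{\iota} \oplus \mathfrak{a}^{-}$ into the $(+1)$- and $(-1)$-eigenspaces of $\iota$ and write $X = X_+ + X_-$ accordingly. Since $X - \iota(X) = 2X_-$, the hypothesis bounds $X_-$ by a constant depending only on $M$. For the other component, the decisive structural facts are that the closed chamber $\mathfrak{a}^+ := \setsuch{X \in \mathfrak{a}}{\langle \chi, X\rangle \geq 0 \ \forall \chi \in \Sigma^+}$ is \emph{convex} and is \emph{stable under $\iota$} (this is exactly the defining property $\iota(A^+) = A^+$ of the opposition involution). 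As $a \in A^+$ gives $X \in \mathfrak{a}^+$ and hence $\iota(X) \in \mathfrak{a}^+$, the average $X_+ = \tfrac12\bigl(X + \iota(X)\bigr)$ lies in $\mathfrak{a}^+$ by convexity; being $\iota$-fixed it lies in $\mathfrak{a}^+ \cap \mathfrak{a}^{\iota}$, which is precisely the linearization $\mathfrak{b}^+$ of $B^+$. Thus $X = X_+ + X_-$ with $X_+ \in \mathfrak{b}^+$ and $X_-$ bounded. This settles the Archimedean case at once, since there $\ell(B^+) = \mathfrak{b}^+$ and one simply lifts $X_+$ to $b \in B^+$, making $\ell(ab^{-1}) = X_-$ bounded and hence $ab^{-1}$ confined to a fixed compact set.

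The one genuine subtlety, which I expect to be the main obstacle, is the non-Archimedean case: there $\ell(A)$ is a lattice, $X_+$ need not lie in it, and one cannot literally halve. The fix is to observe that $\mathfrak{b}^+ = \mathfrak{a}^+ \cap \mathfrak{a}^{\iota}$ is a \emph{rational polyhedral cone} for the full-rank lattice $\Lambda := \ell(A) \cap \mathfrak{a}^{\iota}$ of $\iota$-fixed cocharacters, with $\Lambda \cap \mathfrak{b}^+$ coinciding with $\ell(B^+)$ up to the compact kernel. Choosing finite generators $v_1, \dots, v_m \in \Lambda$ of this cone and writing $X_+ = \sum_i t_i v_i$ with $t_i \geq 0$, the truncation $Y := \sum_i \lfloor t_i \rfloor v_i$ again lies in $\Lambda \cap \mathfrak{b}^+$ and satisfies $\|X_+ - Y\| \leq \sum_i \|v_i\|$, a bound independent of $a$. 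Lifting $Y$ to $b \in B^+$, the difference $\ell(ab^{-1}) = X - Y$ is then uniformly bounded, so $ab^{-1}$ ranges over a fixed compact subset $M'$ of $A$ and $a \in B^+ M'$. It is this coarse-density step, rather than the eigenspace computation, where the rationality of the chamber and the finite generation of the cone are essential; the Archimedean argument of the previous paragraph is the special case $Y = X_+$.
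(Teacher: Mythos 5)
Your proof is correct and follows essentially the same route as the paper's: both arguments rest on the convexity and $\iota$-invariance of the Weyl chamber to show that the symmetrization $\tfrac12(X+\iota(X))$ lands in the linearization of $B^+$, with the Archimedean case disposed of immediately by halving. The only divergence is organizational, in the non-Archimedean step: the paper corrects for the failure of exact square roots by writing $a = c^2 l$ with $l$ in a finite subset $L$ of $A^+$ and taking $b = c\,\iota(c)$, whereas you round $X_+$ down inside the cone $\mathfrak{b}^+$ using finitely many lattice generators --- two equivalent devices, both ultimately resting on the rational polyhedral structure of the chamber (and your version makes explicit the finite-generation fact that the paper merely asserts).
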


\begin{proof}
When $k = \mathbb{R}$, the logarithm map identifies the connected component~$A_e$ of~$A$ to an $\mathbb{R}$-vector space, the involution~$\iota$ to a linear symmetry and $A^+$~to a convex cone invariant by~$\iota$. We may then take $M' := \setsuch{m \in A}{m^2 \in M}$.

The general case is no harder. There exists a finite subset~$L$ of~$A^+$ such that for every~$a$ in~$A^+$, there exists~$l$ in~$L$ and $c$ in~$A^+$ such that $a = c^2 l$. We may then write $a = b m$ where $b := c \cdot \iota(c)$~is in~$B^+$ and $m := l \cdot c \cdot \iota(c)^{-1}$~is in the compact set $M' := \setsuch{m \in A}{m^2 \in M L \iota(L)^{-1}}$.
\end{proof}

\begin{claim*}
Let $k$~be a field, $V$ a finite-dimensional $k$-vector space and $\Gamma$ a torsion subgroup of~$\GL(V)$.
\begin{enumerate}[label=\alph*)]
\item (Schur, see \cite{C-R} p.~258) If $\characteristic(k) = 0$, $\Gamma$~contains a finite-index abelien subgroup whose elements are all semisimple.
\item (\cite{Ti2}~2.8) If $\characteristic(k) \neq 0$, we denote by~$k_a$ the algebraic closure in~$k$ of the prime subfield of~$k$. Then every simple subquotient~$V'$ of~$V$ has a basis in which the coefficients of the elements of~$\Gamma$ lie in~$k_a$. In particular, if $k_a$~is finite (which is the case when $k$~is local), $\Gamma$ has a finite-index nilpotent subgroup whose elements are all unipotent.
\item (Burnside) In both cases, if $\Gamma$~is finitely generated, then $\Gamma$~is finite.
\end{enumerate}
\end{claim*}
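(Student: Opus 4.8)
The plan is to handle the three parts in the order a), b), c), since part c) will follow by combining a) and b) with two elementary facts: a finitely generated torsion abelian group is finite, and a finitely generated torsion nilpotent group is finite (being polycyclic).

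For a), I would first observe that in characteristic zero every torsion element of~$\GL(V)$ is automatically semisimple: if $\gamma^m = 1$, then the minimal polynomial of~$\gamma$ divides $X^m - 1$, which is separable over~$k$, so $\gamma$ is diagonalizable over~$\bar k$. Hence any subgroup of~$\Gamma$ consists of semisimple elements, and the only real content is the existence of a finite-index abelian subgroup. This is the Jordan--Schur theorem: Jordan's theorem bounds, by a constant $J(n)$ depending only on~$n$, the index of an abelian normal subgroup in a \emph{finite} subgroup of~$\GL(n, \mathbb{C})$, and Schur's extension upgrades this to arbitrary torsion subgroups by passing to finitely generated subgroups (whose entries lie in a finitely generated, hence complex-embeddable, subfield). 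I would simply invoke this (\cite{C-R} p.~258).

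For b), the starting observation is again about eigenvalues. A torsion element~$\gamma$ in characteristic~$p$ satisfies $\gamma^m = 1$, so each of its eigenvalues over~$\bar k$ is an $m$-th root of unity, hence algebraic over the prime field and therefore lies in~$k_a$. Consequently the characteristic polynomial of every~$\gamma$ on any subquotient has coefficients in~$k_a$; in particular all traces lie in~$k_a$. Fix a simple subquotient~$V'$ and let~$\Gamma'$ be the image of~$\Gamma$ in~$\GL(V')$; then the enveloping algebra $k[\Gamma'] \subseteq \End(V')$ is a simple $k$-algebra (a primitive finite-dimensional algebra with a faithful simple module). The crux — and the step I expect to be the main obstacle — is the \emph{descent}: because every element of~$\Gamma'$ has characteristic polynomial over~$k_a$, the module~$V'$ admits a $k_a$-form, \ie a basis in which all $\gamma \in \Gamma'$ have entries in~$k_a$. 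The cleanest route passes through the enveloping algebra, showing that the $k_a$-subalgebra generated by~$\Gamma'$ is a $k_a$-form of~$k[\Gamma']$; since in positive characteristic the trace form on a matrix algebra may degenerate, verifying this is genuinely delicate and is the technical content of~\cite{Ti2}~2.8, which I would cite. Granting it, when $k_a$ is finite each $\GL(n_i, k_a)$ (with $n_i = \dim V_i/V_{i-1}$) is a finite group, so the homomorphism $\Gamma \to \prod_i \GL(V_i/V_{i-1})$ attached to a composition series $0 = V_0 \subset \cdots \subset V_m = V$ has finite image. Its kernel~$\Gamma_1$ then has finite index and acts trivially on every graded piece, so in a basis adapted to the filtration it consists of unipotent upper-triangular matrices; such a group is nilpotent, yielding the desired finite-index nilpotent subgroup with unipotent elements.

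For c), I would deduce finiteness directly from a) and b). In characteristic zero, a) provides a finite-index abelian subgroup~$\Gamma_0$, which is finitely generated (finite index in a finitely generated group) and torsion, hence finite, so~$\Gamma$ is finite. In characteristic~$p$, I would use that the realizability over~$k_a$ from b) holds even when~$k_a$ is infinite, together with the remark that a finitely generated subgroup of~$\GL(n, k_a)$ has all entries in a finitely generated subfield of~$k_a$, which is finite because~$k_a$ is algebraic over~$\mathbb{F}_p$. Thus the image of~$\Gamma$ on each simple subquotient is finite, so the map to $\prod_i \GL(V_i/V_{i-1})$ again has finite image; its kernel~$\Gamma_1$ is finitely generated, torsion, and nilpotent (unipotent as above), hence finite, and therefore~$\Gamma$ is finite.
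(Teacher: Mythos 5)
The paper gives no proof of this claim: all three parts are quoted from the literature (Schur via \cite{C-R}, \cite{Ti2}~2.8, Burnside), so there is nothing to compare your argument against except those citations. Your sketch is correct and defers exactly the two genuinely hard steps --- the Jordan--Schur theorem in a) and the descent to $k_a$ in b) --- to the same sources, while the glue you supply (semisimplicity of torsion elements in characteristic~$0$, the passage from finiteness of the images on the simple subquotients to a finite-index unipotent nilpotent kernel, and the deduction of c) from a) and b) plus the finiteness of finitely generated torsion abelian or nilpotent groups) is all sound; deriving c) from a) and b) rather than from Burnside's trace argument is a legitimate alternative route. One small imprecision: the eigenvalues of a torsion element live in $\bar k$ and need not lie in $k_a$ (which is the algebraic closure of the prime field \emph{inside}~$k$); what is true, and what you actually use, is that the coefficients of the characteristic polynomial lie in $k_a$ because they are both in~$k$ and algebraic over the prime field.
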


\section{Action of nilpotent groups}
\label{sec:4}
In this section $k$~is a local field of characteristic zero. The goal of this section is to prove Theorem~\ref{sec:4.1} and its corollary which generalizes Proposition~\ref{sec:1.4}.

\subsection{Non-quasicompactness of~$N \backslash G / H$}
\label{sec:4.1}
\begin{theorem*}
Let $k$ be a local field of characteristic zero, $\mathbf{G}$ a reductive $k$-group, $\mathbf{H}$ a $k$-subgroup of~$\mathbf{G}$, $G := \mathbf{G}_k$, $H := \mathbf{H}_k$ and $N$ a nilpotent subgroup of~$G$.

If $N \backslash G / H$ is quasicompact, then $\mathbf{H}$ contains a maximal unipotent $k$-subgroup of~$\mathbf{G}$.
\end{theorem*}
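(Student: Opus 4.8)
The plan is to argue by contraposition: assuming that $\mathbf{H}$ contains no maximal unipotent $k$-subgroup of $\mathbf{G}$, I would show that $N\backslash G/H$ is not quasicompact. First I would record the routine reductions. Quasicompactness of $N\backslash G/H$ is equivalent to the existence of a compact subset $C\subseteq G$ with $G=NCH$, and this is the form I would use throughout. Replacing $N$ by the group of $k$-points of its Zariski closure only enlarges it and keeps it nilpotent (nilpotency is a Zariski-closed condition), while $N\backslash G/H$ surjects continuously onto the new double coset space; so I may assume $N=\mathbf{N}_k$ for a nilpotent $k$-group $\mathbf{N}$. I keep $\mathbf{G}$ reductive and use the Cartan decomposition and Cartan projection $\mu\colon G\to A^+$ of~\ref{sec:2.2}, recalling that the maximal unipotent $k$-subgroups are exactly the unipotent radicals of the minimal parabolic $k$-subgroups.

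The heart of the argument is the reduction to $k$-rank one announced in~\ref{sec:1.4}, organised around $\mu$ and the representation estimates of~\ref{sec:2.3} and~\ref{sec:2.4}. The basic tool is a subadditivity property of $\mu$ for the dominance order: submultiplicativity of operator norms together with the lemma of~\ref{sec:2.4} (which reads $|\chi(\mu(g))|\asymp\|\rho(g)\|$ off a highest-weight representation $\rho$) gives $\mu(xy)\preceq\mu(x)+\mu(y)$ up to a bounded error in $\log A^+$. The second ingredient is a structural fact about the nilpotent group: writing the identity component of $\mathbf{N}$ as $\mathbf{S}\times\mathbf{U}_{\mathbf N}$ with $\mathbf{S}$ a central $k$-torus (its split part conjugated into $\mathbf{A}$) and $\mathbf{U}_{\mathbf N}$ unipotent, one proves that $\mu(N)$ is \emph{degenerate}: modulo a compact subset of $A$ it remains in a finite union of $W$-translates of the lower-dimensional directions coming from the split torus of $\mathbf{N}$ and the nilpotent directions of $\mathbf{U}_{\mathbf N}$, so its asymptotic cone in $\mathfrak{a}^+$ meets no open Weyl chamber. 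Proving this degeneracy representation by representation, via the lemma of~\ref{sec:2.4}, is precisely where the estimates localise to a single dominant direction and hence to rank one.

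Granting these, I would pick a regular split ray $a_t=\exp(tX)$ with $X$ deep in the open cone $\log A^{++}$ and show $a_t\notin NCH$ for large $t$, contradicting $G=NCH$. Indeed, from $a_t=\nu_t c_t h_t$ the subadditivity of $\mu$ and the boundedness of $\mu(c_t)$ force $tX=\log\mu(a_t)\preceq\log\mu(\nu_t)+\log\mu(h_t)+O(1)$, so the unbounded regular direction $X$ must be $\preceq$-dominated by a sum of a contribution from $\mu(N)$ and one from $\mu(H)$. The degeneracy of $\mu(N)$ removes the $N$-contribution from every regular direction, so the regular growth would have to come entirely from $\mu(H)$; and it is here that I would invoke the hypothesis, showing that a group $\mathbf{H}$ missing every conjugate of the maximal unipotent cannot make $\mu(H)$ cofinal along a chamber-regular ray (a maximal unipotent in $H$ being exactly what produces such cofinality). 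I expect this last dichotomy to be the main obstacle: the $N$-side is controlled by the structural lemma, but translating ``$\mathbf{H}$ contains no maximal unipotent'' into the quantitative thinness of $\mu(H)$ along some regular ray is the step that genuinely requires the rank-one reduction together with the simultaneous norm estimates in the representations of~\ref{sec:2.3}, rather than any soft argument.
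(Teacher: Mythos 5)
Your strategy has a fatal flaw at its core: both of the two key claims you lean on are false. First, the asserted \emph{degeneracy} of $\mu(N)$ for a nilpotent subgroup $N$ does not hold. A maximal unipotent $k$-subgroup $U$ is nilpotent, and by Kostant's decomposition $G=KUK$ (cited in the paper's Corollary~\ref{sec:4.1}b) one has that $\mu(U)$ fills up essentially all of $A^+$ modulo a compact set; already in $\SL(3,\mathbb{R})$ the element $\left(\begin{smallmatrix}1&t&t^2\\0&1&t\\0&0&1\end{smallmatrix}\right)$ has singular values $\asymp(t^2,1,t^{-2})$, deep in the open Weyl chamber. So the asymptotic cone of $\mu(N)$ can be all of $A^+$, and the ``$N$-side'' of your inequality cannot be discarded. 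Second, the asserted implication ``$\mathbf{H}$ contains no maximal unipotent $\Rightarrow$ $\mu(H)$ is thin along some regular ray'' is also false: take $H=A$ in $\SL(2,\mathbb{R})$, which contains no nontrivial unipotent yet has $\mu(H)=A^+$. More structurally, Cartan-projection data modulo compacts cannot detect quasicompactness of double coset spaces at all: in $\SL(2,\mathbb{R})$ the subgroups $AU$ and $A$ have the same image under $\mu$ up to compacts, yet $G/AU$ is compact while $G/A$ is not. The properness criterion of Chapter~\ref{sec:5} is genuinely a statement about \emph{properness}, and your attempt to run the same kind of argument for \emph{cocompactness} founders on exactly this point.

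The paper's proof is of a different nature and does not use $\mu$ at all. It first replaces $H$ by a cocompact subgroup of the minimal $k$-split solvable group $AU$ via a closed-orbit argument (Lemma~\ref{lemma_4.2.2}, using Borel--Tits), reduces to $\mathbf{H}=\mathbf{A}'\mathbf{U}'$ with $\mathbf{U}'$ of codimension one in $\mathbf{U}$ containing $[\mathbf{U},\mathbf{U}]$, and passes to a standard rank-one parabolic $P_\alpha$ whose Levi quotient carries the problem. The rank-one case is then settled not by norm estimates but by a precise boundedness statement in the ``horospherical'' coordinates $v_nu_na_n\in VUA$ (Lemma~\ref{sec:4.4}), proved by applying $\Ad$ to the grading element $H_o$ and using $\SL(2)$-triples; the paper even exhibits an explicit $\SL(3)$ example showing that this boundedness fails without the rank-one reduction. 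If you want to salvage your outline, you would need to replace the Cartan projection by an invariant fine enough to see the relative position of $N$ and $H$ inside $G$, which is exactly what the $VUA$ decomposition provides and what $\mu$ cannot.
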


The proof of this proposition is based on a reduction to the case of $k$-rank one. It is done in sections \ref{sec:4.2} to~\ref{sec:4.5}. First of all let us state a corollary of this theorem.

\begin{corollary*}
Same notations. We assume that $G/H$ is not compact.
\begin{enumerate}[label=\alph*)]
\item If $\mathbf{H}$~is reductive then $N \backslash G / H$ is not quasicompact.
\item ($k = \mathbb{R}$) If $N$ acts properly on~$G/H$ then $N \backslash G / H$ is not compact.
\end{enumerate}
\end{corollary*}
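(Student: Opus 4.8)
The plan is to derive both statements by contradiction from Theorem~\ref{sec:4.1}: (quasi)compactness of $N\backslash G/H$ forces $\mathbf{H}$ to contain a maximal unipotent $k$-subgroup $\mathbf{U}$ of $\mathbf{G}$, and I would show that such a large $\mathbf{H}$ is incompatible with the noncompactness of $G/H$ under each extra hypothesis. Throughout I fix a minimal parabolic $k$-subgroup $\mathbf{P} = \mathbf{L}\ltimes\mathbf{U}$ with $\mathbf{U} = R_u(\mathbf{P})$ and Levi $\mathbf{L} = Z_{\mathbf{G}}(\mathbf{A}) = \mathbf{M}\mathbf{A}$, where $\mathbf{M}$ is the anisotropic kernel; since $\mathbf{M}$ is $k$-anisotropic, $\mathbf{M}_k$ is compact, and one has the Iwasawa decomposition $G = K\,\mathbf{M}_k\,A\,\mathbf{U}_k$.

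For part~(a), assume $N\backslash G/H$ is quasicompact, so by Theorem~\ref{sec:4.1} we may take $\mathbf{U}\subseteq\mathbf{H}$. As $\mathbf{H}$ is reductive, any unipotent $k$-subgroup of $\mathbf{H}$ containing $\mathbf{U}$ is a unipotent subgroup of $\mathbf{G}$ containing the maximal $\mathbf{U}$, hence equals $\mathbf{U}$; thus $\mathbf{U}$ is a maximal unipotent subgroup of $\mathbf{H}$ too, with a minimal parabolic $\mathbf{P}_H = \mathbf{H}\cap\mathbf{P}$ and a maximal $k$-split torus $\mathbf{A}_H\subseteq\mathbf{A}$. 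The first real step is the equal-rank statement $\mathbf{A}_H = \mathbf{A}$. Its engine is that the Levi factor $Z_{\mathbf{H}}(\mathbf{A}_H)$ is $k$-anisotropic modulo $\mathbf{A}_H$, hence, in characteristic zero, contains no nontrivial unipotent $k$-subgroup; therefore no root $\alpha\in\Sigma(\mathbf{A},\mathbf{G})$ can restrict trivially to $\mathbf{A}_H$, for otherwise the root group $\mathbf{U}_\alpha\subseteq\mathbf{U}\subseteq\mathbf{H}$ would lie inside that Levi. With compatible orderings, $\mathbf{U} = \prod_{\alpha>0}\mathbf{U}_\alpha$ and the opposite maximal unipotent subgroup of $\mathbf{H}$ is $\mathbf{U}^- = \prod_{\alpha<0}\mathbf{U}_\alpha\subseteq\mathbf{H}$; then $\langle\mathbf{U},\mathbf{U}^-\rangle\subseteq\mathbf{H}$ has a Lie algebra containing all the brackets $[\mathfrak{g}_\alpha,\mathfrak{g}_{-\alpha}]$, whose coroot directions span $\mathfrak{a}$, so $\mathbf{A}\subseteq\mathbf{H}$. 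Finally, from $\mathbf{A}\subseteq\mathbf{H}$ and $\mathbf{U}_k\subseteq H$ the Iwasawa decomposition gives $G = K\,\mathbf{M}_k\,A\,\mathbf{U}_k\subseteq (K\mathbf{M}_k)H$ with $K\mathbf{M}_k$ compact, so $G/H$ is compact, contradicting the hypothesis.

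For part~(b), over $k = \mathbb{R}$, assume $N$ acts properly on $G/H$ with $N\backslash G/H$ compact. A proper action has Hausdorff quotient, so this quotient is in particular quasicompact and Theorem~\ref{sec:4.1} again yields $\mathbf{U}\subseteq\mathbf{H}$. Reductivity is now unavailable, so I would exploit properness through the Cartan projection: by Lemma~\ref{lemma_3.1.1} and the properness criterion (\S\ref{sec:1.5}, Theorem~\ref{sec:5.2}), properness means that $\mu(N)\cap\mu(H)M$ is compact for every compact $M\subseteq A$. The key input is that the Cartan projection of a maximal unipotent subgroup is cofinal in the chamber: there is a compact $M_0\subseteq A$ with $A^+\subseteq\mu(\mathbf{U}_k)\,M_0$. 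Since $\mu(H)\supseteq\mu(\mathbf{U}_k)$ and $\mu(N)\subseteq A^+$, we obtain $\mu(N)\subseteq\mu(H)M_0$, whence $\mu(N) = \mu(N)\cap\mu(H)M_0$ is compact. As $\mu$ is proper, $N$ is relatively compact and $\overline{N}$ is a compact subgroup; but a compact group cannot act with compact quotient on the noncompact $G/H$ (the map $G/H\to\overline{N}\backslash G/H$ is proper with compact base, forcing $G/H$ compact). This contradiction gives the claim.

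The hard part in (a) will be the equal-rank step $\mathbf{A}_H=\mathbf{A}$: this is exactly where the $k$-rational structure enters, through the absence of unipotents in anisotropic reductive groups in characteristic zero, and it is what separates reductive $\mathbf{H}$ from a general $\mathbf{H}\supseteq\mathbf{U}$ (for which $G/H$ need not be compact, e.g.\ $H=\mathbf{U}_k$). The hard part in (b) will be establishing the cofinality $A^+\subseteq\mu(\mathbf{U}_k)M_0$; I expect this to come from the reduction to $k$-rank one underlying Theorem~\ref{sec:4.1} (where $\mathbf{U}_k$ is a vector or Heisenberg group and $\mu(\mathbf{U}_k)$ visibly fills $A^+$), together with the identity $\mu(g^{-1}) = \iota(\mu(g))$, which keeps $\mu(\mathbf{U}_k)$ symmetric across the whole chamber rather than confined to an $\iota$-half.
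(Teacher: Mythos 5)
Your top-level strategy coincides with the paper's: assume (quasi)compactness, invoke Theorem~\ref{sec:4.1} to obtain a maximal unipotent $\mathbf{U}\subseteq\mathbf{H}$, and contradict either the noncompactness of $G/H$ in (a) or the noncompactness of $N$ in (b). But both of your routes to the contradiction have genuine gaps. In (a), the inference ``no root of $\Sigma(\mathbf{A},\mathbf{G})$ restricts trivially to $\mathbf{A}_H$, therefore $\mathbf{A}_H=\mathbf{A}$'' is a non sequitur (a proper subtorus can avoid every root hyperplane), and the identification of the opposite maximal unipotent subgroup of $\mathbf{H}$ with $\prod_{\alpha<0}\mathbf{U}_\alpha$ is unjustified: that opposite is defined relative to $\mathbf{A}_H$ and the root decomposition of $\mathrm{Lie}(\mathbf{H})$, and you have no a priori control of how it sits inside the $\mathbf{A}$-root decomposition of $\mathfrak{g}$. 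The paper sidesteps all of this: it writes $\mathbf{G}=\mathbf{G}_{\mathrm{an}}\times\mathbf{G}_{\mathrm{is}}$ and observes that a \emph{reductive} $\mathbf{H}$ containing a maximal unipotent $k$-subgroup must contain $\mathbf{G}_{\mathrm{is}}$ (the opposite maximal unipotent of $\mathbf{H}$ is a maximal unipotent of $\mathbf{G}$ meeting $\mathbf{U}$ trivially, and two such generate $\mathbf{G}_{\mathrm{is}}$), so $G/H$ is a quotient of the compact group $G_{\mathrm{an}}$ (\cite{B-T1}~9.4) --- no Iwasawa decomposition or torus computation is needed.

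In (b), everything hinges on your cofinality claim $A^+\subseteq\mu(\mathbf{U}_k)M_0$, which you leave unproved; the derivation you sketch from the rank-one reduction would not work, since surjectivity of $\mu$ restricted to $U$ onto $A^+$ in higher rank is not a consequence of the rank-one case. The claim is precisely Kostant's theorem $G=KUK$ (\cite{Kos}~5.1), which the paper simply cites. Granting it, your argument and the paper's are the same statement in two languages: $A^+\subseteq\mu(U)M_0$ says exactly that $G$ is contained in $H$ modulo the compacts of $G$, after which Lemma~\ref{lemma_3.1.2} shows that $(N,G)$ is $G$-proper, i.e.\ that $N$ is compact, with no need for the full properness criterion of Theorem~\ref{sec:5.2}. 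So for (b) you should cite Kostant rather than attempt to rederive his theorem, and for (a) you should either repair the opposite-unipotent step as indicated or switch to the paper's anisotropic/isotropic factorization.
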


\begin{proof}~
\begin{enumerate}[label=\alph*)]
\item We may assume that $\mathbf{G}$ is simply connected. We decompose $\mathbf{G}$ into a product $\mathbf{G} = \mathbf{G}_{\mathrm{an}} \times \mathbf{G}_{\mathrm{is}}$ where $\mathbf{G}_{\mathrm{an}}$~is the largest anisotropic connected normal $k$-subgroup of~$\mathbf{G}$ and $\mathbf{G}_{\mathrm{is}}$~is the largest connected normal $k$-subgroup of~$\mathbf{G}$ that has no anisotropic factor.

Otherwise $\mathbf{H}$ contains a maximal unipotent $k$-subgroup of~$\mathbf{G}$. Since $\mathbf{H}$~is reductive, $\mathbf{H}$ contains~$\mathbf{G}_{\mathrm{is}}$ hence $G/H$~is a quotient of the group of $k$-points $G_{\mathrm{an}}$ which is compact (\cite{B-T1}~9.4). Contradiction.
\item Otherwise $\mathbf{H}$ contains a maximal unipotent $k$-subgroup~$\mathbf{U}$ of~$\mathbf{G}$. There exists a maximal compact subgroup~$K$ of~$G$ such that $G = K U K$ (\cite{Kos}~5.1). It follows that $G = K H K$ and $G$~is contained in~$H$ modulo the compacts of~$G$. Lemma~\ref{lemma_3.1.1} then shows that the pair~$(N, G)$ is $G$-proper. Hence $N$~is a compact group. Contradiction. \qedhere
\end{enumerate}
\end{proof}

\subsection{Parabolic $k$-subgroups}
\label{sec:4.2}

Let us introduce a few classical notations (see \cite{Bor}~\S 21) that will be useful in this section only.

Let $\mathbf{A}$ be a maximal $k$-split torus of~$\mathbf{G}$, $\Sigma = \Sigma_\mathbf{G}$ the system of the $k$-roots of~$\mathbf{A}$ in~$\mathbf{G}$, $\Sigma^+$~a choice of positive roots, $\Pi$~the simple $k$-roots of~$\Sigma^+$, $\mathbf{L}$~the centralizer of~$\mathbf{A}$ in~$\mathbf{G}$ and $\mathbffrakg$ the Lie algebra of~$\mathbf{G}$. As usual, we denote by the corresponding Roman letter $A$, $L$ etc. the group of $k$-points.

For $\alpha$ in~$\Sigma$, we denote by $\mathbffrakg_\alpha := \setsuch{X \in \mathbffrakg}{\forall a \in A,\; \operatorname{Ad} a(X) = \alpha(a)X}$ the corresponding root space, $\mathbffrakg_{(\alpha)} := \mathbffrakg_\alpha \oplus \mathbffrakg_{2\alpha}$ and $\mathbf{U}_{(\alpha)}$ the unique unipotent $k$-subgroup (normalized by~$\mathbf{L}$) with Lie algebra~$\mathbffrakg_{(\alpha)}$. We say that a subset~$\Theta$ of~$\Sigma$ is closed if ${\alpha, \beta \in \Theta,}\; {\alpha + \beta \in \Sigma} \implies {\alpha + \beta \in \Theta}$. We denote by $\langle \Theta \rangle$ the smallest closed subset of~$\Sigma$ containing~$\Theta$. For every closed subset~$\Theta$ of~$\Sigma^+$, we call $\mathbf{U}_\Theta$ the unique unipotent $k$-subgroup (normalized by~$\mathbf{L}$) with Lie algebra $\mathbffrakg_\Theta := \bigoplus_{\alpha \in \Theta} \mathbffrakg_\alpha$. We set $\mathbf{U} := \mathbf{U}_{\Sigma^+}$, this is a maximal unipotent $k$-subgroup of~$\mathbf{G}$. The $k$-group $\mathbf{P} := \mathbf{L}\mathbf{U}$ is a minimal parabolic $k$-subgroup of~$\mathbf{G}$. For every subset $\Theta$ of~$\Pi$, we denote by~$\mathbf{A}^\Theta$ the Zariski-connected component of~$\bigcap_{\alpha \in \Theta} \mathbf{Ker}(\alpha)$, $\mathbf{L}_\Theta$~the centralizer of~$A^\Theta$, $\mathbf{U}^\Theta := \mathbf{U}_{\Sigma^+ \setminus \langle \Theta \rangle}$ and $\mathbf{P}_\Theta := \mathbf{L}_\Theta \mathbf{U}^\Theta$~the standard parabolic $k$-subgroup associated to~$\Theta$. We also have the equality for the $k$-points $P_\Theta = L_\Theta U^\Theta$ (\cite{B-T1}~3.14).

In the following well-known lemma, we do not need to assume that $k$~is a local field.

\begin{lemma}
\label{lemma_4.2.1}
($\characteristic(k) = 0$)
\begin{enumerate}[label=\alph*)]
\item Let $\mathbf{U}'$ be a $k$-subgroup of~$\mathbf{U}$ and $[\mathbf{U}, \mathbf{U}]$ the derived subgroup of~$\mathbf{U}$. If we have $\mathbf{U} = {\mathbf{U}' \cdot [\mathbf{U}, \mathbf{U}]}$, then $\mathbf{U}' = \mathbf{U}$.
\item We have $[\mathbf{U}, \mathbf{U}] = \mathbf{U}_{\Sigma^+ \setminus \Pi}$.
\end{enumerate}
\end{lemma}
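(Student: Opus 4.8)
The plan is to work at the level of Lie algebras throughout, which is legitimate because $\characteristic(k) = 0$: every unipotent $k$-group is connected, and the exponential furnishes a $k$-isomorphism of varieties $\mathrm{Lie}(\mathbf{U}) \to \mathbf{U}$ under which $k$-subgroups correspond to Lie subalgebras, normal subgroups to ideals, and the derived subgroup to the derived subalgebra. I write $\mathfrak{u} := \mathrm{Lie}(\mathbf{U}) = \bigoplus_{\alpha \in \Sigma^+} \mathbffrakg_\alpha$. The two parts are logically independent, and I would treat (a) as routine and reserve the real work for (b).

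For part (a), I would first reduce the group identity $\mathbf{U} = \mathbf{U}' \cdot [\mathbf{U}, \mathbf{U}]$ to the Lie-algebra identity $\mathfrak{u}' + [\mathfrak{u}, \mathfrak{u}] = \mathfrak{u}$, where $\mathfrak{u}' := \mathrm{Lie}(\mathbf{U}')$: since $[\mathbf{U},\mathbf{U}]$ is normal, the product $\mathbf{U}' \cdot [\mathbf{U},\mathbf{U}]$ is a connected closed $k$-subgroup with Lie algebra $\mathfrak{u}' + [\mathfrak{u},\mathfrak{u}]$, and equality of connected groups forces equality of Lie algebras. Then I would invoke the Frattini property of nilpotent Lie algebras: were $\mathfrak{u}'$ proper, it would lie in a maximal proper subalgebra $\mathfrak{m}$; but a maximal subalgebra of a nilpotent Lie algebra is an ideal of codimension one (its normalizer strictly contains it, hence is everything, so it is an ideal, and the quotient has no proper nonzero subalgebra, so is one-dimensional), whence $[\mathfrak{u},\mathfrak{u}] \subseteq \mathfrak{m}$ because $\mathfrak{u}/\mathfrak{m}$ is abelian. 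This gives $\mathfrak{u}' + [\mathfrak{u},\mathfrak{u}] \subseteq \mathfrak{m} \subsetneq \mathfrak{u}$, a contradiction. Hence $\mathfrak{u}' = \mathfrak{u}$, and since both groups are connected, $\mathbf{U}' = \mathbf{U}$.

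For part (b), by the uniqueness built into the definition of the groups $\mathbf{U}_\Theta$ it suffices to check that $[\mathbf{U},\mathbf{U}]$ is a unipotent $k$-subgroup normalized by $\mathbf{L}$ (immediate: it is the derived group of a $k$-group, and $\mathbf{L}$ normalizes $\mathbf{U}$) and that $[\mathfrak{u},\mathfrak{u}] = \mathbffrakg_{\Sigma^+ \setminus \Pi}$, after noting that $\Sigma^+ \setminus \Pi$ is closed (a sum of two elements of $\Sigma^+ \setminus \Pi$ lying in $\Sigma$ is again a non-simple positive root). The inclusion $[\mathfrak{u},\mathfrak{u}] \subseteq \mathbffrakg_{\Sigma^+ \setminus \Pi}$ is immediate from $[\mathbffrakg_\alpha, \mathbffrakg_\beta] \subseteq \mathbffrakg_{\alpha+\beta}$ together with the fact that a sum $\alpha + \beta$ of two positive roots, when it is a root, is never simple. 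For the reverse inclusion I would use that $[\mathfrak{u},\mathfrak{u}]$ is stable under $\mathbf{A}$, hence is a direct sum of entire root spaces; it therefore suffices to show, for each non-simple $\gamma \in \Sigma^+$, that some bracket $[\mathbffrakg_\alpha, \mathbffrakg_\beta]$ with $\alpha, \beta \in \Sigma^+$ and $\alpha + \beta = \gamma$ is nonzero, for then all of $\mathbffrakg_\gamma$ lies in $[\mathfrak{u},\mathfrak{u}]$.

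The main obstacle is precisely this non-vanishing, the relative-root analogue of surjectivity of brackets, which one should not expect termwise; I would obtain it by base change to $\overline{k}$. There $\mathbf{G}$ is split with respect to a maximal torus $\mathbf{T} \supseteq \mathbf{A}$, the absolute root spaces are one-dimensional, and $[\mathbffrakg_{\tilde\alpha}, \mathbffrakg_{\tilde\beta}] = \mathbffrakg_{\tilde\alpha+\tilde\beta} \neq 0$ whenever $\tilde\alpha + \tilde\beta$ is an absolute root. It then remains to write a chosen absolute root $\tilde\gamma$ restricting to $\gamma$ as a sum $\tilde\alpha + \tilde\beta$ of absolute roots whose restrictions $\alpha, \beta$ are both nonzero; this uses that $\gamma$, being non-simple, has relative height at least two, so an absolute root chain realizing $\tilde\gamma$ leaves two pieces each meeting the complement of the anisotropic kernel. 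The corresponding absolute brackets then witness $[\mathbffrakg_\alpha, \mathbffrakg_\beta] \neq 0$ after restriction. Alternatively this reverse inclusion may simply be quoted from the standard description of the filtration of $\mathbf{U}$ by root heights in Borel--Tits. I expect the only delicate point to be the bookkeeping in this last step, namely choosing the absolute decomposition so that both restricted summands are genuine positive $k$-roots.
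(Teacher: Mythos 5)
Your part (a) is correct and is essentially the paper's argument: the paper enlarges a proper $\mathbf{U}'$ to a codimension-one subgroup via Engel's theorem, notes that it is then normal with one-dimensional (hence abelian) quotient, and concludes $\mathbf{U}'\cdot[\mathbf{U},\mathbf{U}]\subseteq\mathbf{U}'\neq\mathbf{U}$; your Frattini-style version at the level of the nilpotent Lie algebra is the same reasoning.

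For part (b) there is a genuine gap in your main line of argument. You assert that $[\mathfrak{u},\mathfrak{u}]$, being stable under $\mathbf{A}$, is a direct sum of \emph{entire} root spaces, and on that basis you only set out to show that for each non-simple $\gamma\in\Sigma^+$ \emph{some} bracket $[\mathfrak{g}_\alpha,\mathfrak{g}_\beta]$ with $\alpha+\beta=\gamma$ is nonzero. But $\mathbf{A}$ acts on the restricted root space $\mathfrak{g}_\gamma$ through the single character $\gamma$, so every graded subspace of $\mathfrak{u}$ is $\mathbf{A}$-stable; stability under $\mathbf{A}$ only yields $[\mathfrak{u},\mathfrak{u}]=\bigoplus_\gamma\bigl([\mathfrak{u},\mathfrak{u}]\cap\mathfrak{g}_\gamma\bigr)$ and says nothing about each piece being all of $\mathfrak{g}_\gamma$. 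Since relative root spaces are in general of dimension greater than one, nonvanishing of one bracket does not give $\mathfrak{g}_\gamma\subseteq[\mathfrak{u},\mathfrak{u}]$. What is actually needed --- and what the paper's one-line proof invokes --- is the equality of full restricted root spaces $[\mathfrak{g}_\alpha,\mathfrak{g}_\beta]=\mathfrak{g}_{\alpha+\beta}$ for $\alpha,\beta\in\Sigma^+$. Your base-change strategy could be repaired along the lines you sketch, but you would have to produce, for \emph{every} absolute root $\tilde\gamma$ restricting to $\gamma$, a decomposition $\tilde\gamma=\tilde\alpha+\tilde\beta$ into absolute roots whose restrictions are both \emph{positive} relative roots (not merely nonzero: one must rule out a negative restriction, which would take you outside $\mathfrak{u}$); capturing all such $\mathfrak{g}_{\tilde\gamma}$ over $\bar k$ and descending then gives $\mathfrak{g}_\gamma\subseteq[\mathfrak{u},\mathfrak{u}]$. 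As written the argument proves strictly less than the lemma; your fallback of simply quoting the surjectivity of brackets of relative root spaces from Borel--Tits is in effect what the paper does.
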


\begin{proof}~
\begin{enumerate}[label=\alph*)]
\item This holds for any unipotent group: if $\mathbf{U}' \neq \mathbf{U}$, by Engel's theorem, we may assume that $\mathbf{U}'$ has codimension~$1$; $\mathbf{U}'$~is then normal in~$\mathbf{U}$ and $\mathbf{U} / \mathbf{U}'$ is abelian. Hence $\mathbf{U} \neq \mathbf{U}' \cdot [\mathbf{U}, \mathbf{U}]$. Contradiction.
\item This follows from the equality $[\mathbffrakg_\alpha, \mathbffrakg_\beta] = \mathbffrakg_{\alpha + \beta}$ for any $\alpha, \beta$ in~$\Sigma^+$. \qedhere
\end{enumerate}
\end{proof}

The following lemma will allow us to assume that $\mathbf{H}$ is solvable $k$-split.

\begin{lemma}
\label{lemma_4.2.2}
Let $\mathbf{G}$ be a reductive $k$-group, $\mathbf{H}$ a $k$-subgroup. Then there exists a maximal $k$-split torus~$\mathbf{A}$ of~$\mathbf{G}$ and a maximal unipotent $k$-subgroup $\mathbf{U}$ normalized by~$\mathbf{A}$ such that $H/(H \cap A U)$ is compact.
\end{lemma}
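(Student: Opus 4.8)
The plan is to produce inside $\mathbf{H}$ a solvable $k$-split subgroup of the form $\mathbf{A}_H\mathbf{U}_H$ (a maximal split torus times a unipotent group it normalizes) with the property that $H/(A_H U_H)$ is compact, and then to choose $\mathbf{A}$ and $\mathbf{U}$ so that $\mathbf{A}_H\mathbf{U}_H\subseteq\mathbf{A}\mathbf{U}$. Once $A_H U_H\subseteq H\cap AU$, the quotient $H/(H\cap AU)$ is a continuous image of the compact $H/(A_H U_H)$, hence compact. So the work is entirely in constructing $\mathbf{A}_H\mathbf{U}_H$ and fitting it into an $\mathbf{A}\mathbf{U}$.

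First I would fix a maximal $k$-split torus $\mathbf{A}_H$ of $\mathbf{H}$ and set $\mathbf{M}:=\mathbf{Z}_{\mathbf{G}}(\mathbf{A}_H)$, a reductive Levi subgroup of $\mathbf{G}$. Let $\mathbf{L}_H:=\mathbf{Z}_{\mathbf{H}}(\mathbf{A}_H)=\mathbf{H}\cap\mathbf{M}$. Its unipotent radical $\mathbf{R}_u(\mathbf{L}_H)$, which in characteristic zero equals the $\mathbf{A}_H$-fixed part $\mathbf{R}_u(\mathbf{H})^{\mathbf{A}_H}$, is a unipotent $k$-subgroup of $\mathbf{M}$. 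By the theorem of Borel--Tits applied inside $\mathbf{M}$, it is contained in the unipotent radical of a parabolic $k$-subgroup of $\mathbf{M}$, hence in a maximal unipotent $k$-subgroup $\mathbf{U}_M$ of $\mathbf{M}$. Crucially I would choose $\mathbf{A}$ only now: pick $\mathbf{A}$ to be a maximal $k$-split torus of a Levi of a minimal parabolic of $\mathbf{M}$ whose unipotent radical is $\mathbf{U}_M$, so that $\mathbf{A}$ normalizes $\mathbf{U}_M$. Since $\mathbf{A}_H$ is central in $\mathbf{M}$ it lies in $\mathbf{A}$, and being maximal split in $\mathbf{M}$ the torus $\mathbf{A}$ is maximal split in $\mathbf{G}$.

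Next I would build the unipotent part dynamically. Choose a cocharacter $\nu\in X_*(\mathbf{A}_H)$ that pairs nontrivially with every nonzero weight of $\mathbf{A}_H$ in the adjoint representation of $\mathbf{G}$. Then $\mathbf{Z}_{\mathbf{G}}(\nu)=\mathbf{M}$ and $\mathbf{Z}_{\mathbf{H}}(\nu)=\mathbf{L}_H$, while the contracting subgroup $\mathbf{U}^+(\nu)=\{g:\lim_{t\to0}\nu(t)g\nu(t)^{-1}=e\}$ of $\mathbf{G}$ and its trace $\mathbf{N}_H:=\mathbf{H}\cap\mathbf{U}^+(\nu)$ on $\mathbf{H}$ are unipotent, with $\mathbf{N}_H$ the unipotent radical of the parabolic $k$-subgroup $\mathbf{Q}:=\mathbf{P}^+_{\mathbf{H}}(\nu)=\mathbf{L}_H\ltimes\mathbf{N}_H$ of $\mathbf{H}$. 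Setting $\mathbf{U}:=\mathbf{U}_M\,\mathbf{U}^+(\nu)$ yields a maximal unipotent $k$-subgroup of $\mathbf{G}$ normalized by $\mathbf{A}$, namely the unipotent radical of a minimal parabolic of $\mathbf{G}$ contained in the parabolic $\mathbf{M}\,\mathbf{U}^+(\nu)$. Finally I put $\mathbf{U}_H:=\mathbf{R}_u(\mathbf{L}_H)\,\mathbf{N}_H$, a unipotent $k$-subgroup normalized by $\mathbf{A}_H$; by construction $\mathbf{R}_u(\mathbf{L}_H)\subseteq\mathbf{U}_M$ and $\mathbf{N}_H\subseteq\mathbf{U}^+(\nu)$, whence $\mathbf{U}_H\subseteq\mathbf{U}$ and $\mathbf{A}_H\mathbf{U}_H\subseteq\mathbf{A}\mathbf{U}$.

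It then remains to verify the two compactness facts. Since $\mathbf{Q}$ is a parabolic $k$-subgroup of $\mathbf{H}$, the variety $\mathbf{H}/\mathbf{Q}$ is projective and $H/Q$ is compact. On the other hand $Q/(A_H U_H)\cong L_H/(A_H\,\mathbf{R}_u(\mathbf{L}_H))$, and the reductive group $\mathbf{L}_H/(\mathbf{A}_H\mathbf{R}_u(\mathbf{L}_H))$ is $k$-anisotropic by maximality of $\mathbf{A}_H$, so its $k$-points are compact (\cite{B-T1}~9.4). Combining the two gives $H/(A_H U_H)$ compact, and the surjection $H/(A_H U_H)\to H/(H\cap AU)$ then yields the claim. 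The delicate point, and the reason for the detour through $\mathbf{M}$ and $\mathbf{U}_M$, is precisely the zero-weight unipotent part $\mathbf{R}_u(\mathbf{L}_H)=\mathbf{R}_u(\mathbf{H})^{\mathbf{A}_H}$: when $\mathbf{H}$ is not reductive this part is centralized by $\mathbf{A}_H$, so it cannot be absorbed into the positive root groups $\mathbf{U}^+(\nu)$ and must instead be lodged, compatibly with the eventual choice of $\mathbf{A}$, inside a maximal unipotent of the Levi $\mathbf{M}$.
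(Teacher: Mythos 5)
There is a genuine gap at the step where you assert that $\mathbf{Q}=\mathbf{L}_H\ltimes\mathbf{N}_H$ is a parabolic $k$-subgroup of $\mathbf{H}$, so that $\mathbf{H}/\mathbf{Q}$ is projective and $H/Q$ is compact. For a \emph{non-reductive} connected $\mathbf{H}$, the dynamically defined subgroup $Z_{\mathbf{H}}(\nu)\ltimes(\mathbf{H}\cap\mathbf{U}^+(\nu))$ has complete quotient only if it contains $\mathbf{R}_u(\mathbf{H})$ (every parabolic subgroup contains the unipotent radical), and your construction only captures the part of $\mathbf{R}_u(\mathbf{H})$ on which $\nu$ acts with nonnegative weights: the zero-weight part goes into $\mathbf{U}_M$ and the positive part into $\mathbf{N}_H$, but the $\nu$-\emph{negative} part of $\mathbf{R}_u(\mathbf{H})$ lies in neither $\mathbf{L}_H$ nor $\mathbf{N}_H$. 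Since $\mathbf{R}_u(\mathbf{H})$ is $\mathbf{A}_H$-stable, its nonzero $\mathbf{A}_H$-weights need not lie in a half-space, so no choice of $\nu$ repairs this. Concretely, take $\mathbf{G}=\mathbf{SL}(3)$, $\mathbf{A}_H=\{\operatorname{diag}(t,1,t^{-1})\}$ and $\mathbf{H}=\mathbf{A}_H\ltimes\{I+xE_{12}+yE_{32}\}$ (a group: $E_{12}$ and $E_{32}$ commute and carry $\mathbf{A}_H$-weights $t$ and $t^{-1}$). Then $\mathbf{M}$ is the diagonal torus, $\mathbf{L}_H=\mathbf{A}_H$, $\mathbf{R}_u(\mathbf{L}_H)=\mathbf{U}_M=1$, $\mathbf{N}_H=\{I+xE_{12}\}$, so $\mathbf{Q}=\mathbf{A}_H\ltimes\{I+xE_{12}\}$ and $H/Q\cong k$ is not compact; with your $\mathbf{U}=\mathbf{U}^+(\nu)$ (upper triangular unipotents) one likewise gets $H\cap AU=A_HU_H$ and $H/(H\cap AU)\cong k$. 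The lemma is still true for this $\mathbf{H}$, but only for a \emph{different} $\mathbf{U}$: in the reordered basis $(e_1,e_3,e_2)$ both $E_{12}$ and $E_{32}$ become strictly upper triangular and $\mathbf{H}\subset\mathbf{A}\mathbf{U}$ outright. So the correct pair $(\mathbf{A},\mathbf{U})$ cannot be manufactured from $\mathbf{A}_H$-weight data in the way you propose.

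For comparison, the paper sidesteps all structure theory of $\mathbf{H}$: fix any $\mathbf{A}$ and $\mathbf{U}$; then $G/AU$ is compact by the Iwasawa decomposition, the $H$-orbits in $G/AU$ are locally closed by Borel--Tits, hence some orbit is closed and therefore compact, and moving the basepoint to that orbit (i.e.\ conjugating the pair $(\mathbf{A},\mathbf{U})$) identifies $H/(H\cap AU)$ with it. If you want to keep a structural argument, the object to embed into $\mathbf{A}\mathbf{U}$ is $\mathbf{A}_H\mathbf{U}'_H$ with $\mathbf{U}'_H$ a maximal unipotent $k$-subgroup of $\mathbf{H}$ normalized by $\mathbf{A}_H$ (this one does contain all of $\mathbf{R}_u(\mathbf{H})$, and $H/(A_HU'_H)$ is compact); but placing this maximal connected $k$-split solvable subgroup inside some $\mathbf{A}\mathbf{U}$ is precisely the conjugacy statement that the closed-orbit argument delivers for free.
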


In other terms, $H$ meets the maximal $k$-split solvable $k$-subgroup $\mathbf{A}\mathbf{U}$ along a subgroup that is cocompact.

\begin{proof}
Let $\mathbf{A}$~be a maximal $k$-split subtorus of~$\mathbf{G}$ and $\mathbf{U}$~be a maximal unipotent $k$-subgroup normalized by~$\mathbf{A}$. Then by the Iwasawa decomposition (see (\cite{He}~IX.1.3) for the Archimedean case and \cite{Mac} for the non-Archimedean case), the homogeneous space~$G/A U$ is compact.

By~(\cite{B-T2}~3.18), the orbits of~$H$ in~$G/A U$ are locally closed. Hence $H$~has a closed orbit in~$G/A U$. Without loss of generality, it is the orbit of the basepoint. The quotient $H/(H \cap A U)$~is then homeomorphic to this orbit (loc. cit.) and in particular $H/(H \cap A U)$~is compact.
\end{proof}

\subsection{Reduction to the case of $k$-rank one}
\label{sec:4.3}
Let us show that Theorem~\ref{sec:4.1} holds in all generality if it holds for the $k$-groups~$\mathbf{G}$ whose $k$-rank is equal to~$1$.

By~\ref{lemma_4.2.2}, we may assume $\mathbf{H} \subset \mathbf{A}\mathbf{U}$. By (\cite{Bor} 10.6 and~19.2), we may assume that $\mathbf{H} = \mathbf{A}'\mathbf{U}'$ where $\mathbf{A}'$~is a subtorus of~$\mathbf{A}$ and $\mathbf{U}'$~is a unipotent $k$-subgroup of~$\mathbf{U}$. We then also have equality for the $k$-points: $H = A'U'$ (\cite{Bor}~15.8).

Similarly, we may assume that $N$ is the group of $k$-points of its Zariski-closure $\mathbf{N}$, that $\mathbf{N}$ is a maximal Zariski-connected nilpotent $k$-subgroup of~$\mathbf{A}\mathbf{U}$ and that $\mathbf{N} = \mathbf{A}''\mathbf{U}''$ where $\mathbf{A}''$~is a subtorus of~$\mathbf{A}$ and $\mathbf{U}''$~is a unipotent $k$-subgroup of~$\mathbf{U}$. Hence there exists a subset~$\Theta$ of~$\Pi$ such that $\mathbf{A}'' = \mathbf{A}^\Theta$ and $\mathbf{U}'' = \mathbf{U}_{\langle \Theta \rangle}$. We still have equality for the $k$-points: $N = A''U''$.

We want to show that $\mathbf{U}' = \mathbf{U}$. If it is not the case, we may assume, thanks to~\ref{lemma_4.2.1}, that $\mathbf{U}'$ contains $[\mathbf{U}, \mathbf{U}]$ and that $\mathbf{U}'$ has codimension~$1$ in~$\mathbf{U}$. Let us write $\mathbf{U}'_{(\alpha)} := \mathbf{U}' \cap \mathbf{U}_{(\alpha)}$ and $\Xi := \setsuch{\alpha \in \Pi}{\mathbf{U}'_{(\alpha)} \neq \mathbf{U}_{(\alpha)}}$. The set $\Xi$ is nonempty.

Since $\mathbf{U}'$ has codimension~$1$ in~$\mathbf{U}$ and is normalized by~$\mathbf{A}'$, we have $\mathbf{A}' \subset \mathbf{Ker}(\alpha - \alpha')$ for any $\alpha$, $\alpha'$ in~$\Xi$. We may suppose that $\mathbf{A}'$ is the connected component of~$\displaystyle \bigcap_{\alpha, \alpha' \in \Xi} \mathbf{Ker}(\alpha - \alpha')$.

The subgroup $A U$ is closed in~$G$ and contains $N$ and~$H$. Hence $N \backslash A U / A' U'$ is quasicompact and so is $A'' \backslash A / A'$. We deduce that $A''A'$ has finite index in~$A$ (\cite{Bor}~8.5). Hence $\# (\Xi \cap \Theta) \leq 1$. For more clarity, let us distinguish two cases:

~

\underline{1st case:} $\Xi \cap \Theta = \emptyset$. Let us fix an element~$\alpha$ in~$\Xi$. Let $\mathbf{P}_\alpha$ denote the standard parabolic $k$-subgroup associated to~$\{\alpha\}$, and let $\mathbf{P}_\alpha = \mathbf{L}_\alpha \mathbf{U}^\alpha$ be its Levi decomposition. The group of $k$-points~$P_\alpha$ is closed in~$G$ and contains $N$ and~$H$, hence $N \backslash P_\alpha / H$ is quasicompact.

We have the equality $U = U_{(\alpha)} \cdot U'$ since $U/U'$ identifies with a one-dimensional $k$-vector space and the image of~$U_{(\alpha)}$ is a nontrivial $k$-vector subspace of that space. Hence we have the equality $P_\alpha = L_\alpha U = L_\alpha U'$ and the identification
\[P_\alpha / H \simeq L_\alpha / A'U'_{(\alpha)}.\]
Let us study the action of~$N$ on that quotient. On the one hand, the group~$U^\Xi$ is normalized by~$P_\alpha$ and is contained in~$U'$, hence it acts trivially on that quotient. So does its subgroup~$U''$. On the other hand, the group~$A''$ is contained in~$L_\alpha$. Hence
\[N \backslash P_\alpha / H \simeq A'' \backslash L_\alpha / A'U'_{(\alpha)}\]
is not quasicompact since the semisimple $k$-rank of~$L_\alpha$ is equal to~$1$. Contradiction.

~

\underline{2nd case:} $\Xi \cap \Theta = \{\alpha\}$. The quotient $N \backslash P_\alpha / H$ is still quasicompact and we have the equality $P_\alpha = L_\alpha U'$ and hence the same identification
\[P_\alpha / H \simeq L_\alpha / A'U'_{(\alpha)}.\]
This time we have the equality $N = A''U'' = (A''U_{(\alpha)}) \cdot U''^\alpha$ where $U''^\alpha := U'' \cap U^\alpha$. Once again $U''^\alpha$ is contained in~$U^\Xi$ since $\langle \Theta \rangle \cap \langle \Xi \rangle = \langle \alpha \rangle$ and $U''^\alpha$ acts trivially on this quotient. On the other hand, $A''$~is still included in~$L_\alpha$, it is now even included in the center of~$L_\alpha$. Hence
\[N \backslash P_\alpha / H \simeq A''U_{(\alpha)} \backslash L_\alpha / A'U'_{(\alpha)}\]
is not quasicompact for the same reason. Contradiction.

\subsection{The case of $k$-rank one}
\label{sec:4.4}
To finish the proof of Theorem~4.1, we may thus assume that the semisimple $k$-rank of~$\mathbf{G}$ is equal to~$1$. Let $\mathbf{Z}$~be the center of~$\mathbf{G}$; the quotient $N \backslash G / (\mathbf{Z}\mathbf{H})_k$ is still quasicompact. We may thus assume that $\mathbf{G}$~is semisimple and adjoint.

Let $\alpha$~be the unique element of~$\Pi$ and $\mathbf{V} := \mathbf{U}_{(-\alpha)}$. The following lemma is a crucial ingredient of our proof.

\begin{lemma*}
($\characteristic(k) = 0, \rank_k(\mathbf{G}) = 1$)
\begin{enumerate}[label=\alph*)]
\item Let $v_n$, $u_n$ and~$a_n$ be sequences respectively in $V$, $U$ and~$A$ such that the sequence $g_n := v_n u_n a_n$ converges in~$G$. Then the sequence $u_n$ is bounded in~$U$.
\item There exists an element $v_0$ of~$V$ such that if $b_n$, $u_n$ and~$a_n$ are sequences respectively in $A$, $U$ and~$A$ such that the sequence $h_n := b_n v_0 u_n a_n$ converges in~$G$, then the sequence~$u_n$ is bounded in~$U$.
\end{enumerate}
\end{lemma*}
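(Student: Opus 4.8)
The plan is to work concretely in the rank-one situation, where the structure theory simplifies dramatically. Since $\rank_k(\mathbf{G}) = 1$ and $\mathbf{G}$ is semisimple adjoint, the restricted root system is either of type $A_1$ (so $\Sigma = \{\pm\alpha\}$) or of type $BC_1$/$B_1$ (so $\Sigma = \{\pm\alpha, \pm 2\alpha\}$), and in either case $\mathbf{U} = \mathbf{U}_{(\alpha)}$ and $\mathbf{V} = \mathbf{U}_{(-\alpha)}$ are the two opposite maximal unipotent subgroups. The torus $A$ acts on $U$ by conjugation with strictly positive weight (the eigenvalues of $\Ad(a)$ on $\mathfrak{u}$ are $\alpha(a)$ and possibly $2\alpha(a)$, all of which blow up as $\alpha(a) \to \infty$ inside $A^+$). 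The whole lemma is a statement about how this expansion/contraction interacts with convergence, and I would exploit a faithful linear representation of $\mathbf{G}$ to make the estimates explicit.

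\medskip

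\noindent\textbf{Part a).} First I would fix an embedding $\mathbf{G} \hookrightarrow \mathbf{GL}(W)$ and choose a highest-weight vector $w_0 \in W$ for the ordering determined by $\Sigma^+$, so that $U$ fixes the line $[w_0] \in \mathbb{P}(W)$ while $V$ moves it. The key geometric fact is that $VUA$ is the big Bruhat cell, and the product map $V \times U \times A \to G$ is an isomorphism onto an open set; moreover the $P = UA$-orbit of $[w_0]$ is the single point $[w_0]$, whereas $v \mapsto v\cdot[w_0]$ parametrizes an open neighborhood of $[w_0]$ in the flag variety. Writing $g_n = v_n u_n a_n \to g$, I would apply $g_n$ to $[w_0]$: since $u_n a_n$ fixes $[w_0]$, we get $g_n \cdot [w_0] = v_n \cdot [w_0]$, and convergence of $g_n$ forces $v_n \cdot [w_0]$ to converge; by properness of the Bruhat parametrization on the open cell, $v_n$ converges (hence is bounded). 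But then $u_n a_n = v_n^{-1} g_n$ converges too. The remaining task is to separate the bounded behavior of $u_n$ from $a_n$: here I would use that on the line through $w_0$, the factor $a_n$ acts by the highest weight character while $u_n$ acts trivially, so $a_n$ restricted to appropriate weight data is controlled by the convergent sequence; then $u_n = (a_n)(a_n^{-1}u_n a_n^{-1}\cdots)$—more cleanly, once $u_n a_n$ converges, comparing the action on $[w_0]$ (which isolates $a_n$) with the action on a lowest-weight line (which detects $u_n$) pins down $u_n$ as bounded. The point is that $A$ normalizes $U$ with positive weights, so an unbounded $u_n$ could only be masked by an $a_n$ contracting $U$, i.e. $\alpha(a_n)\to 0$; but convergence of $g_n$ in $G$ (equivalently boundedness of both $g_n$ and $g_n^{-1}$) prevents $a_n$ from degenerating.

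\medskip

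\noindent\textbf{Part b).} For the second part the obstruction is that now the left factor is $b_n \in A$ rather than $v_n \in V$, so the clean Bruhat-cell argument does not directly apply—multiplying by $b_n$ on the left can move the highest-weight line. The remedy, and the reason a specific $v_0 \in V$ must be inserted, is to choose $v_0$ generic so that $v_0 \cdot [w_0]$ lies in the open $B^+$-orbit, equivalently so that $v_0$ conjugates the stabilizer into general position; concretely I would pick $v_0$ such that $v_0^{-1} A v_0$ and the fixed data of $U$ are in general position, forcing the element $h_n = b_n v_0 u_n a_n$ to expose $u_n$ on a weight line that neither $b_n$ nor $a_n$ (both in the torus) can absorb. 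I would then repeat the representation-theoretic estimate: evaluate $h_n$ and $h_n^{-1}$ on suitable highest- and lowest-weight vectors, use Lemma~\ref{sec:2.4} to read off the singular-value growth, and conclude that boundedness of $h_n$ in $G$ (so of $\mu(h_n)$) together with the generic position of $v_0$ forces $u_n$ bounded.

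\medskip

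\noindent\textbf{The main obstacle} I expect is part b): unlike part a), one cannot simply read $u_n$ off a single line, because two torus factors flank the unipotent term and can conspire to hide its growth. The whole force of the statement lies in choosing $v_0$ cleverly enough that this conspiracy is impossible—i.e. that the generic $V$-translate breaks the degeneracy so that any blow-up of $u_n$ necessarily blows up $\mu(h_n)$, contradicting convergence. Verifying that a generic $v_0$ works is a Zariski-density argument (the bad $v_0$ form a proper closed subset), and pinning down the precise weight-line computation that makes the estimate quantitative is where the real work resides.
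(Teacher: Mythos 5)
There is a genuine gap in part a), and it propagates to part b). Your key intermediate claim --- that convergence of $g_n\cdot[w_0]=v_n\cdot[w_0]$ forces $v_n$ to converge ``by properness of the Bruhat parametrization on the open cell'' --- is false. The orbit map $v\mapsto v\cdot[w_0]$ is indeed a homeomorphism onto the open cell, but the limit of $v_n\cdot[w_0]$ may lie on the \emph{boundary} of that cell, in which case $v_n\to\infty$. Already in $\SL(2,\mathbb{R})$ take
\[
g_n=\begin{pmatrix}1&0\\-\lambda_n^{-1}&1\end{pmatrix}
\begin{pmatrix}1&\lambda_n\\0&1\end{pmatrix}
\begin{pmatrix}\lambda_n&0\\0&\lambda_n^{-1}\end{pmatrix}
=\begin{pmatrix}\lambda_n&1\\-1&0\end{pmatrix},
\qquad \lambda_n\to 0:
\]
the products $g_n$ converge while $v_n$ and $a_n$ both blow up; only $u_n$ stays bounded, which is exactly what the lemma asserts and exactly what your scheme cannot reach, since you first bound $v_n$ and then deduce that $u_na_n$ (hence $u_n$) converges. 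Your closing heuristic that convergence of $g_n$ ``prevents $a_n$ from degenerating'' fails for the same reason. A telling symptom is that your argument for a) nowhere uses $\rank_k(\mathbf{G})=1$, whereas the statement is \emph{false} in higher rank (the paper's remark gives an explicit counterexample in $\SL(3,k)$), so any correct proof must use rank one in an essential way.

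The paper's mechanism is different and worth internalizing: apply $\Ad g_n$ to the element $H_o\in\mathfrak{a}$ with $d\alpha(H_o)=2$ and project onto the top restricted weight space. Since $\Ad(a_n)$ fixes $H_o$, since $u_n=e^{X_n}$ contributes $-2X_n$ (plus a term in $\mathfrak{g}_{2\alpha}$ in the non-reduced case), and since the left factor $v_n\in V$ can only push weights \emph{down}, one reads off $p_\alpha(\Ad g_n(H_o))=-2X_n$ directly, with no control whatsoever on $v_n$ or $a_n$. For b) the same computation works after inserting $v_0=e^{Y}$ with $Y$ in the lowest root space and projecting onto $\mathfrak{g}_0$, which yields $H_o-2\ad Y(X_n)$; rank one enters through the Jacobson--Morozov/$\mathfrak{sl}(2)$-module theory, which makes $\ad Y$ injective on $\mathfrak{g}_\alpha$ (resp.\ on $\mathfrak{g}_{2\alpha}$). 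Your instinct in b) --- choose $v_0$ generic so as to ``expose'' $u_n$ --- points in the right direction but is not backed by a workable estimate; note also that the non-reduced case $\Sigma=\{\pm\alpha,\pm2\alpha\}$ needs an extra ingredient (properness of the quadratic map $X\mapsto(\ad X)^2\circ p_0$), which your sketch does not address.
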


\begin{remarks*}~
\begin{itemize}
\item The condition ``$u_n$ bounded in~$U$'' means that the sequence~$u_n$ remains in a compact subset of~$U$. It is likely that in both cases, the sequence~$u_n$ converges. We shall not need this fact.
\item This lemma is false for $\mathbf{G} = \mathbf{SL}(3, k)$, which shows the necessity of the reduction to the case of $k$-rank one. Indeed, take (with $t = t_n \to 0$):
\[g_n = 
\begin{pmatrix}
1 & 0 & 0 \\
0 & 1 & 0 \\
t^{-1} & -t^{-2} & 1
\end{pmatrix}
\cdot
\begin{pmatrix}
1 & t^{-1} & 0 \\
0 & 1 & t^2 \\
0 & 0 & 1
\end{pmatrix}
\cdot
\begin{pmatrix}
t & 0 & 0 \\
0 & t & 0 \\
0 & 0 & t^{-2}
\end{pmatrix}
=
\begin{pmatrix}
t & 1 & 0 \\
0 & t & 1 \\
1 & 0 & 0
\end{pmatrix}.\]
\end{itemize}
\end{remarks*}

Let us start by showing why this lemma implies Theorem~4.1. For the same reasons as in~\ref{sec:4.3}, we may assume that $\mathbf{H} = \mathbf{A}\mathbf{U}' = \mathbf{U}'\mathbf{A}$ where $\mathbf{U}'$~is a unipotent $k$-subgroup of codimension~$1$ in~$\mathbf{U}$ which contains $[\mathbf{U}, \mathbf{U}]$ and that $N$~is the group of $k$-points of a maximal connected nilpotent $k$-subgroup $\mathbf{N}$ of~$\mathbf{G}$. Up to conjugating by an element of~$G$, there are only two possibilities for~$\mathbf{N}$: $\mathbf{N} = \mathbf{V}$ or $\mathbf{N} = \mathbf{A}$ (remember that $\mathbf{V}$ and~$\mathbf{U}$ are conjugate over~$k$). The quotient $U/U'$ is homeomorphic to~$k$, hence we may choose a sequence~$u_n$ in~$U$ such that the image of this sequence in~$U/U'$ tends to infinity.

~

\underline{1st case:} $\mathbf{N} = \mathbf{V}$. Suppose by contradiction that $V \backslash G / U'A$ is quasicompact. Extracting if necessary a subsequence, the image of the sequence~$u_n$ in this quotient converges (to a limit that might not be unique!). We may then find sequences $v_n$ in~$V$, $u'_n$ in~$U'$ and $a_n$ in~$A$ such that the sequence $g_n := v_n u_n u'_n a_n$ converges in~$G$. The previous lemma proves that the sequence $u_n u'_n$ is bounded in~$U$. Contradiction.

~

\underline{2nd case:} $\mathbf{N} = \mathbf{A}$. We proceed in the same fashion. Suppose by contradiction that $A \backslash G / U'A$ is quasicompact. Extracting if necessary a subsequence, the image of the sequence~$v_0 u_n$ in this quotient converges. We may then find sequences $b_n$ in~$A$, $u'_n$ in~$U'$ and $a_n$ in~$A$ such that the sequence $h_n := b_n v_0 u_n u'_n a_n$ converges in~$G$. The previous lemma proves that the sequence $u_n u'_n$ is bounded in~$U$. Contradiction.

\subsection{The sequence~$u_n$}
\label{sec:4.5}

It remains to prove Lemma~\ref{sec:4.4}. The Lie algebra~$\mathbffrakg$ has a decomposition defined on~$k$
\[\mathbffrakg = \mathbffrakg_{-2\alpha} \oplus \mathbffrakg_{-\alpha} \oplus \mathbffrakg_{0} \oplus \mathbffrakg_{\alpha} \oplus \mathbffrakg_{2\alpha}.\]
We denote by $\mathbffrakg_k$ the Lie algebra of~$G$. We say that $\Sigma$~is reduced if $\mathbffrakg_{\pm 2\alpha} = 0$. For $\beta$ in~$\Sigma \cap \{0\}$, we call $p_\beta$ the projection onto~$\mathbffrakg_\beta$ parallel to the other subspaces~$\mathbffrakg_{\beta'}$; we call $(\mathbffrakg_\beta)_k$ the intersection of~$\mathbffrakg_\beta$ with~$\mathbffrakg_k$. We call $H_o$ the element of $\mathbffraka := \operatorname{Lie}(\mathbf{A})$ such that $d \alpha(H_o) = 2$. For $p$ in~$\mathbb{Z}$, $\mathbffrakg_{p \alpha}$~is also the eigenspace of~$\ad H_o$ for the eigenvalue~$2p$.

The following lemma is a variant of the Jacobson-Morozov theorem. It holds for any graded semisimple Lie algebra on a field of characteristic zero.

\begin{lemma*}
($\characteristic(k) = 0$) Let $X$ be a nonzero element of $(\mathbffrakg_{q\alpha})_k$ with $q \neq 0$. Then there exists an $\SL(2)$-triple $(Y, H, X)$ with $Y$ in~$(\mathbffrakg_{-q\alpha})_k$ and $H$ in~$(\mathbffrakg_0)_k$.
\end{lemma*}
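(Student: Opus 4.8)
The claim is a graded version of the Jacobson–Morozov theorem: given a nonzero $X \in (\mathbffrakg_{q\alpha})_k$ with $q \neq 0$, we must produce an $\SL(2)$-triple $(Y, H, X)$ with $H \in (\mathbffrakg_0)_k$ and $Y \in (\mathbffrakg_{-q\alpha})_k$. The plan is to first invoke the \emph{ordinary} Jacobson–Morozov theorem to obtain \emph{some} $\SL(2)$-triple $(Y_0, H_0, X)$ in the semisimple Lie algebra $\mathbffrakg_k$, and then to show that this triple can be adjusted so that $H$ and $Y$ land in the prescribed graded pieces. The adjoint element $H_o \in \mathbffraka$ with $d\alpha(H_o) = 2$ gives $\mathbffrakg$ the structure of a $\mathbb{Z}$-graded Lie algebra via the eigenspaces of $\ad H_o$ (eigenvalue $2p$ on $\mathbffrakg_{p\alpha}$), and the point is that $X$ is homogeneous of degree $2q$ in this grading. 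The standard fact to lean on is that when the element generating a triple is homogeneous for a grading, the triple can be chosen to be adapted to that grading.

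\textbf{Main argument.} First I would apply Jacobson–Morozov to the nilpotent element $X$ (nilpotency of $X$ follows because $\ad X$ raises the $H_o$-degree by $2q \neq 0$, so it is nilpotent) to get a triple $(Y_0, H_0, X)$ with $[H_0, X] = 2X$, $[H_0, Y_0] = -2Y_0$, $[X, Y_0] = H_0$. The idea is then to replace this triple by one compatible with the grading. Decompose $Y_0 = \sum_p (Y_0)_{p}$ into $\ad H_o$-eigencomponents, where $(Y_0)_p \in \mathbffrakg_{p\alpha}$. Since $[X, Y_0] = H_0$ and $X$ has degree $2q$, matching degrees shows that the degree-$0$ component of $H_0$ is $[X, (Y_0)_{-q}]$, suggesting that the relevant piece of $Y_0$ is its $(\mathbffrakg_{-q\alpha})_k$-component. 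The cleanest route, rather than hand-adjusting components, is to use the conjugacy/graded refinement: because $X$ is homogeneous of nonzero degree, a theorem on $\SL(2)$-triples in graded Lie algebras (the graded Jacobson–Morozov statement, which holds over any field of characteristic zero) guarantees that we may choose $H$ homogeneous of degree $0$, i.e.\ $H \in (\mathbffrakg_0)_k$, and consequently $Y$ homogeneous of degree $-2q$, i.e.\ $Y \in (\mathbffrakg_{-q\alpha})_k$. One produces such an $H$ by averaging or by the uniqueness-up-to-conjugacy of the neutral element within the $\SL(2)$: the neutral elements $H$ completing $X$ to a triple form a single orbit under the unipotent group with Lie algebra $\mathbffrakg^X \cap (\text{positive part})$, and within that orbit exactly one representative is $\ad H_o$-homogeneous of degree $0$, namely the one commuting with $H_o$.

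\textbf{Rationality and the hard part.} The descent from an algebraically closed field to $k$ is handled because Jacobson–Morozov is valid over any field of characteristic zero and all the data ($X$, the grading by $H_o$, the bracket) are defined over $k$; hence the constructed $H$ and $Y$ are $k$-rational, landing in $(\mathbffrakg_0)_k$ and $(\mathbffrakg_{-q\alpha})_k$ respectively. The main obstacle I anticipate is the grading-compatibility step: verifying that the neutral element $H$ of the triple can be taken to lie exactly in degree $0$. The way to overcome this is to observe that $H_o$ itself acts on $X$ with $[H_o, X] = 2q \, X$, so $\tfrac{1}{q} H_o$ acts on $X$ the same way as $H$ does; the difference $H - \tfrac{1}{q}H_o$ then commutes with $X$ and lies in the centralizer, and one uses $\mathfrak{sl}_2$-representation theory (the centralizer of $X$ inside the triple's action) together with the semisimplicity of $\ad H_o$ to conjugate $H$ into the zero-graded piece by an element of the unipotent radical of the centralizer, keeping $X$ fixed. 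Once $H \in (\mathbffrakg_0)_k$, the relation $[H, Y] = -2Y$ combined with $[H_o, \cdot]$ forces $Y$ into the $(-2q)$-eigenspace $\mathbffrakg_{-q\alpha}$, and $k$-rationality of $Y$ is automatic since $Y = [H, X]/(\text{scalar})$ through the triple relations, completing the proof.
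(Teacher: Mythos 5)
Your overall strategy --- reduce to the ordinary Jacobson--Morozov theorem and then force the triple to respect the $\ad H_o$-grading --- is the right one, but the execution has genuine gaps at exactly the points that need an argument. The central step, producing a neutral element $H$ that is homogeneous of degree~$0$ and still completable to a triple, is outsourced to an unnamed ``graded Jacobson--Morozov theorem'' and, alternatively, to a conjugation by the unipotent centralizer of~$X$ that you only gesture at; neither is carried out, and the \emph{existence} (not uniqueness) of a degree-$0$ representative among the neutral elements is precisely what has to be shown. Two of your concluding assertions are also wrong or insufficient: $Y$ is not given by ``$[H,X]/(\text{scalar})$'' (that bracket equals $2X$, and $Y$ is determined by $(H,X)$ only through an injectivity argument, not a formula), and the relation $[H,Y]=-2Y$ with $H\in(\mathbffrakg_0)_k$ does not by itself force $Y$ to be $\ad H_o$-homogeneous --- in the generality of the lemma $H$ need not be proportional to~$H_o$ (the identity $H=q^{-1}H_o$ is derived only afterwards, in the rank-one Remark, from the fact that $H$ spans the Lie algebra of a $k$-split torus).

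The paper avoids all of this with two projection steps interleaved with the standard proof. Following Bourbaki, one first finds $H$ in $\ad X(\mathbffrakg_k)$ with $[H,X]=2X$; since $X$ is homogeneous of degree $q$, applying $p_0$ preserves both conditions (if $H=[X,Z]$ then $p_0(H)=[X,p_{-q\alpha}(Z)]$, and comparing graded components of $[H,X]=2X$ gives $[p_0(H),X]=2X$), so one may take $H\in(\mathbffrakg_0)_k$ \emph{before} completing the pair to a triple $(Y,H,X)$. Then, because $H$ has degree $0$ and $X$ degree $q$, replacing $Y$ by $p_{-q\alpha}(Y)$ preserves all three triple relations. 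If you want to salvage your version, this is the missing lemma: the conditions characterizing a neutral element, and then the triple relations on~$Y$, are each stable under the relevant graded projection --- no conjugation or external graded theorem is needed.
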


\begin{proof}
We initially follow the proof of the Jacobson-Morozov theorem (see for example \cite{Bou}~VIII.11.2). Since $X$~is nilpotent, we may find $H$ in~$\ad X(\mathbffrakg_k)$ such that~$[H, X] = 2X$. Replacing if necessary $H$ by~$p_0(H)$, we may assume that $H$ is in~$(\mathbffrakg_0)_k$. We may then complete $(H, X)$ to an $\SL(2)$-triple $(Y, H, X)$ (loc. cit.). Replacing if necessary $Y$ by~$p_{-q\alpha}(Y)$, we may assume that $Y$~is in~$(\mathbffrakg_{-q\alpha})_k$.
\end{proof}

\begin{remark*}
In our situation $\mathbf{G}$~has $k$-rank~$1$. It follows since $H$ generates the Lie algebra of a $k$-split torus that $H$~is in~$\mathbffraka$ and $d\alpha(H) = 2/q$. Hence $H = q^{-1}H_o$. When $q > 0$, the theory of $\SL(2)$-modules (\cite{Bou}~VIII.1.3) proves that the restriction of~$\ad X$ to $\mathbffrakg_{-2\alpha} \oplus \mathbffrakg_{-\alpha}$ is injective and that $\mathbffrakg_{\alpha} \oplus \mathbffrakg_{2\alpha}$ is contained in the image of~$\ad X$; when $q < 0$, the same statement holds with $\alpha$ and~$-\alpha$ exchanged.
\end{remark*}

\begin{proof}[Proof of Lemma~\ref{sec:4.4} when $\Sigma$~is reduced]
We choose $v_0$ of the form~$e^Y$ with $Y$~a nonzero element of~$(\mathbffrakg_{-\alpha})_k$. Note that the exponential is well-defined because $Y$ is nilpotent. The previous discussion proves that the restriction of $\ad Y$ to~$\mathbffrakg_\alpha$ is injective. We write $u_n = e^{X_n}$ with $X_n$ in~$(\mathbffrakg_\alpha)_k$.
\begin{enumerate}[label=\alph*)]
\item We have the equality
\[p_\alpha(\Ad g_n(H_o))
= p_\alpha(\Ad(v_n u_n a_n)(H_o))
= \ad X_n(H_o)
= -2X_n.\]
Hence the sequence~$X_n$ converges and so does~$u_n$.
\item We have the equality
\[
\displayindent0pt
\displaywidth\textwidth
p_0(\Ad h_n(H_o))
= p_0(\Ad(b_n v_0 u_n a_n)(H_o))
= H_o + \ad Y(\ad X_n(H_o))
= H_o - 2\ad Y(X_n).\]
Hence the sequence~$\ad Y(X_n)$ converges, so does~$X_n$ and so does~$u_n$. \qedhere
\end{enumerate}
\end{proof}

\begin{proof}[Proof of Lemma~\ref{sec:4.4} when $\Sigma$~is not reduced]
We choose $v_0$ of the form~$e^Y$ with $Y$ some nonzero element of~$(\mathbffrakg_{-2\alpha})_k$. As previously, the restriction of~$\ad Y$ to~$\mathbffrakg_{2\alpha}$ is injective. We write $u_n = e^{X_n + Z_n}$ with $X_n$ in~$(\mathbffrakg_\alpha)_k$ and $Z_n$ in~$(\mathbffrakg_{2\alpha})_k$. We set $\psi(X_n) := (\ad X_n)^2 \circ p_0$.
\begin{enumerate}[label=\alph*)]
\item We have the equality
\[p_{2\alpha}(\Ad g_n(H_o))
= p_{2\alpha}(\Ad(v_n u_n a_n)(H_o))
= \frac{1}{2}(\ad X_n)^2(H_o) + \ad Z_n(H_o)
= -4Z_n.\]
Hence the sequence~$Z_n$ converges. More generally, we have the equality
\[p_{2\alpha} \circ \Ad g_n \circ p_0 = \frac{1}{2}\psi(X_n) + \ad Z_n \circ p_0.\]
Hence the sequence $\psi(X_n)$ converges. The lemma below proves that the sequence $X_n$ is bounded, hence so is~$u_n$.
\item We have the equality
\[
\displayindent0pt
\displaywidth\textwidth
p_0(\Ad h_n(H_o))
= p_0(\Ad(b_n v_0 u_n a_n)(H_o))
= H_o + \ad Y(\ad Z_n(H_o))
= H_o - 4\ad Y(Z_n).\]
Hence the sequence~$Z_n$ converges. More generally, we have the equality
\[p_0 \circ \Ad h_n \circ p_0 = \frac{1}{2} \ad Y \circ \psi(X_n) + \ad Y \circ \ad Z_n \circ p_0.\]
Hence the sequence $\psi(X_n)$ converges, $X_n$ is bounded, and so is~$u_n$. \qedhere
\end{enumerate}
\end{proof}
We used the following lemma.
\begin{lemma*}
($\characteristic(k) = 0$, $\rank_k(\mathbf{G}) = 1$ and $\Sigma$ not reduced) The map
\[\fundef{\psi:}{(\mathbffrakg_\alpha)_k}{\Hom_k(\mathbffrakg_0, \mathbffrakg_{2\alpha})}{X}{\psi(X) = (\ad X)^2 \circ p_0}\]
is proper.
\end{lemma*}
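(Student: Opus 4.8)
The plan is to exploit that $\psi$ is homogeneous of degree~$2$ in order to reduce properness to a single algebraic nonvanishing assertion, and then to settle that assertion using the $\SL(2)$-triple produced by the variant of Jacobson--Morozov together with the hypothesis that $\Sigma$ is not reduced. First I would record that $\psi$ is continuous and that $\psi(tX) = t^2\psi(X)$ for all $t \in k$ and $X \in (\mathbffrakg_\alpha)_k$. For such a homogeneous map between finite-dimensional $k$-vector spaces, properness is equivalent to the single condition $\psi^{-1}(0) = \{0\}$. Indeed, if $\psi(X_0) = 0$ for some $X_0 \neq 0$, then the unbounded set $\setsuch{tX_0}{t \in k,\ |t| \geq 1}$ has image $\{0\}$, so $\psi$ is not proper. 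Conversely, assume $\psi$ vanishes only at~$0$; every nonzero $X$ has $\|X\| \in |k^\times|$, so choosing $t$ with $|t| = \|X\|$ we may write $X = tX'$ with $\|X'\| = 1$. The unit sphere $S := \setsuch{X'}{\|X'\| = 1}$ is compact and $\psi$ does not vanish on it, so $\|\psi(X')\| \geq c$ for some $c > 0$, whence $\|\psi(X)\| = |t|^2\,\|\psi(X')\| \geq c\,\|X\|^2$. This tends to infinity as $\|X\| \to \infty$, so preimages of compact sets are bounded and therefore compact.

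It then remains to show that $\psi(X) \neq 0$ for every $X \neq 0$ in $(\mathbffrakg_\alpha)_k$, which is the crux. Here I would apply the variant of Jacobson--Morozov (with $q = 1$) to complete $X$ to an $\SL(2)$-triple $(Y, H, X)$ with $Y \in (\mathbffrakg_{-\alpha})_k$ and, by the remark following that lemma, $H = H_o$. This makes $\mathbffrakg_k$ an $\mathfrak{sl}_2$-module in which $\ad H_o$ is the grading operator and $\ad X$ the raising operator. Since $\Sigma$ is not reduced we have $\mathbffrakg_{2\alpha} \neq 0$, and the same remark (case $q > 0$) shows that $(\mathbffrakg_\alpha)_k \oplus (\mathbffrakg_{2\alpha})_k$ lies in the image of $\ad X$. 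Matching the grading—$\ad X$ carries $(\mathbffrakg_{p\alpha})_k$ into $(\mathbffrakg_{(p+1)\alpha})_k$—this forces both $\ad X \colon (\mathbffrakg_0)_k \to (\mathbffrakg_\alpha)_k$ and $\ad X \colon (\mathbffrakg_\alpha)_k \to (\mathbffrakg_{2\alpha})_k$ to be surjective. Composing, the map $(\ad X)^2 \colon (\mathbffrakg_0)_k \to (\mathbffrakg_{2\alpha})_k$ is surjective onto the nonzero space $(\mathbffrakg_{2\alpha})_k$, so $\psi(X) = (\ad X)^2 \circ p_0 \neq 0$. Combined with the reduction above, this proves that $\psi$ is proper.

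The genuine content, and the step I expect to require the most care, is this nonvanishing of $(\ad X)^2$ on $(\mathbffrakg_0)_k$; the homogeneity reduction is formal. It is precisely the nonreducedness of $\Sigma$—equivalently $\mathbffrakg_{2\alpha} \neq 0$—that makes the nonvanishing true: without it $\psi$ would be identically zero and the statement would be false. I would therefore make sure the $\SL(2)$-module argument isolates cleanly where that hypothesis enters, namely in guaranteeing that the degree-$2\alpha$ component of the image of $\ad X$ is nonzero.
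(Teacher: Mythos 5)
Your proof is correct and takes essentially the same route as the paper: the paper likewise reduces properness to the single condition $\psi^{-1}(0)=\{0\}$ via the elementary claim about continuous degree-$2$ homogeneous maps (whose proof it omits and you supply), and it establishes the nonvanishing by invoking the $\SL(2)$-triple lemma and its remark to conclude that $\mathbffrakg_{2\alpha}\neq 0$ is contained in the image of $(\ad X)^2$ for every nonzero $X$ in $(\mathbffrakg_\alpha)_k$. Your grading argument for why surjectivity of $\ad X$ onto $\mathbffrakg_\alpha\oplus\mathbffrakg_{2\alpha}$ forces surjectivity of each graded piece is a correct filling-in of a step the paper leaves implicit.
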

\begin{proof}
The previous lemma and its remark prove that if $X$ is nonzero in~$(\mathbffrakg_\alpha)_k$, the space $\mathbffrakg_{2\alpha}$ is contained in the image of~$(\ad X)^2$ hence $\psi(X)$~is nonzero. Our lemma is then a consequence of the following elementary claim whose proof we omit.
\end{proof}
\begin{claim*}
Let $E$, $F$ be two finite-dimensional $k$-vector spaces and $\psi: E \to F$ a continuous map that is homogeneous of degree~$2$ (\ie $\psi(\lambda x) = \lambda^2 \psi(x)$ for every $\lambda$ in~$k$ and $x$ in~$E$) and such that $\psi^{-1}(0) = \{0\}$. Then $\psi$ is proper.
\end{claim*}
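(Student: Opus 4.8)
The plan is to prove the quantitative lower bound $\|\psi(x)\| \geq c\,\|x\|^2$ for some constant $c > 0$, from which properness is immediate. The whole point of the degree-$2$ homogeneity is that it lets us reduce the global behaviour of $\psi$ to its restriction to the unit sphere, which is compact.

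First I would fix norms on $E$ and~$F$ as in~\ref{sec:2.1} and consider the unit sphere $S := \{x \in E : \|x\| = 1\}$. Since $k$~is a local field and $E$~is finite-dimensional, $S$~is closed and bounded, hence compact. The function $x \mapsto \|\psi(x)\|$ is continuous on~$S$, and because $\psi^{-1}(0) = \{0\}$ while $0 \notin S$, it is strictly positive on~$S$. By compactness it attains a positive minimum, so there exists $c > 0$ with $\|\psi(u)\| \geq c$ for all $u \in S$.

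Next I would scale an arbitrary nonzero~$x$ onto~$S$. The norm takes its values in $|k^\times| \cup \{0\}$: over $\mathbb{R}$ or~$\mathbb{C}$ this is clear, and in the non-Archimedean case $\|x\| = \sup_i |x_i|$ equals one of the $|x_i|$, which lies in~$|k^\times|$ when $x \neq 0$. Hence there exists $\lambda \in k^\times$ with $|\lambda| = \|x\|$, so that $u := \lambda^{-1} x$ lies in~$S$. Homogeneity then gives
\[\|\psi(x)\| = \|\psi(\lambda u)\| = |\lambda|^2\,\|\psi(u)\| = \|x\|^2\,\|\psi(u)\| \geq c\,\|x\|^2.\]

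Finally, properness follows at once: given a compact subset~$L$ of~$F$, it is bounded, say contained in a ball of radius~$R$, so $\psi^{-1}(L) \subset \{x : c\,\|x\|^2 \leq R\}$ is bounded; being also closed as the preimage of a closed set under a continuous map, it is compact (closed bounded subsets of a finite-dimensional space over a local field are compact). The only step that requires care is the scaling in the non-Archimedean case, where one must invoke that the norm takes its values in the value group~$|k^\times|$ in order to scale a vector \emph{exactly} onto the unit sphere; the remainder is the standard compactness argument.
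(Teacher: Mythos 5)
Your proof is correct. The paper explicitly omits its own proof of this claim (``whose proof we omit''), so there is nothing to compare against; your argument --- compactness of the unit sphere over a local field, a positive minimum of $\|\psi\|$ there, and exact scaling onto the sphere using that the sup-norm takes values in $|k^\times|$ in the non-Archimedean case --- is the standard elementary argument the author surely had in mind, and you correctly flag the one point (realizing $\|x\|$ as $|\lambda|$ for some $\lambda \in k^\times$) that distinguishes the non-Archimedean case from the real one.
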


\section{Properness criterion}
\label{sec:5}

The goal of this section is to prove Theorem~\ref{sec:5.2} which generalizes Proposition~\ref{sec:1.5}. We reuse the notations from~\ref{sec:2.2}.

\subsection{Inclusion modulo compacts}
\label{sec:5.1}

\begin{proposition*}
Let $\mathbf{G}$ be a simply-connected semisimple $k$-group, $G := \mathbf{G}_k$ and $\mu: G \to A^+$ a Cartan projection. Then for every compact subset~$L$ of~$G$, there exists a compact subset~$M$ of~$A$ such that for every $g$ in~$G$, we have $\mu(L g L) \subset \mu(g) M$.
\end{proposition*}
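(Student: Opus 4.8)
The plan is to reduce everything to a uniform bound on the single element $b := \mu(g)^{-1}\,\mu(\ell_1 g \ell_2)$, which lies in the abelian group $A$ for each choice of $\ell_1,\ell_2 \in L$ and $g \in G$. Concretely, it suffices to produce a compact $M \subset A$, depending only on $L$ (not on $g$), with $b \in M$ for all such data; then $\mu(\ell_1 g \ell_2) \in \mu(g)M$, which is exactly the claim $\mu(LgL) \subset \mu(g)M$. To control $b$ I would read it off through its values under the characters $\omega_1,\dots,\omega_r$ supplied by Lemma~\ref{sec:2.3}, translating multiplication of group elements into the norm estimates of Lemma~\ref{sec:2.4}.

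First I would fix, for each $i$, the irreducible representation $\rho_i$ with highest $k$-weight $\omega_i$ from Lemma~\ref{sec:2.3}, together with a norm on $V_i$. Submultiplicativity of the operator norm gives $\|\rho_i(\ell_1 g \ell_2)\| \le \|\rho_i(\ell_1)\|\,\|\rho_i(g)\|\,\|\rho_i(\ell_2)\|$, and writing $\rho_i(g) = \rho_i(\ell_1)^{-1}\rho_i(\ell_1 g \ell_2)\rho_i(\ell_2)^{-1}$ gives the matching lower bound. Since $L$ (hence $L^{-1}$) is compact, the quantities $\sup_{\ell\in L}\|\rho_i(\ell)\|$ and $\sup_{\ell\in L}\|\rho_i(\ell^{-1})\|$ are finite, so there are constants $0 < c_i \le C_i$ depending only on $L$ and $\rho_i$ with
\[
\frac{\|\rho_i(\ell_1 g \ell_2)\|}{\|\rho_i(g)\|} \in [c_i, C_i]
\quad\text{for all } g \in G,\ \ell_1,\ell_2 \in L.
\]
Feeding this into Lemma~\ref{sec:2.4} (applied to $\ell_1 g \ell_2$ and to $g$) converts the norm ratio into a two-sided bound on $|\omega_i(\mu(\ell_1 g \ell_2))|/|\omega_i(\mu(g))| = |\omega_i(b)|$, at the cost of replacing $c_i,C_i$ by $c_i C_{\omega_i}^{-2}$ and $C_i C_{\omega_i}^{2}$. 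Thus every $|\omega_i(b)|$ is sandwiched between fixed positive constants.

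Finally I would conclude by properness of the map $\Phi : A \to \mathbb{R}^r$, $a \mapsto (\log|\omega_i(a)|)_{1\le i\le r}$. Because the $\omega_i$ form a basis of $E = X^*(\mathbf{A})\otimes_\mathbb{Z}\mathbb{R}$, they span a finite-index sublattice of $X^*(\mathbf{A})$, so $\Phi$ differs by an invertible rational linear change of coordinates from the map $a \mapsto (\log|\chi_j(a)|)_j$ through a full basis of characters; the latter is proper with compact kernel (the maximal compact subgroup of $A$), since for a torus over a local field $a \mapsto (|\chi_j(a)|)_j$ confines $a$ to a product of compact annuli once its values are bounded away from $0$ and $\infty$. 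Hence the set $M := \setsuch{b \in A}{|\omega_i(b)| \in [c,C]\ \forall i}$, with $c := \min_i c_i C_{\omega_i}^{-2}$ and $C := \max_i C_i C_{\omega_i}^{2}$, is compact and depends only on $L$. By the previous paragraph $b \in M$ always, which finishes the argument.

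The only step requiring genuine care is this last properness claim: that a two-sided bound on each $|\omega_i(b)|$ forces $b$ into a compact subset of $A$. Everything else is a mechanical combination of operator-norm submultiplicativity, compactness of $\rho_i(L)$, and Lemma~\ref{sec:2.4}. I would therefore spend the detailed writing on justifying the finite-index-sublattice reduction to the split-torus model $A \cong (k^\times)^r$, where $\Phi$ becomes a linear isomorphism of $\mathbb{R}^r$ post-composed with the proper valuation map $(k^\times)^r \to \mathbb{R}^r$, and note that the resulting $M$ is manifestly independent of $g$.
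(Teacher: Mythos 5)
Your proposal is correct and follows essentially the same route as the paper: reduce via Lemma~\ref{sec:2.3} to controlling the values $|\omega_i(\mu(\ell_1 g \ell_2))|\,|\omega_i(\mu(g))|^{-1}$, convert these to norm ratios via Lemma~\ref{sec:2.4}, and bound the ratios by submultiplicativity with the constant $\sup_{\ell\in L}\|\rho_i(\ell)\|^2$. The only difference is that you make explicit the final properness step (that bounding all $|\omega_i(b)|$ confines $b$ to a compact subset of $A$), which the paper leaves implicit in the phrase ``By Lemma~\ref{sec:2.3}, it suffices to show\dots''; your justification of that step is sound.
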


\begin{proof}
Let us fix a compact subset~$L$ of~$G$ such that $L = L^{-1}$. Let $(V, \rho)$ be an irreducible representation of~$G$ with highest $k$-weight~$\chi$. By Lemma~\ref{sec:2.3}, it suffices to show that there exists a constant $C > 1$ such that for every $g$ in~$G$ and $l_1$, $l_2$ in~$L$, we have
\[C^{-1} \leq |\chi(\mu(l_1 g l_2))| \cdot |\chi(\mu(g))|^{-1} \leq C.\]

According to Lemma~\ref{sec:2.4}, it suffices to find a constant $C' > 1$ such that for every $g$ in~$G$ and $l_1$, $l_2$ in~$L$, we have
\[C'^{-1} \leq \|\rho(l_1 g l_2)\| \cdot \|\rho(g)\|^{-1} \leq C'.\]

We may take $C' = \sup_{l \in L} \|\rho(l)\|^2$.
\end{proof}

\subsection{Properness criterion}
\label{sec:5.2}

\begin{theorem*}
Let $k$~be a local field, $\mathbf{G}$ a simply-connected semisimple $k$-group, $G := \mathbf{G}_k$ and $\mu: G \to A^+$ a Cartan projection.

Let $H_1$, $H_2$ be two closed subsets of~$G$. The pair~$(H_1, H_2)$ is $G$-proper if and only if the pair~$(\mu(H_1), \mu(H_2))$ is $A$-proper.
\end{theorem*}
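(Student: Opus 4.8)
The plan is to prove both implications by converting $G$-properness and $A$-properness into statements about boundedness of sequences, using two standing facts: the Cartan projection $\mu$ is continuous and proper (so a subset of $G$ is relatively compact if and only if its image under $\mu$ is bounded in $A$, and moreover $\mu$ is a closed map), and in a locally compact group the product of a compact set with a closed set is again closed. Throughout I work from the Cartan decomposition $G = KA^+K$ with $K$ compact.

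For the direction ``$(\mu(H_1),\mu(H_2))$ is $A$-proper $\Rightarrow$ $(H_1,H_2)$ is $G$-proper'', I would fix a compact $L\subset G$ and invoke Proposition~\ref{sec:5.1} to produce a compact $M\subset A$ with $\mu(LgL)\subset\mu(g)M$ for every $g\in G$. Then for any $h\in H_1\cap LH_2L$, a factorization $h=l_1 h_2 l_2$ yields $\mu(h)\in\mu(h_2)M\subset\mu(H_2)M$; enlarging $M$ so that $e\in M$ and using commutativity of $A$, this places $\mu(h)$ in $\mu(H_1)\cap M\mu(H_2)M$, which is a compact set $C$ by hypothesis. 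Hence $H_1\cap LH_2L\subset\mu^{-1}(C)$, which is compact since $\mu$ is proper; as $LH_2L$ is closed, the intersection is closed, hence compact.

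For the converse I would fix a compact $M\subset A$ and take $a\in\mu(H_1)\cap M\mu(H_2)M$, so that $a=\mu(h_1)$ for some $h_1\in H_1$ and $a=m_1 b m_2$ with $m_i\in M$ and $b=\mu(h_2)$ for some $h_2\in H_2$. The idea is to lift this factorization in the abelian group $A$ to one in $G$ using the Cartan decomposition: writing $h_1=k_1 a k_2$ and $h_2=k_3 b k_4$ with $k_i\in K$, substitution gives $h_1=(k_1 m_1 k_3^{-1})\,h_2\,(k_4^{-1}m_2 k_2)$, so that $h_1\in LH_2L$ with $L:=KMK$ compact. Thus $h_1\in H_1\cap LH_2L$, which is compact by $G$-properness, and $a=\mu(h_1)$ lies in the compact set $\mu(H_1\cap LH_2L)$. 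Since $\mu(H_2)$ is closed (because $\mu$ is a continuous proper, hence closed, map), the set $M\mu(H_2)M$ is closed, and therefore $\mu(H_1)\cap M\mu(H_2)M$ is a closed subset of a compact set, hence compact.

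The substantive input is Proposition~\ref{sec:5.1}, which already packages the delicate comparison between $\|\rho(l_1 g l_2)\|$ and $\|\rho(g)\|$; granting it, the essential move is the elementary observation that flanking the compact set $M\subset A$ by the compact group $K$ keeps everything compact in $G$, and that properness of $\mu$ transfers boundedness back and forth between $G$ and $A$. The main obstacle I anticipate is therefore not conceptual but a matter of bookkeeping: verifying that the relevant sets ($LH_2L$, $M\mu(H_2)M$, and $\mu(H_2)$) are closed, so that ``contained in a compact set'' upgrades to ``compact''. These closedness claims rest precisely on $\mu$ being a closed proper map and on the fact that a compact set times a closed set is closed in a topological group.
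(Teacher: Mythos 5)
Your proof is correct and follows essentially the same route as the paper: the direction from $A$-properness to $G$-properness is exactly the paper's argument via Proposition~\ref{sec:5.1}, and the converse direction is the explicit unwinding of what the paper gets by citing Lemma~\ref{lemma_3.1.2} together with the observation that the Cartan decomposition places $\mu(H_j)$ inside $KH_jK$. Your extra care about closedness of $\mu(H_j)$, $LH_2L$ and $M\mu(H_2)M$ fills in details the paper leaves implicit, but introduces no new ideas.
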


The following corollary is a reformultation of this theorem for subgroups, made possible by Lemma~\ref{lemma_3.1.1}.

\begin{corollary*}
We keep the notations of the theorem. Let $H_1$, $H_2$ be two closed subgroups of~$G$. The group~$H_1$ acts properly on~$G/H_2$ if and only if for every compact subset~$M$ of~$A$, the set $\mu(H_1) \cap \mu(H_2)M$ is compact.
\end{corollary*}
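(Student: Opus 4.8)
The final statement is the Corollary, which I must deduce from the Theorem of~\ref{sec:5.2} that immediately precedes it, using Lemma~\ref{lemma_3.1.1}. Both of these are available to me as proven results. The deduction is essentially a matter of unwinding definitions and stitching together two equivalences, so my proof will be short.

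First I would recall the three-way translation I need. Since $H_1$ and $H_2$ are now assumed to be closed \emph{subgroups} (not merely closed subsets), $H_1$ is in particular a closed subsemigroup of~$G$, so Lemma~\ref{lemma_3.1.1} applies and gives the equivalence
\[
(H_1, H_2) \text{ is } G\text{-proper} \iff H_1 \text{ acts properly on } G/H_2.
\]
This converts the geometric hypothesis ``$H_1$ acts properly on~$G/H_2$'' into the combinatorial hypothesis ``$(H_1,H_2)$ is $G$-proper.'' The Theorem of~\ref{sec:5.2} then converts this in turn into ``$(\mu(H_1),\mu(H_2))$ is $A$-proper.'' Chaining the two equivalences yields
\[
H_1 \text{ acts properly on } G/H_2 \iff (\mu(H_1),\mu(H_2)) \text{ is } A\text{-proper}.
\]

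It then remains only to spell out what $A$-properness of $(\mu(H_1),\mu(H_2))$ means and to match it with the condition in the corollary's statement. By the Definition in~\ref{sec:3.1} applied to the locally compact group~$A$, the pair $(\mu(H_1),\mu(H_2))$ is $A$-proper precisely when, for every compact subset~$L$ of~$A$, the intersection $\mu(H_1) \cap L\,\mu(H_2)\,L$ is compact. Because $A$ is abelian, the two-sided translate $L\,\mu(H_2)\,L$ equals $\mu(H_2)\,L L = \mu(H_2)\,M$ where $M := L L$ is a compact subset of~$A$; conversely any compact $M$ arises this way (\eg take $L$ containing both $\{e\}$ and $M$). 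Hence the quantifier ``for every compact $L$, the set $\mu(H_1)\cap L\mu(H_2)L$ is compact'' is equivalent to ``for every compact $M$, the set $\mu(H_1)\cap\mu(H_2)M$ is compact,'' which is exactly the criterion stated in the corollary. I would close the argument there.

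\textbf{Where the difficulty lies.} There is genuinely no hard step remaining, since all the analytic content has been discharged by the Theorem. The only point requiring the smallest amount of care is the passage from two-sided translates $L(\cdot)L$ to one-sided translates $(\cdot)M$, which is legitimate only because $A$ is commutative; this commutativity is what lets me absorb both translation factors into a single compact set. I would state this explicitly rather than leave it implicit, since it is precisely the feature of $A$ (as opposed to the nonabelian~$G$) that makes the corollary's cleaner formulation possible.
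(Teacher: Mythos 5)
Your proposal is correct and follows exactly the route the paper intends: the paper gives no separate proof, stating only that the corollary is a reformulation of the theorem for subgroups made possible by Lemma~\ref{lemma_3.1.1}, and your chain (Lemma~\ref{lemma_3.1.1}, then the theorem, then unwinding $A$-properness using commutativity of~$A$ to replace $L(\cdot)L$ by $(\cdot)M$) is precisely that reformulation spelled out.
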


\begin{remark*}
Let us give a geometric interpretation of this criterion when $k = \mathbb{R}$. Endow~$A$ with an $A$-invariant Riemannian metric and denote by~$d$ the corresponding distance. The criterion is that $(H_1, H_2)$ is $G$-proper if and only if for every $R > 0$, the set of pairs of points $(a_1, a_2)$ in $\mu(H_1) \times \mu(H_2)$ such that $d(a_1, a_2) \leq R$ is compact. In other terms, \emph{$\mu(H_1)$ and $\mu(H_2)$ get infinitely far apart from each other when approaching infinity.}
\end{remark*}

\begin{proof}
Suppose first that $(H_1, H_2)$ is $G$-proper. By definition, for~$j = 1, 2$, $\mu(H_j)$~is contained in~$H_j$ modulo the compacts of~$G$ (see~\ref{sec:3.1}); we also have the opposite inclusion... but we will not need it. Lemma~\ref{lemma_3.1.2} proves that the pair~$(\mu(H_1), \mu(H_2))$ is $G$-proper, hence it is $A$-proper.

Conversely, suppose that $(\mu(H_1), \mu(H_2))$ is $A$-proper. Let $L$~be a compact subset of~$G$ such that $L = K L K$ and $M$~be a compact subset of~$A$ as given by Proposition~\ref{sec:5.1}. We have the inclusion
\[\mu(H_1 \cap L H_2 L) \subset \mu(H_1) \cap \mu(H_2) M.\]
Since $\mu$~is proper, $H_1 \cap L H_2 L$ is compact and $(H_1, H_2)$~is $G$-proper.
\end{proof}

\subsection{Examples}
\label{sec:5.3}

Here are a few particular cases of this theorem.

The first one is due to Kobayashi (\cite{Kob1}~4.1) when $k = \mathbb{R}$.

\begin{corollary}
\label{corollary_5.3.1}
With the notations of~\ref{sec:2.2}. Let $\mathbf{G}$ be a semisimple $k$-group, $\mathbf{A}$ a maximal $k$-split torus of~$\mathbf{G}$, $\mathbf{H}_1$ and~$\mathbf{H_2}$ two reductive $k$-subgroups of~$\mathbf{G}$. For $j = 1, 2$, we denote by~$\mathbf{A}_j$ a maximal $k$-split torus of~$\mathbf{H}_j$. We suppose that the $\mathbf{A}_j$ are contained in~$\mathbf{A}$ (we easily reduce the problem to this case, by conjugating the~$\mathbf{H}_j$ by some element of~$G$).

We then have the equivalence:
\[H_1 \text{ acts properly on } G/H_2 \iff \forall w \in W,\; A_1 \cap w A_2 \text{ is finite.}\]
\end{corollary}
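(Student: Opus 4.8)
The plan is to feed the reductive structure of $H_1$ and $H_2$ into the properness criterion of Theorem~\ref{sec:5.2} and then reduce everything to a statement of convex geometry in the vector space attached to~$A$.

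First I would apply Theorem~\ref{sec:5.2} together with Lemma~\ref{lemma_3.1.1}: the action of $H_1$ on $G/H_2$ is proper if and only if the pair $(\mu(H_1),\mu(H_2))$ is $A$-proper. Next I replace $\mu(H_j)$ by $\mu(A_j)$. Since $\mathbf{H}_j$ is reductive with maximal $k$-split torus $\mathbf{A}_j$, the Cartan decomposition of $H_j$ reads $H_j = K_j A_j K_j$, with $K_j$ a maximal compact subgroup of $H_j$, hence compact in $G$. As $A_j \subset H_j$ we get $\mu(A_j) \subset \mu(H_j)$, while Proposition~\ref{sec:5.1} applied with the compact set $L = K_j$ yields a compact $M \subset A$ with $\mu(H_j) = \mu(K_j A_j K_j) \subset \mu(A_j) M$. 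Thus $\mu(A_j)$ and $\mu(H_j)$ contain one another modulo the compacts of $A$, so by Lemma~\ref{lemma_3.1.2}, used in both directions, the pair $(\mu(H_1),\mu(H_2))$ is $A$-proper if and only if $(\mu(A_1),\mu(A_2))$ is.

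It remains to analyse $A$-properness of $(\mu(A_1),\mu(A_2))$. I would identify $A$ modulo its maximal compact subgroup with a lattice in the real vector space $\mathfrak{a} := X_*(\mathbf{A}) \otimes_{\mathbb{Z}} \mathbb{R}$ (via $\log$ when $k=\mathbb{R}$, via the valuations of characters otherwise); then $W$ acts linearly on $\mathfrak{a}$, the chamber $A^+$ corresponds to a closed fundamental cone $\mathfrak{a}^+$, and each $A_j$ corresponds to the subspace $\mathfrak{a}_j := X_*(\mathbf{A}_j)\otimes_{\mathbb Z}\mathbb R \subset \mathfrak{a}$. For $a \in A$ the point $\mu(a)$ is the unique $W$-translate of $a$ lying in $A^+$, so writing $X^+$ for the dominant representative of the $W$-orbit of $X \in \mathfrak{a}$ we obtain
\[ \mu(A_j) \;\longleftrightarrow\; \bigcup_{w \in W} \bigl(\mathfrak{a}^+ \cap w\,\mathfrak{a}_j\bigr) \;=\; \setsuch{X^+}{X \in \mathfrak{a}_j}. \]
In this picture $A$-properness of $(\mu(A_1),\mu(A_2))$ means exactly that these two finite unions of cones diverge: for every $R$ the set of pairs at distance $\le R$ is bounded. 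Finally one checks that $A_1 \cap w A_2$ is finite if and only if $\mathfrak{a}_1 \cap w\mathfrak{a}_2 = 0$ (a positive-dimensional split torus over an infinite field has infinitely many points, so finiteness of the intersection of $k$-points is equivalent to $\mathbf{A}_1$ and $w\mathbf{A}_2$ meeting in a finite group). The corollary thus reduces to the vector-space statement that $(\mu(A_1),\mu(A_2))$ diverges if and only if $\mathfrak{a}_1 \cap w\mathfrak{a}_2 = 0$ for all $w \in W$.

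The heart of the proof, and the step I expect to be the main obstacle, is this last equivalence. The easy implication is the contrapositive: if $X \in \mathfrak{a}_1 \cap w\mathfrak{a}_2$ is nonzero, then $X \in \mathfrak{a}_1$ and $w^{-1}X \in \mathfrak{a}_2$ lie in the same $W$-orbit, so they share the dominant representative $X^+ \neq 0$; hence $X^+$ lies in both $\mu(A_1)$ and $\mu(A_2)$, and the whole ray $\mathbb{R}_{\ge 0}\,X^+$ lies in the intersection, which prevents divergence. For the converse I would argue by contradiction: a failure of divergence gives sequences $s_1^{(n)} \in \mu(A_1)$, $s_2^{(n)} \in \mu(A_2)$ with $|s_1^{(n)} - s_2^{(n)}|$ bounded and $|s_1^{(n)}| \to \infty$. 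As the unions over $W$ are finite, after passing to a subsequence $s_1^{(n)} \in \mathfrak{a}^+ \cap w_1\mathfrak{a}_1$ and $s_2^{(n)} \in \mathfrak{a}^+ \cap w_2\mathfrak{a}_2$ for fixed $w_1,w_2$; normalising by $|s_1^{(n)}|$ and passing to a further subsequence produces a common unit limit direction $X \in (\mathfrak{a}^+ \cap w_1\mathfrak{a}_1) \cap (\mathfrak{a}^+ \cap w_2\mathfrak{a}_2)$, using that $\mathfrak{a}^+$ and the subspaces are closed. Then $w_1^{-1}X$ is a nonzero element of $\mathfrak{a}_1 \cap (w_1^{-1}w_2)\mathfrak{a}_2$, contradicting the hypothesis with $w = w_1^{-1}w_2$. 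The delicate points to get right are the limiting and compactness argument in the cone picture, and the uniform treatment of the non-Archimedean case, where $\log$ is replaced by the valuation map and $A$ carries a compact part that must be quotiented out before passing to $\mathfrak{a}$.
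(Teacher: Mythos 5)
Your proposal is correct and follows essentially the same route as the paper: reduce from $H_j$ to $A_j$ via the Cartan decomposition of $H_j$ and Lemma~\ref{lemma_3.1.2}, apply Theorem~\ref{sec:5.2}, and then exploit the identity $\mu(A_j)=\bigcup_{w\in W}(wA_j\cap A^+)$ to translate $A$-properness into finiteness of the intersections $A_1\cap wA_2$. The only differences are cosmetic (you apply the properness criterion before rather than after replacing $H_j$ by $A_j$, and you spell out the final cone-divergence argument that the paper leaves implicit).
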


\begin{proof}
We may assume $\mathbf{G}$ to be simply connected. By the Cartan decomposition for~$H_j$, the group~$H_j$ is contained in~$A_j$ modulo the compacts of~$G$. We consequently have the equivalences:
\begin{align*}
&H_1 \text{ acts properly on } G/H_2 & \\
\iff &A_1 \text{ acts properly on } G/A_2 &\text{(by Lemma~\ref{lemma_3.1.2})} \\
\iff &(\mu(A_1), \mu(A_2)) \text{ is $A$-proper} &\text{(by the theorem)} \\
\iff &\forall w \in W,\; A^0 \cap (A_1 \cap w A_2) \text{ is compact} &\text{(since $\mu(A_j) = \bigcup_{w \in W}(w A_j \cap A^+)$)} \\
\iff &\forall w \in W,\; A_1 \cap w A_2 \text{ is finite.} &\qedhere
\end{align*}
\end{proof}

\begin{remark*}
When $k = \mathbb{R}$, the corollary and its proof still hold under the weaker assumption that $H_i$~is reductive in~$G$ (\ie $H_i$~is a connected closed subgroup of~$G$ such that the adjoint action of~$H_i$ on the Lie algebra of~$G$ is semisimple). This latter formulation is the one used in~\cite{Kob1}.
\end{remark*}

The second corollary is due to Friedland~(\cite{Fr}) when $k = \mathbb{R}$.

\begin{corollary}
\label{corollary_5.3.2}
Let $G = \SL(n, k)$, $H = \SL(m, k) \times I_{n-m}$. A closed subgroup~$\Gamma$ of~$G$ acts properly on~$G/H$ if and only if for every compact subset $C$ of~$k$, the set
\[\Gamma^H_C := \setsuch{g \in \Gamma}{\text{$g$ has $m$ singular values in $C$}}\]
is compact.
\end{corollary}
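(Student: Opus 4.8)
The plan is to deduce this from the corollary to Theorem~\ref{sec:5.2}: the closed subgroup $\Gamma$ acts properly on $G/H$ if and only if for every compact subset $M$ of $A$ the set $\mu(\Gamma) \cap \mu(H) M$ is compact. Since $\mu$ is continuous and proper and $\Gamma$ is closed, this is equivalent to compactness of $\{g \in \Gamma : \mu(g) \in \mu(H) M\}$ in $G$, and each $\Gamma^H_C$ is closed in $\Gamma$ because the singular values $\sigma_i(g)$ depend continuously on $g$ and $C$ is compact (hence closed). Thus matching the two criteria amounts to showing that the two families of subsets of $A^+$, namely $\{\mu(H)M\}_{M}$ and $\{\text{sorted data of elements of }\Gamma^H_C\}_{C}$, are \emph{cofinal}: each member of one is contained, modulo the closedness just noted, in some member of the other. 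Everything therefore reduces to an explicit description of $\mu(H)$.

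So first I would compute $\mu(H)$. An element $h = \operatorname{diag}(h', I_{n-m})$ with $h' \in \SL(m,k)$ is block diagonal, so the singular values of $h$ are those of $h'$ together with the $n-m$ singular values of $I_{n-m}$, all equal to $1$; and as $h'$ ranges over $\SL(m,k)$ the singular values of $h'$ range over all $m$-tuples of product $1$. Hence, identifying $A^+$ with its decreasing singular-value data, $\mu(H)$ is exactly the set of $a \in A^+$ at least $n-m$ of whose singular values equal $1$ (these are the ones forced by the identity block), the remaining ones being free with product~$1$. The set $\Gamma^H_C$ records precisely those $g$ whose singular values controlled by $H$ — the ones trapped near the $1$'s coming from the identity block — lie in the compact set $C$, and this description is the backbone of the argument.

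Next I would pass to the logarithmic picture, where $A$ becomes $\{\sum_i x_i = 0\}$, $A^+$ the sorted cone $\{x_1 \geq \cdots \geq x_n\}$, and $\log\mu(H)$ the polyhedral cone of sorted points having a central block of (at least $n-m$) vanishing coordinates. Multiplying $\mu(H)$ by a compact $M$, say with $\|\log M\|\leq R_0$, describes exactly those sorted $x$ lying within $R_0$ of such a point; because $x$ is already sorted, its near-zero coordinates are automatically consecutive in the middle, so this is equivalent to requiring that the controlled singular values of $g$ lie in a fixed compact neighbourhood $C = C(R_0)$ of $1$ inside $k^{\times}$. This yields the inclusion of each $\{\mu(g)\in\mu(H)M\}$ into some $\Gamma^H_C$, and hence one direction of the equivalence via Lemma~\ref{lemma_3.1.2} and properness of $\mu$.

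The main obstacle will be the reverse cofinality step. Starting from a compact $C$, the condition defining $\Gamma^H_C$ pins down only the controlled singular values, so to land back inside some $\mu(H)M$ I must control the remaining singular values through the constraint $\prod_i \sigma_i(g) = 1$ together with the sorting, and rule out that an element keeps its controlled singular values bounded while escaping every $\mu(H)M$. The delicate point is the behaviour of $C$ near $0$: since $\mu(H)\subset A^+$ forces the matched singular values to stay bounded away from both $0$ and $\infty$, one must check that ``singular values in $C$'' genuinely captures this two-sided boundedness, using the ordering and the product-one relation to turn an upper bound on the relevant $\sigma_i$ into the required lower bound. Once both containments are established, the cofinality of the two families, the properness of $\mu$, and the closedness of $\Gamma^H_C$ together close the proof.
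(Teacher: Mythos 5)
Your overall strategy is the same as the paper's: the printed proof consists entirely of computing $\mu(H)$ and invoking Corollary~\ref{sec:5.2}, and your proposal supplies the cofinality bookkeeping that the paper leaves to the reader. Two points, however. First, a bookkeeping mismatch you let pass silently: your (correct) computation gives $\mu(H) = \setsuch{a \in A^+}{a \text{ has at least } n-m \text{ singular values equal to } 1}$, the $n-m$ coming from the block $I_{n-m}$, so the criterion you actually derive reads ``$n-m$ singular values in $C$'', not ``$m$'' as in the statement (and as in the paper's own displayed formula for $\mu(H)$, which lists $m$ consecutive ones). The roles of $m$ and $n-m$ are interchanged somewhere in the printed corollary; a complete write-up must either prove the statement as written or flag the misprint, rather than leave the contradiction between your formula for $\mu(H)$ and the target statement unresolved.

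Second, and more seriously, the step you yourself single out as ``the delicate point'' is resolved incorrectly. The ordering of the singular values and the relation $\sigma_1(g)\cdots\sigma_n(g)=1$ do \emph{not} convert the upper bound ``the selected singular values lie in a compact subset of $k$'' into the lower bound needed to land in $\mu(H)M$. Concretely, take $n=4$, $m=2$ (so $m=n-m$ and the first issue is moot) and $\Gamma = \setsuch{\operatorname{diag}(s,s,s^{-1},s^{-1})}{s \in k^o}$: this closed subgroup acts properly on $G/H$ (for instance by Corollary~\ref{corollary_5.3.1}, since $A_1 \cap w A_2$ is finite for every $w$), yet for $C$ the closed unit ball the two smallest singular values, both of absolute value $|s|^{-1}\le 1$, lie in $C$ for every $s$, so $\Gamma^H_C = \Gamma$ is not compact. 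Thus the equivalence genuinely fails for compact sets $C$ that approach $0$, and no algebraic identity will rescue it: the statement has to be read with $C$ a compact subset of $k \setminus \{0\}$, i.e.\ with the two-sided condition $1/R \le |\sigma| \le R$, which is how the criterion is phrased in the Example of~\ref{sec:1.5} and in Corollary~\ref{corollary_5.3.3}. With that reading your reverse inclusion $\Gamma^H_C \subset \setsuch{g}{\mu(g) \in \mu(H)M(C)}$ does go through: the selected singular values are pinned in a compact subset of $k^\times$, the sorting lets you replace them by a consecutive block of the same length lying in a slightly larger compact subset of $k^\times$, and the determinant relation controls the complementary block. But the lower bound must enter as a hypothesis on $C$, not as a consequence you extract from the upper bound.
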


\begin{proof}
Let us take the notations of Example~\ref{sec:2.2} and let us write, for $a \in A^+$,
\[a = \begin{pmatrix}
\sigma_1 & & 0 \\
& \ddots & \\
0 & & \sigma_n
\end{pmatrix}.\]
This corollary is a consequence of Corollary~5.2 because the Weyl group is the group of permutations of the coordinates and because we have
\[\mu(H) = \setsuch{a \in A^+}{\exists i \in \{1, \ldots, n-m+1\},\; \sigma_i = \sigma_{i+1} = \cdots = \sigma_{i+m-1} = 1}.\]
\end{proof}

Let us give just three more examples... but we could easily continue the list.

\begin{corollary}
\label{corollary_5.3.3}
The statement of Corollary~\ref{corollary_5.3.2} also holds for $G = \SL(n, k)$ and
\begin{enumerate}[label=\alph*)]
\item $H = \SL(m, k) \times \SL(n-m, k)$, if we take
\[\Gamma^H_C := \setsuch{g \in \Gamma}{\text{there exist $m$ singular values of~$g$ whose product is in~$C$}};\]
\item $n = 2m$ and $H = \Sp(m, k)$, if we take
\[\Gamma^H_C := \setsuch{g \in \Gamma}{\forall i = 1, \ldots, m,\; \sigma_i(g)\sigma_{2m+1-i}(g) \in C};\]
\item $\characteristic(k) \neq 2$ and $H = \SO(b)$ is the stabilizer of a nondegenerate symmetric bilinear form~$b$ of index~$d$, if we take
\[\Gamma^H_C := \setsuch{g \in \Gamma}{\begin{cases}
\forall i = 1, \ldots, d,\; &\sigma_i(g)\sigma_{n+1-i}(g) \in C; \\
\forall j = d+1, \ldots, n-d,\; &\sigma_j(g) \in C.
\end{cases}}.\]
\end{enumerate}
\end{corollary}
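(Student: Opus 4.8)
The plan is to reduce all three cases to the master criterion of Corollary~\ref{corollary_5.3.2}, whose statement and proof are completely parallel: in each case I must identify the subgroup $H$ of $G = \SL(n,k)$, compute the Cartan projection set $\mu(H) \subset A^+$, and then read off exactly which constraint on the singular values $\sigma_i(g)$ characterizes membership in $L\,\mu(H)\,L$ (equivalently, in $\mu(H)\,M$ for a compact $M \subset A$). Since the Weyl group $W$ is the full symmetric group $S_n$ acting by permuting the $\sigma_i$, the $A$-properness of $(\mu(\Gamma), \mu(H))$ via Theorem~\ref{sec:5.2} translates into the condition that the relevant ``product-of-singular-values'' or ``singular-value'' coordinates stay in a compact set of $k$ only for finitely many (or a compact set of) $g \in \Gamma$. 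The technical heart is therefore the computation of $\mu(H)$ for each of the three subgroups.

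For part~(a), with $H = \SL(m,k)\times\SL(n-m,k)$ block-diagonally embedded, I would first note that an element $h \in H$ has its singular values partitioned into those coming from the first $m\times m$ block and those from the second, and that each block, lying in $\SL$, has singular values of product~$1$. Hence $\mu(H)$ consists of those $a = \operatorname{diag}(\sigma_1,\dots,\sigma_n) \in A^+$ for which there is a subset $S$ of size $m$ with $\prod_{i\in S}\sigma_i = 1$ (the complementary product being automatically~$1$ as well). Feeding this into the criterion, membership of $g$ in the ``bad'' set modulo compacts becomes precisely the existence of $m$ singular values of $g$ whose product lies in a fixed compact $C \subset k^o$, which is the stated $\Gamma^H_C$. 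The key subtlety to check is that because we allow \emph{any} $m$-element subset $S$ (the Weyl group realizes all such choices), and because on the $\SL$-blocks the product is pinned to~$1$, the single scalar ``product of a chosen $m$-subset'' is the right coordinate; I would verify that $\mu(H)$ is exactly the set where \emph{some} such product equals~$1$, not merely a single fixed $S$.

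For part~(b), $H = \Sp(m,k)$ inside $\SL(2m,k)$: here the symplectic group preserves a nondegenerate alternating form, which forces the singular values to pair up reciprocally. Concretely $\mu(\Sp(m,k)) = \setsuch{a \in A^+}{\sigma_i\,\sigma_{2m+1-i} = 1 \text{ for all } i = 1,\dots,m}$, because $g \in \Sp$ implies ${}^tg J g = J$ and one computes that the singular values occur in reciprocal pairs $(\sigma_i, \sigma_i^{-1})$, which when ordered decreasingly satisfy $\sigma_i\sigma_{2m+1-i}=1$. Applying the criterion then gives exactly the condition that each product $\sigma_i(g)\sigma_{2m+1-i}(g)$ lies in a compact $C$, matching the stated $\Gamma^H_C$. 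For part~(c), $H = \SO(b)$ with $b$ symmetric nondegenerate of index (Witt index)~$d$: the orthogonal group preserves the symmetric form, so in a suitable basis the $2d$ singular values attached to the hyperbolic part again pair reciprocally, while the remaining $n-2d$ singular values — attached to the anisotropic part on which $H$ acts by an anisotropic orthogonal group with compact $k$-points — are bounded, hence equal to~$1$ in $A^+$. This yields $\mu(H) = \setsuch{a\in A^+}{\sigma_i\sigma_{n+1-i}=1,\ i\le d,\ \text{and } \sigma_j=1,\ d<j\le n-d}$, and the criterion delivers the two-line description of $\Gamma^H_C$.

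The main obstacle I expect is the accurate determination of $\mu(H)$ in cases~(b) and~(c), that is, proving that the singular values of elements of $\Sp(m,k)$ and $\SO(b)$ obey precisely the claimed reciprocal-pairing and boundedness relations, and conversely that every $a\in A^+$ satisfying those relations actually arises as $\mu(h)$ for some $h\in H$. The ``$\subseteq$'' direction is a direct structural computation from the defining bilinear form together with the $\Orth$- or $\Sp$-invariance of the norm used to define $\mu$; the ``$\supseteq$'' direction requires exhibiting, for a given admissible $a$, an honest element of $H$ with that Cartan projection, which one gets from the fact that $H$ itself has a Cartan decomposition whose positive chamber is cut out by exactly these reciprocity conditions (equivalently, that the $\mathbf{A}_j$ of Corollary~\ref{corollary_5.3.1} map onto the stated subset of $A^+$ under $w \mapsto wA_j\cap A^+$). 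Apart from this, everything reduces mechanically to Corollary~\ref{corollary_5.3.2} and its proof, since the translation from $\mu(H)$ to the set $\Gamma^H_C$ is the same bookkeeping in each case.
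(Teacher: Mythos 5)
Your proposal matches the paper's proof: in each case one computes $\mu(H)$ (obtaining exactly the three formulas you state — the $m$-subset product condition, the reciprocal pairing $\sigma_i\sigma_{2m+1-i}=1$, and the pairing-plus-$\sigma_j=1$ condition for the Witt index) and then feeds this into the properness criterion of Section~\ref{sec:5.2}, with the Weyl group acting as the symmetric group on the coordinates. The extra care you devote to both inclusions in the computation of $\mu(H)$ (up to compacts, via the Cartan decomposition of $H$) is exactly the content the paper leaves implicit, so the argument is the same.
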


\begin{proof}
This is a consequence of Corollary~\ref{corollary_5.3.2} since we have, respectively in each of the three cases:
\begin{align*}
\mu(H) &= \setsuch{a \in A^+}{\exists i_1 < \cdots < i_m,\; \sigma_{i_1} \cdots \sigma_{i_m} = 1}; \\
\mu(H) &= \setsuch{a \in A^+}{\forall i = 1, \ldots, m,\; \sigma_i \sigma_{2m+1-i} = 1}; \\
\mu(H) &= \setsuch{a \in A^+}{\begin{cases}
\forall i = 1, \ldots, d,\; &\sigma_i \sigma_{n+1-i} = 1; \\
\forall j = d+1, \ldots, n-d,\; &\sigma_j =1.
\end{cases}}. \qedhere
\end{align*}
\end{proof}

\section{Proximality}
\label{sec:6}

This chapter mostly consists of preliminaries about proximal maps which will play a central role in defining and studying the properties of ``$\eps$-Schottky'' groups in the next chapter.

\subsection{Notations}
\label{sec:6.1}
Let $k$~be a local field, $V$ a finite-dimensional $k$-vector space, $X := \mathbb{P}(V)$ the projective space of~$V$: it is the set of vector lines in~$V$. We endow~$V$ with a norm $\|.\|$, and we define on~$X$ a distance~$d$ by
\[d(x_1, x_2) := \inf \setsuch{\|v_1 - v_2\|}{v_i \in x_i \text{ and } \|v_i\| = 1,\; \forall i = 1, 2}.\]
If $X_1$ and~$X_2$ are two closed subsets of~$X$, we set
\[\delta(X_1, X_2) = \inf \setsuch{d(x_1, x_2)}{x_1 \in X_1, x_2 \in X_2},\]
and we denote by
\[d(X_1, X_2) = \sup \setsuch{\delta(x_i, X_{3-i})}{x_i \in X_i \text{ and } i = 1, 2}\]
the Hausdorff distance between $X_1$ and~$X_2$.

The following lemma will be useful to us.

\begin{lemma*}
For every $\eps > 0$, there exists a constant $r_\eps > 0$ such that, for every hyperplane~$V'$ of~$V$ and for every pair of points $v_1$, $v_2$ of~$V$ of norm~$1$ satisfying $\delta(k v_i, \mathbb{P}(V')) \geq \eps$ for $i = 1, 2$, the number $\alpha \in k$ defined by $v_1 - \alpha v_2 \in V'$ satisfies $r_\eps^{-1} \leq |\alpha| \leq r_\eps$.
\end{lemma*}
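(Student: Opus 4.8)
The plan is to reduce the projective statement to an affine distance-to-hyperplane estimate and then read off $\alpha$ as a ratio of two such distances. First I would fix a nonzero linear functional $\phi$ with $\ker\phi = V'$; since $\delta(kv_i,\mathbb{P}(V')) \geq \eps > 0$ forces $v_i \notin V'$, both $\phi(v_i)$ are nonzero, the relation $v_1 - \alpha v_2 \in V'$ is equivalent to $\phi(v_1) = \alpha\,\phi(v_2)$, and hence $\alpha = \phi(v_1)/\phi(v_2)$ is the unique and nonzero scalar claimed. I would also note at the outset that exchanging $v_1$ and $v_2$ replaces $\alpha$ by $\alpha^{-1}$ while preserving all hypotheses, so a uniform upper bound $|\alpha| \leq r_\eps$ automatically yields the matching lower bound.

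Introduce the affine distance $\rho(v, V') := \inf_{w \in V'}\|v - w\|$. The crucial point is the distance-to-hyperplane formula $\rho(v, V') = |\phi(v)|/\|\phi\|$, where $\|\phi\| = \sup_{\|u\|=1}|\phi(u)|$: the inequality $\geq$ is immediate from $|\phi(v)| = |\phi(v-w)| \leq \|\phi\|\,\|v-w\|$ for $w \in V'$, and $\leq$ follows by subtracting from $v$ the vector $\frac{\phi(v)}{\phi(u)}u$ for $u$ a near-maximizer of $|\phi|$ on the unit sphere, which lies in $V'$ and realizes the value. Dividing the two instances of this formula, the unknown norm $\|\phi\|$ cancels and I obtain the clean identity
\[|\alpha| = \frac{|\phi(v_1)|}{|\phi(v_2)|} = \frac{\rho(v_1, V')}{\rho(v_2, V')}.\]

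It remains to control $\rho(v_i, V')$ by the hypothesis, uniformly in $V'$. On the one hand $\rho(v_i, V') \leq \|v_i\| = 1$. On the other hand I would compare $\rho$ with the projective distance: for unit $\lambda$ and unit $w \in V'$ one has $\|\lambda v - w\| = \|v - \lambda^{-1}w\| \geq \rho(v, V')$, so $\delta(kv, \mathbb{P}(V')) \geq \rho(v, V')$, and conversely a short normalization argument (if the minimizer $w^\ast$ of $\rho$ has $\|w^\ast\|$ bounded away from $0$, normalize it to a competitor for $\delta$; otherwise $\rho$ is already of order $1$) gives $\delta(kv, \mathbb{P}(V')) \leq C\,\rho(v, V')$ for an absolute constant $C$. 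Hence $\rho(v_i, V') \geq \delta(kv_i, \mathbb{P}(V'))/C \geq \eps/C$, and feeding the bounds $\eps/C \leq \rho(v_i, V') \leq 1$ into the displayed identity yields $\eps/C \leq |\alpha| \leq C/\eps$, so $r_\eps := C/\eps$ works (the hypothesis being vacuous once $\eps > C$).

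The main obstacle is precisely this last uniform comparison between the projective distance $\delta(kv, \mathbb{P}(V'))$ and the affine distance $\rho(v, V')$: the constant must not depend on the varying hyperplane $V'$, which is what the normalization case split secures, and one must check it over both the Archimedean and non-Archimedean local fields (compactness of the unit sphere of $V$, valid since $k$ is local, guarantees the relevant infima and suprema are attained). Alternatively, the whole statement follows by a soft compactness argument: the set of triples $(V', v_1, v_2)$ satisfying $\|v_i\|=1$ and $\delta(kv_i,\mathbb{P}(V')) \geq \eps$ is a closed, hence compact, subset of $\mathbb{P}(V^\ast) \times \{\|v\|=1\}^2$, on which $(V',v_1,v_2)\mapsto|\alpha|$ is continuous and strictly positive, so it is bounded above and below; I prefer the explicit route since it furnishes the concrete constant $r_\eps = C/\eps$.
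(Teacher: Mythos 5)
Your proposal is correct, and your closing aside is in fact the paper's entire proof: Benoist disposes of the lemma in one line by compactness of the set of admissible triples $(V', v_1, v_2)$ inside $\mathbb{P}(V^*) \times \{\|v\|=1\}^2$ and continuity of $(V', v_1, v_2) \mapsto |\alpha|$ on that set (a set on which $\phi(v_i) \neq 0$, so $|\alpha|$ is continuous and strictly positive, hence bounded above and below unless the set is empty and the statement vacuous). Your main, explicit route is genuinely different and buys a concrete constant $r_\eps = C/\eps$ with $C$ absolute: the identity $|\alpha| = \rho(v_1,V')/\rho(v_2,V')$ via the distance-to-hyperplane formula, the symmetry $v_1 \leftrightarrow v_2$ reducing everything to a one-sided bound, and the comparison $\delta(kv,\mathbb{P}(V')) \leq C\,\rho(v,V')$ are all sound. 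The one step to make precise is the normalization of the near-minimizer $w^\ast$: you must rescale by a scalar chosen so that $\|w^\ast - w'\|$ is itself controlled by $\rho(v,V')$ --- a positive real scalar works over $\mathbb{R}$ or $\mathbb{C}$, using $\bigl|\,\|w^\ast\|-1\,\bigr| \leq \|v-w^\ast\|$, while over a non-Archimedean field the ultrametric inequality forces $\|w^\ast\|=1$ as soon as $\rho(v,V')<1$, so no rescaling is needed --- since an arbitrary scalar of the correct modulus could displace $w^\ast$ by a distance of order $1$ and ruin the constant. With that point settled, both routes are complete; the paper's is shorter, yours is quantitative and makes the dependence on $\eps$ explicit.
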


\begin{proof}
This follows from compactness of the set of such triples $(V', v_1, v_2)$ and from continuity of the map $(V', v_1, v_2) \mapsto |\alpha|$.
\end{proof}

\subsection{$\eps$-proximality}
\label{sec:6.2}

For $g$ in $\End(V) \setminus \{0\}$, we write $\lambda_1(g) := \sup \setsuch{|\alpha|}{\alpha \text{ eigenvalue of } g}$. Of course an eigenvalue of~$g$ is generally in a finite extension~$k'$ of~$k$. We have implicitly introduced on this extension the unique absolute value that extends the absolute value of~$k$. Note that $\lambda_1(g) \leq \|g\|$.

\begin{definition*}
The element~$g$ is said to be proximal if it has a unique eigenvalue~$\alpha$ such that $|\alpha| = \lambda_1(g)$ and if this eigenvalue has multiplicity~$1$. This eigenvalue~$\alpha$ is then in~$k$ and we call $x^+_g \in X$ the corresponding eigenline. We call $v^+_g$ a vector of~$x^+_g$ of norm~$1$, $V^<_g$~the $g$-invariant hyperplane transverse to~$x^+_g$ and $X^<_g := \mathbb{P}(V^<_g)$.
\end{definition*}

The set of proximal maps is an open subset of $\End(V)$, for the norm topology. On this open set, the maps $g \mapsto x^+_g$ and $g \mapsto X^<_g$ are continuous.

In the following definition, we impose a uniform control on proximality. This definition is very close to that of \cite{A-M-S}. We fix $\eps > 0$ and we call
\[b^\eps_g := \setsuch{x \in X}{d(x, x^+_g) \leq \eps};\]
\[B^\eps_g := \setsuch{x \in X}{\delta(x, X^<_g) \geq \eps}.\]
Note that $b^\eps_g$ is contained in~$B^\eps_g$ as soon as $\delta(x^+_g, X^<_g) \geq 2\eps$.

\begin{definition*}
A proximal element $g$ is said to be $\eps$-proximal if $\delta(x^+_g, X^<_g) \geq 2\eps$, $g(B^\eps_g) \subset b^\eps_g$ and $\restr{g}{B^\eps_g}$ is $\eps$-Lipschitz.
\end{definition*}

\begin{remarks*}~
\begin{itemize}
\item The ``$\eps$-Lipschitz'' condition means that for every $x$, $y$ in~$B^\eps_g$, $d(g x, g y) \leq \eps d(x, y)$.
\item If $g$ is $\eps$-proximal then $g^n$ is $\eps$-proximal, for every $n \geq 1$.
\item For every proximal element~$g$ and for every $\eps > 0$ such that $2\eps \leq \delta(x^+_g, X^<_g)$, there exists $n_0 \geq 1$ such that for every $n \geq n_0$, $g^n$ is $\eps$-proximal.
\item However, there exist proximal elements which are not $\eps$-proximal for any value of~$\eps$.
\end{itemize}
\end{remarks*}

Here is a sufficient condition for proximality.

\begin{lemma*}
Let $g$~be in $\End(V) \setminus \{0\}$, $x^+$ in~$\mathbb{P}(V)$, $W$~a hyperplane of~$V$ and $\eps > 0$.  We write $Y := \mathbb{P}(W)$, $b^\eps := \setsuch{x \in X}{d(x, x^+) \leq \eps}$ and $B^\eps := \setsuch{x \in X}{\delta(x, Y) \geq \eps}$. Suppose that $\delta(x^+, Y) \geq 6\eps$, $g(B^\eps) \subset b^\eps$ and $\restr{g}{B^\eps}$ is $\eps$-Lipschitz.

Then $g$ is $2\eps$-proximal, $d(x^+_g, x^+) \leq \eps$ and $d(X^<_g, Y) \leq \eps$.
\end{lemma*}

\begin{proof}
The restriction of~$g$ to~$B^\eps$ is an $\eps$-Lipschitz contraction. It consequently has an attracting fixed point~$x^+_g$. Hence $g$~is proximal. Since $g(B^\eps) \subset b^\eps$, we have $d(x^+_g, x^+) \leq \eps$. Since $B^\eps$ is contained in the basin of attraction of~$x^+_g$, we have $X^<_g \cap B^\eps = \emptyset$, or in other terms $d(X^<_g, Y) \geq \eps$.

We deduce that $\delta(x^+_g, X^<_g) \geq 4\eps$, then that $g(B^{2\eps}_g) \subset g(B^\eps) \subset b^\eps \subset b^{2\eps}_g$ and finally that $\restr{g}{B^{2\eps}_g}$ is $\eps$-Lipschitz. Hence $g$ is $2\eps$-proximal.
\end{proof}

\subsection{Norm and largest eigenvalue}
\label{sec:6.3}

\begin{lemma*}
Let $\eps > 0$. The set of $\eps$-proximal elements of~$\End(V)$ is a closed subset of $\End(V) \setminus \{0\}$.
\end{lemma*}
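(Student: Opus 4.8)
The plan is to show the set is sequentially closed in $\End(V)\setminus\{0\}$: take $\eps$-proximal elements $g_n \to g$ with $g \neq 0$ and prove $g$ is $\eps$-proximal. Since $\mathbb{P}(V)$ and the space of hyperplanes of $V$ are compact, I would first pass to a subsequence so that the attracting points $x^+_{g_n}$ converge to some $x^+ \in \mathbb{P}(V)$ and the invariant hyperplanes $X^<_{g_n}$ converge (in Hausdorff distance) to some $Y = \mathbb{P}(W)$. Writing $b^\eps := \{x : d(x,x^+) \le \eps\}$ and $B^\eps := \{x : \delta(x,Y) \ge \eps\}$, the sets $b^\eps_{g_n}$ and $B^\eps_{g_n}$ then converge to $b^\eps$ and $B^\eps$. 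The separation $\delta(x^+_{g_n}, X^<_{g_n}) \ge 2\eps$ passes to the limit, giving $\delta(x^+, Y) \ge 2\eps$, and in particular $b^\eps \subseteq B^\eps$. The goal becomes to prove that $g$ is proximal with $x^+_g = x^+$ and $X^<_g = Y$, and that it satisfies the three defining inequalities of $\eps$-proximality relative to these data.

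The main obstacle, and really the only delicate point, is to rule out degeneration of the limit: a priori $g$ could vanish on part of $B^\eps$, equivalently the dominant eigenvalues $\alpha_n$ of $g_n$ could tend to $0$, so that $g\,v^+$ collapses and the attracting line falls into $\Ker g$. I would dispose of this using the comparison between norm and largest eigenvalue furnished by the $r_\eps$-lemma of \ref{sec:6.1}. Let $\alpha_n$ be the dominant eigenvalue of $g_n$, so that $g_n v^+_{g_n} = \alpha_n v^+_{g_n}$. For any unit vector $u$ with $[u] \in B^\eps_{g_n}$, decompose $u = c\,v^+_{g_n} + w_0$ with $w_0 \in V^<_{g_n}$; applying the $r_\eps$-lemma to $u$, $v^+_{g_n}$ and the hyperplane $V^<_{g_n}$ gives $r_\eps^{-1} \le |c| \le r_\eps$. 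Since $g_n w_0 \in V^<_{g_n}$, the $v^+_{g_n}$-component of $g_n u = c\alpha_n v^+_{g_n} + g_n w_0$ equals $c\alpha_n$; and because $\eps$-proximality forces $[g_n u] \in b^\eps_{g_n} \subseteq B^\eps_{g_n}$, a second application of the $r_\eps$-lemma bounds that component below, yielding $\|g_n u\| \le r_\eps^2\,|\alpha_n|$. If $\alpha_n \to 0$ along a subsequence, this forces $\|g_n u\| \to 0$ for every fixed direction with $\delta([u],Y) > \eps$; such directions form a nonempty open cone, hence span $V$, so $g$ would vanish on a spanning set, contradicting $g \neq 0$. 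Thus $\liminf |\alpha_n| > 0$; and since $g_n v^+_{g_n} \to g v^+$ with $v^+_{g_n} \to v^+ \neq 0$, the scalars $\alpha_n$ converge to some $\alpha \neq 0$ with $g v^+ = \alpha v^+$. This places $\Ker g$ inside $W$ and keeps $g$ nonvanishing on the interior of $B^\eps$.

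With degeneration excluded, the rest is routine limit-taking. For $x$ with $\delta(x,Y) > \eps$ one has $x \in B^\eps_{g_n}$ for large $n$, so $g_n x \in b^\eps_{g_n}$; since $g x \neq 0$ (its $v^+$-component $c\alpha$ is nonzero, as in the previous paragraph), the projective images converge, $g_n x \to g x$, and letting $n\to\infty$ gives $g x \in b^\eps$. Likewise, for $x,y$ in this open set, $d(g_n x, g_n y) \le \eps\, d(x,y)$ passes to the limit. By continuity both properties extend to the closed set $B^\eps$, so $g(B^\eps) \subseteq b^\eps$ and $\restr{g}{B^\eps}$ is $\eps$-Lipschitz. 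As $b^\eps \subseteq B^\eps$, the map $g$ is an $\eps$-Lipschitz contraction of $B^\eps$ into itself; exactly as in the proof of the Lemma of \ref{sec:6.2}, it then has an attracting fixed point, so $g$ is proximal, and $B^\eps$, lying in the basin of attraction, is disjoint from $X^<_g$.

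Finally I would identify the limit data. Since $g$ is proximal and the maps $g \mapsto x^+_g$, $g \mapsto X^<_g$ are continuous on the open set of proximal elements, $x^+_{g_n} \to x^+_g$ and $X^<_{g_n} \to X^<_g$; by uniqueness of limits $x^+_g = x^+$ and $X^<_g = Y$. The inequality $\delta(x^+,Y) \ge 2\eps$ then reads $\delta(x^+_g, X^<_g) \ge 2\eps$, while $g(B^\eps) \subseteq b^\eps$ and the Lipschitz bound are precisely $g(B^\eps_g) \subseteq b^\eps_g$ and the $\eps$-Lipschitz condition on $B^\eps_g$. Hence $g$ is $\eps$-proximal, proving the set closed. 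Everything here is continuity except the second paragraph: the norm/eigenvalue comparison is the one genuinely necessary step, since it is what prevents the attracting and repelling data of the $g_n$ from collapsing into $\Ker g$ in the limit.
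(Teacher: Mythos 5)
Your proof is correct and follows the same route as the paper's: pass to subsequences so that $x^+_{g_n}$ and $X^<_{g_n}$ converge, transport the defining inequalities to the limit to obtain an $\eps$-Lipschitz contraction of $B^\eps$ into $b^\eps$, deduce proximality of $g$ from the attracting fixed point, and identify the limit data via the continuity of $g \mapsto x^+_g$ and $g \mapsto X^<_g$. The only difference is your second paragraph, which uses the $r_\eps$-lemma of~\ref{sec:6.1} to rule out $\Ker g$ meeting $B^\eps$ (equivalently, the dominant eigenvalues $\alpha_n$ tending to $0$); this makes explicit a point the paper's proof silently elides when it asserts that $g(B^\eps) \subset b^\eps$ and the Lipschitz bound simply ``follow'' in the limit, and is a worthwhile addition since the projective map induced by a singular $g$ need not be defined, let alone Lipschitz, near $\mathbb{P}(\Ker g)$.
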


\begin{proof}
Let $(g_n)$ be a sequence of $\eps$-proximal maps that converges to some nonzero element~$g$. Let us first show that $g$~is proximal. Extracting if necessary a subsequence, we lose no generality in assuming that $\lim_{n \to \infty} x^+_{g_n} = x^+$ and $\lim_{n \to \infty} X^<_{g_n} = Y$, with the latter limit taken with respect to Hausdorff distance. We introduce the notations $b^\eps := \setsuch{x \in X}{d(x, x^+) \leq \eps}$ and $B^\eps := \setsuch{x \in X}{\delta(x, Y) \geq \eps}$. We then have $\lim_{n \to \infty} b^\eps_{g_n} = b^\eps$ and $\lim_{n \to \infty} B^\eps_{g_n} = B^\eps$. It follows that $\delta(x^+, Y) \geq 2\eps$, $g(B^\eps) \subset b^\eps$ and $\restr{g}{B^\eps}$ is an $\eps$-Lipschitz contraction. Hence $g$ has an attracting fixed point~$x^+_g$ in~$b^\eps$. This shows that $g$~is proximal.

The continuity of the maps $g \mapsto x^+_g$ and $g \mapsto X^<_g$ ensures that $x = x^+_g$ and $Y = X^<_g$, hence that $g$~is $\eps$-proximal.
\end{proof}

The following corollary says that for an $\eps$-proximal element~$g$, $\lambda_1(g)$ is a good approximation for the norm of~$g$.

\begin{corollary*}
Let $V$ be a finite-dimensional $k$-vector space and $\eps > 0$. There exists a constant $c_\eps \in (0, 1)$ such that for every $\eps$-proximal linear transformation~$g$ of~$V$, we have
\[c_\eps \|g\| \leq \lambda_1(g) \leq \|g\|.\]
\end{corollary*}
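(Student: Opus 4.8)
The upper bound $\lambda_1(g)\le\|g\|$ is immediate and was already noted, so the plan is to produce a uniform lower bound $c_\eps\|g\|\le\lambda_1(g)$ valid for every $\eps$-proximal~$g$. The crucial observation is that both $\eps$-proximality and the ratio $\lambda_1(g)/\|g\|$ are invariant under rescaling $g\mapsto\mu g$ for $\mu\in k^\times$: indeed $\eps$-proximality involves only the induced action on $X=\mathbb{P}(V)$, which is unchanged, while $\lambda_1(\mu g)=|\mu|\,\lambda_1(g)$ and $\|\mu g\|=|\mu|\,\|g\|$. Hence it suffices to bound $\lambda_1$ from below over the set $S_\eps:=\setsuch{g\in\End(V)}{\|g\|=1\text{ and }g\text{ is }\eps\text{-proximal}}$, reducing the general case to this one by scaling. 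This normalization is available over every local field, since $\|g\|^{-1}\in|k^\times|$ for all $g\neq0$: over $\mathbb{R}$ and~$\mathbb{C}$ because $|k^\times|=(0,\infty)$, and in the non-Archimedean case because the basis norm of a nonzero matrix equals $\max_{i,j}|g_{ij}|\in|k^\times|$, so one may always choose $\mu$ with $\|\mu g\|=1$.

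The next step is to show that $S_\eps$ is compact. The unit sphere $\setsuch{g\in\End(V)}{\|g\|=1}$ is a compact subset of $\End(V)\setminus\{0\}$, because $k$ is locally compact and $\End(V)$ is finite-dimensional over~$k$. By the lemma of~\ref{sec:6.3}, the set of $\eps$-proximal elements is closed in $\End(V)\setminus\{0\}$; intersecting it with the compact sphere shows that $S_\eps$ is compact.

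Finally I would run a compactness argument for $\lambda_1$. On the open set of proximal maps the function $g\mapsto\lambda_1(g)$ is continuous and strictly positive: positivity holds because a proximal $g$ has a simple dominant eigenvalue $\alpha\in k$ with $\lambda_1(g)=|\alpha|$, and this $\alpha$ cannot vanish (a map all of whose eigenvalues are zero is nilpotent and, when $\dim V\ge 2$, has $0$ as an eigenvalue of multiplicity $\dim V>1$, contradicting proximality; the case $\dim V=1$ is trivial); continuity follows from the stated continuity of $g\mapsto x^+_g$, since choosing locally a continuous unit section $v^+_g$ of $x^+_g$ and reading off $g\,v^+_g=\alpha(g)\,v^+_g$ exhibits $\alpha(g)$, hence $\lambda_1(g)=|\alpha(g)|$, as continuous. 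Every element of $S_\eps$ is proximal, so $\lambda_1$ attains a minimum $m>0$ on the compact set $S_\eps$; setting $c_\eps:=\min(m,\tfrac12)\in(0,1)$ gives $c_\eps\le\lambda_1(g)=\lambda_1(g)/\|g\|$ for $g\in S_\eps$, and the scale-invariance of the ratio then yields $c_\eps\|g\|\le\lambda_1(g)$ for every $\eps$-proximal~$g$.

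I expect the genuinely delicate point to be not the compactness bookkeeping but ensuring that $\lambda_1$ does not degenerate in the limit, namely that the dominant eigenvalue stays bounded away from~$0$ on the closure of $S_\eps$. This is exactly what the closedness lemma of~\ref{sec:6.3} provides, since it guarantees that limits of $\eps$-proximal elements remain $\eps$-proximal, hence proximal, so that $\lambda_1$ remains positive in the limit. A secondary subtlety is the validity of the scaling normalization over non-Archimedean fields, which is handled by the remark on the value group above.
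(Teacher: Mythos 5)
Your proof is correct and follows essentially the same route as the paper, which likewise deduces the bound from compactness of the set of $\eps$-proximal elements of norm~$1$ (via the closedness lemma of~\ref{sec:6.3}) together with continuity of $g \mapsto \lambda_1(g)$. You have merely filled in the details the paper leaves implicit, such as the scaling normalization over non-Archimedean fields and the positivity of $\lambda_1$ on proximal maps.
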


\begin{proof}
This follows from compactness of the set of $\eps$-proximal linear transformations of~$V$ having norm~$1$ as well as from continuity of the map $g \mapsto \lambda_1(g)$.
\end{proof}

\subsection{Product of $\eps$-proximal elements}
\label{sec:6.4}

The following proposition gives an approximation of~$\|g\|$ when $g$~is a word whose letters are $\eps$-proximal elements.

\begin{proposition*}
For every $\eps > 0$, there exists a constant $C_\eps > 0$ with the following property. Let $g_1, \ldots, g_l$ be any tuple of $\eps$-proximal linear transformations of~$V$ satisfying (using the convention $g_0 := g_l$):
\[\delta(x^+_{g_{j-1}}, X^<_{g_j}) \geq 6\eps \quad \text{for } j = 1, \ldots, l.\]
Then for any $n_1, \ldots, n_l \geq 1$, the product $g = g_l^{n_l} \cdots g_1^{n_1}$ is $2\eps$-proximal and we have
\[C_\eps^{-l} \leq \lambda_1(g) \cdot \prod_{1 \leq j \leq l} \lambda_1(g_j)^{-n_j} \leq C_\eps^l\]
and
\[C_\eps^{-l-1} \leq \|g\| \cdot \prod_{1 \leq j \leq l} \lambda_1(g_j)^{-n_j} \leq C_\eps^{l+1}.\]
\end{proposition*}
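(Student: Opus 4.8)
The plan is to derive the whole statement from three earlier results: the sufficient condition for proximality of~\ref{sec:6.2}, the norm/largest-eigenvalue comparison of~\ref{sec:6.3}, and the coefficient bound of~\ref{sec:6.1}. First I would extract the geometric meaning of the cyclic transversality hypothesis. Each power $g_j^{n_j}$ is again $\eps$-proximal (remarks of~\ref{sec:6.2}), with the same attracting point $x^+_{g_j}$ and repelling hyperplane $X^<_{g_j}$; thus $g_j^{n_j}(B^\eps_{g_j}) \subset b^\eps_{g_j}$ and $g_j^{n_j}$ is $\eps$-Lipschitz on $B^\eps_{g_j}$. Moreover, if $x \in b^\eps_{g_{j-1}}$ then $\delta(x, X^<_{g_j}) \geq \delta(x^+_{g_{j-1}}, X^<_{g_j}) - \eps \geq 5\eps$, so that $b^\eps_{g_{j-1}} \subset B^\eps_{g_j}$. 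Chaining these inclusions cyclically (with $g_0 = g_l$) shows that $g = g_l^{n_l} \cdots g_1^{n_1}$ sends $B^\eps_{g_1}$ into $b^\eps_{g_l}$ and that $g$, restricted to $B^\eps_{g_1}$, is a composite of $l$ maps each $\eps$-Lipschitz on the relevant region, hence $\eps^l$-Lipschitz and in particular $\eps$-Lipschitz.

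Next I would apply the sufficient condition of~\ref{sec:6.2} with $x^+ := x^+_{g_l}$ and $W := V^<_{g_1}$. Its three hypotheses hold: $\delta(x^+_{g_l}, X^<_{g_1}) \geq 6\eps$ is the $j = 1$ case of the transversality assumption, the inclusion $g(B^\eps_{g_1}) \subset b^\eps_{g_l}$ was just established, and $g$ is $\eps$-Lipschitz on $B^\eps_{g_1}$. This yields at once that $g$ is $2\eps$-proximal, with $d(x^+_g, x^+_{g_l}) \leq \eps$ and $d(X^<_g, X^<_{g_1}) \leq \eps$, settling the first assertion.

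The core of the argument is the eigenvalue estimate, which I would obtain by tracking a single test vector through the product. Set $u_0 := v^+_{g_l}$, so that $k u_0 = x^+_{g_l} \in B^{6\eps}_{g_1}$, and let $u_j$ be the unit vector in the direction of $g_j^{n_j} \cdots g_1^{n_1} u_0$. At each stage $k u_{j-1} \in b^\eps_{g_{j-1}} \subset B^\eps_{g_j}$ sits at projective distance $\geq 5\eps$ from $X^<_{g_j}$; writing $u_{j-1} = \beta_j v^+_{g_j} + w_j$ with $w_j \in V^<_{g_j}$, the coefficient bound of~\ref{sec:6.1}, applied to the hyperplane $V^<_{g_j}$ and the two unit vectors $u_{j-1}$ and $v^+_{g_j}$ (both far from $X^<_{g_j}$), gives $r_\eps^{-1} \leq |\beta_j| \leq r_\eps$. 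Since $g_j^{n_j} u_{j-1} = \beta_j \alpha_j^{n_j} v^+_{g_j} + g_j^{n_j} w_j$ has, by $\eps$-proximality, direction within $\eps$ of $x^+_{g_j}$, its component $g_j^{n_j} w_j \in V^<_{g_j}$ is subdominant, so that $\|g_j^{n_j} u_{j-1}\|$ equals $|\beta_j| \, \lambda_1(g_j)^{n_j}$ up to a multiplicative factor depending only on $\eps$ and independent of $n_j$. Multiplying these $l$ comparisons gives $\|g u_0\| = \prod_{j} \lambda_1(g_j)^{n_j}$ up to a factor $C_\eps^l$. Finally, since $k u_0 = x^+_{g_l}$ lies within $\eps$ of $x^+_g$, hence at distance $\geq 3\eps$ from $X^<_g$, the same decomposition-and-direction argument applied to the $2\eps$-proximal map $g$ shows that $\|g u_0\|$ and $\lambda_1(g)$ agree up to an $\eps$-factor; combining and enlarging $C_\eps$ yields the bound $C_\eps^{-l} \leq \lambda_1(g) \prod_j \lambda_1(g_j)^{-n_j} \leq C_\eps^l$. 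The norm estimate is then immediate from the corollary of~\ref{sec:6.3}: as $g$ is $2\eps$-proximal, $c_{2\eps} \|g\| \leq \lambda_1(g) \leq \|g\|$, so passing from $\lambda_1(g)$ to $\|g\|$ costs exactly one further power of $C_\eps$ (once $C_\eps \geq c_{2\eps}^{-1}$), giving the bound with exponents $\pm(l+1)$.

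The hard part will be the bookkeeping in the third paragraph: I must check that the factor relating $\|g_j^{n_j} u_{j-1}\|$ to $\lambda_1(g_j)^{n_j}$ is bounded uniformly in the exponent $n_j$. This uniformity is exactly what $\eps$-proximality buys --- the image direction is pulled within $\eps$ of $x^+_{g_j}$ no matter how large $n_j$ is, so the $V^<_{g_j}$-component stays a bounded fraction of the dominant term --- and it is the reason the constants depend only on the word length $l$, not on the individual powers $n_j$.
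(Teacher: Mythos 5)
Your argument is correct and follows essentially the same route as the paper: the same ping-pong chaining of the inclusions $b^\eps_{g_{j-1}} \subset B^\eps_{g_j}$ followed by Lemma~\ref{sec:6.2} to get $2\eps$-proximality, then Lemma~\ref{sec:6.1} applied twice per letter to control the $v^+_{g_j}$-coefficient of a test vector pushed through the word, and finally Corollary~\ref{sec:6.3} to pass from $\lambda_1(g)$ to $\|g\|$. The only (harmless) deviation is that you launch the test vector from $v^+_{g_l}$ rather than from the top eigenvector $v^+_g$ of $g$ itself, which costs you one extra application of Lemma~\ref{sec:6.1} at the end, where the paper instead gets $\|w_l\| = \lambda_1(g)\,\|w_0\|$ for free.
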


\begin{proof}
Let $x^+_j$, $v^+_j$, $X^<_j$, $V^<_j$, $B^\eps_j$, $b^\eps_j$ denote respectively $x^+_{g_j}$, $v^+_{g_j}$ etc. Given that $g_j^n$ is $\eps$-proximal, that $x^+_{g_j^n} = x^+_j$ and that $X^<_{g_j^n} = X^<_j$, we may assume that $n_j = 1$ for every $j = 1, \ldots, l$.

We have the inclusion $g_1(B^\eps_1) \subset b^\eps_1 \subset B^\eps_2$ since $\delta(x^+_1, X^<_2) \geq 2\eps$. Similarly we have $g_2 g_1(B^\eps_1) \subset b^\eps_2$; by iterating, we get that $g(B^\eps_1) \subset b^\eps_l$ and that $\restr{g}{B^\eps_1}$ is $\eps$-Lipschitz. We may apply Lemma~\ref{sec:6.2} since $\delta(x^+_l, X^<_1) \geq 6\eps$. We get that $g$~is $2\eps$-proximal and that $x^+_g \in b^\eps_l$.

Let $w_0 := v^+_g$, $y_0 := x^+_g$ and, for $j = 1, \ldots, l$,
\[w_j := g_j w_{j-1} \text{ and } y_j := g_j y_{j-1}.\]
By construction, we have
\[\begin{cases}
y_j \in b^\eps_j \quad\text{for } j = 0, \ldots, l; \\
w_l = \lambda_1(g)w_0.
\end{cases}\]
For $j = 1, \ldots, l$, let $\alpha_j \in k$ be the number defined by the equality
\[w_{j-1} = \alpha_j v^+_j \text{ modulo } V^<_j.\]
Since $\delta(y_{j-1}, X^<_j) > 5\eps$ and $\delta(x^+_j, X^<_j) \geq 2\eps$, Lemma~\ref{sec:6.1} shows that
\[r_\eps^{-1} \leq \frac{|\alpha_j|}{\|w_{j-1}\|} \leq r_\eps.\]
We also have
\[w_j = \alpha_j \lambda_1(g_j) v^+_j \text{ modulo } V^<_j.\]
Since $\delta(y_j, X^<_j) \geq \eps$, the same Lemma~\ref{sec:6.1} shows that
\[r_\eps^{-1} \leq \frac{|\alpha_j|\lambda_1(g_j)}{\|w_j\|} \leq r_\eps.\]
These two inequalities yield
\[r_\eps^{-2} \leq \frac{\|w_j\|}{\|w_{j-1}\|}\lambda_1(g_j)^{-1} \leq r_\eps^2.\]
Multiplying these $l$~inequalities together and remarking that $\frac{\|w_l\|}{\|w_0\|} = \lambda_1(g)$, we get
\[r_\eps^{-2l} \leq \lambda_1(g) \cdot \prod_{1 \leq j \leq l} \lambda_1(g_j)^{-1} \leq r_\eps^{2l},\]
and then using Corollary~\ref{sec:6.3}
\[r_\eps^{-2l} \leq \|g\| \cdot \prod_{1 \leq j \leq l} \lambda_1(g_j)^{-1} \leq r_\eps^{2l}c_\eps^{-1}.\]
This proves our proposition if we set $C_\eps := \max(r_\eps^2, c_\eps^{-1})$.
\end{proof}

\subsection{$\eps$-Schottky subgroup on~$\mathbb{P}(V)$}
\label{sec:6.5}
The following definition is motivated by Proposition~\ref{sec:6.4}.

\begin{definition*}
Let $\eps > 0$ and $t \geq 2$. We say that a subsemigroup (resp. subgroup) $\Gamma$ of~$\GL(V)$ with generators~$\gamma_1, \ldots, \gamma_t$ is $\eps$-Schottky on~$\mathbb{P}(V)$ if it satisfies the following properties. We set $E := \{\gamma_1, \ldots, \gamma_t\}$ (resp. $E := \{\gamma_1, \ldots, \gamma_t, \gamma_1^{-1}, \ldots, \gamma_t^{-1}\}$).
\begin{enumerate}[label=\roman*)]
\item For any $h$ in~$E$, $h$~is $\eps$-proximal.
\item For any $h$, $h'$ in~$E$ (resp. $h$, $h'$ in~$E$ with $h' \neq h^{-1}$), $\delta(x^+_h, X^<_{h'}) \geq 6\eps$.
\end{enumerate}
\end{definition*}

\begin{remarks*}~
\begin{itemize}
\item Of course, this definition depends on the choice of the generators~$\gamma_j$ and of the norm on~$V$.
\item If the semigroup (resp. group) $\Gamma$ with generators $\gamma_1, \ldots, \gamma_t$ is $\eps$-Schottky on~$\mathbb{P}(V)$, then so is the semigroup (resp. group) $\Gamma_m$ with generators $\gamma_1^m, \ldots, \gamma_t^m$, for every $m \geq 1$.
\item A subgroup~$\Gamma$ with generators $\gamma_1, \ldots, \gamma_t$ that is $\eps$-Schottky on $\mathbb{P}(V)$ is discrete in~$\GL(V)$ and is a free group on these generators $\gamma_1, \ldots, \gamma_t$. This follows from the ping-pong lemma (see \cite{Ti2}~1.1).
\end{itemize}
\end{remarks*}

\section{Schottky groups}
\label{sec:7}

The goal of this section is to prove Theorem~\ref{sec:7.4} which generalizes Proposition~\ref{sec:1.5}. We then deduce Theorem~\ref{sec:7.5} which generalizes Theorem~\ref{sec:1.1}. We finish this section by proving the corollaries from the introduction.

\subsection{Jordan decomposition}
\label{sec:7.1}
Let $\mathbf{G}$ be a semisimple $k$-group with $k$-rank $r \geq 1$ (in other terms $\mathbf{G}$~is isotropic) and $G := \mathbf{G}_k$. In order to simplify some formulations, we shall embed the semigroup~$A^+$ into a salient convex cone with nonempty interior~$A^*$, contained in some $r$-dimensional $\mathbb{R}$-vector space~$A^\bullet$.

When $k = \mathbb{R}$ or~$\mathbb{C}$, we set $A^\bullet := A^o$ and $A^* := A^+$ (see \ref{sec:2.2}). The identification of~$A^\bullet$ with its Lie algebra makes it an $\mathbb{R}$-vector space.

When $k$~is non-Archimedean, $A^o$ is a free $\mathbb{Z}$-module of rank~$r$. We set $A^\bullet := A^o \otimes_\mathbb{Z} \mathbb{R}$ and we define~$A^*$ to be the convex hull of~$A^+$ in~$A^\bullet$.

The goal of this subsection is to define a map $\lambda: G \to A^*$ that we shall call the \emph{Lyapunov projection}. Though it is not strictly necessary, it makes things clearer to treat the Archimedean and non-Archimedean cases separately.

Suppose first that $k = \mathbb{R}$ or~$\mathbb{C}$. This is the easier case. Every element~$g$ of~$G$ has a unique decomposition~$g = g_e g_h g_u$ into a product of three pairwise commuting elements of~$G$, with $g_e$~elliptic, $g_h$~hyperbolic and $g_u$~unipotent (see for example \cite{Kos}~2.1). We set $\lambda(g)$ to be the unique element of~$A^+$ that is conjugate to~$g_h$.

Suppose now that $k$~is non-Archimedean. Then such a decomposition of~$g$ does not always exist. Example: $g = \begin{pmatrix} h & 0 \\ 0 & h^{-1} \end{pmatrix} \in G = \SL(4, \mathbb{Q}_p)$ where $h = \begin{pmatrix} 1 & p-1 \\ 1 & -1 \end{pmatrix}$ has characteristic polynomial $X^2-p$. However, we have the following lemma which is probably well-known.

\begin{lemma*}
There exists $n \geq 1$ such that for every element~$g$ of~$G$, the element $g' := g^n$ has a unique decomposition $g' = g'_e g'_h g'_u$ into a product of three pairwise commuting elements of~$G$, with $g'_e$~elliptic (\ie $\Ad(g'_e)$~is semisimple with eigenvalues of modulus~$1$), $g'_u$~unipotent and $g'_h$~conjugate to an element~$a'$ of~$A^+$. This element $a'$ is unique.
\end{lemma*}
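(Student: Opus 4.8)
The plan is to obtain the decomposition in two stages, refining the ordinary semisimple--unipotent Jordan decomposition, and to show that a single power $n$ repairs the arithmetic defect illustrated by the example. First I would fix $n$ (to be constrained below) and, setting $g' := g^n$, invoke the multiplicative Jordan decomposition $g' = g'_s g'_u$ into commuting semisimple and unipotent parts; this factorization is unique, and the factors are $k$-rational automatically when $\characteristic k = 0$ (local fields of characteristic zero are perfect). When $\characteristic k = p$ the field may be imperfect, but the inseparability obstruction to rationality of $g'_s, g'_u$ is killed once $n$ is divisible by a bounded $p$-power, so I would absorb this into the final exponent. The genuine work is then to split the semisimple factor as $g'_s = g'_h g'_e$ into commuting hyperbolic and elliptic parts inside $G$.

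For this refinement I would use that a semisimple element lies in a maximal $k$-torus: since $g'_s$ is central in the reductive group $Z_{\mathbf{G}}(g'_s)^\circ$, it lies in some maximal $k$-torus $\mathbf{T}$ of $\mathbf{G}$. Write $\mathbf{T} = \mathbf{T}_d \cdot \mathbf{T}_a$ as an almost direct product of its maximal $k$-split subtorus $\mathbf{T}_d$ and its maximal anisotropic subtorus $\mathbf{T}_a$, and consider the valuation homomorphism $v : T := \mathbf{T}_k \to T/T_c$, whose kernel $T_c$ is the maximal bounded subgroup of $T$. The elements of $T_c$ are exactly the elliptic ones: $\Ad(T_c)$ is a compact subgroup of $\GL(\mathfrak{g})$, so its eigenvalues have modulus $1$, while $\mathbf{T}_d$ supplies the hyperbolic elements. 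Now $\mathbf{T}_a(k) \subset T_c$, and since $\mathbf{T}_d \times \mathbf{T}_a \to \mathbf{T}$ is an isogeny, the image of $\mathbf{T}_d(k)$ in the lattice $T/T_c$ has finite index. Thus $g'_s$ in general does not lift to a hyperbolic element of $G$ — this is precisely the phenomenon of the displayed $X^2-p$ example — but $g_s^n$ does, once $n$ annihilates this finite cokernel: then $g'_s = g_s^n = g'_h g'_e$ with $g'_h \in \mathbf{T}_d(k)$ hyperbolic and $g'_e \in T_c$ elliptic. Uniformity of $n$ over all $g$ comes from the finiteness of the number of $G$-conjugacy classes of maximal $k$-tori of $\mathbf{G}$ over a local field: the relevant indices are then bounded, and since the entire construction is conjugation-equivariant, a single common multiple of them (times the $p$-power above) works for every $g$.

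It remains to verify commutativity, uniqueness, and the conjugacy of $g'_h$ into $A^+$. Commutativity of the three factors, and in particular that $g'_h, g'_e$ commute with $g'_u$, I would deduce from canonicity: if the hyperbolic--elliptic splitting of a semisimple element is unique, then for any $m$ in $M := Z_{\mathbf{G}}(g'_s)^\circ$ the pair $(m g'_h m^{-1}, m g'_e m^{-1})$ is again such a splitting of $g'_s$, forcing $g'_h, g'_e \in Z(M)$; as $g'_u \in M$, all three factors pairwise commute. The required uniqueness of the splitting is elementary: if $g'_h g'_e = \tilde{g}_h \tilde{g}_e$ with the stated properties, then $g'_h \tilde{g}_h^{-1} = \tilde{g}_e (g'_e)^{-1}$ is simultaneously hyperbolic and elliptic, hence trivial. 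Finally, lying in a $k$-split torus, $g'_h$ is conjugate into the fixed maximal split torus, hence into $A^o = \bigcup_{w \in W} w A^+$ and thereby into $A^+$; the resulting $a' \in A^+$ is unique because the closed chamber $A^+$ is a fundamental domain for the $W$-action on $A^o$.

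The main obstacle is the passage to a uniform power $n$, which has to accomplish two things at once: realize the hyperbolic part inside $G$ rather than in a ramified extension (the arithmetic defect of the example), and bound the exponents independently of $g$. Both are resolved by the isogeny $\mathbf{T}_d \times \mathbf{T}_a \to \mathbf{T}$ together with the finiteness of conjugacy classes of maximal $k$-tori; the characteristic-$p$ rationality of the Jordan factors is a further, milder contribution to $n$.
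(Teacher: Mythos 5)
Your proposal is essentially correct but follows a genuinely different route from the paper's. The paper works entirely inside a faithful representation $\mathbf{G}\subset\mathbf{SL}(V)$ with $\dim V=d$: after the Jordan decomposition $g=g_sg_u$ (made rational in characteristic $p$ by first replacing $g$ with $g^{p^d}$, which kills $g_u$), it takes the explicit exponent $n=d!$, observes that the eigenvalues of $g'=g^n$ then have moduli in $|\pi|^{\mathbb{Z}}$ (a priori their valuations have denominators dividing $d$), and splits $g'_s=g'_eg'_h$ by grouping eigenspaces according to whether the eigenvalue modulus is $1$ or a nontrivial power of $|\pi|$; the canonicity of this grouping yields rationality of the factors, commutation with $g'_u$, and uniqueness, and the uniqueness of $a'$ is read off from $|\omega_i(a')|=\lambda_1(\rho_i(g'))$. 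You instead invoke the structure theory of tori over local fields: place $g'_s$ in a maximal $k$-torus $\mathbf{T}$, compare $\mathbf{T}_d(k)$ with the lattice $T/T_c$, and bound the resulting finite index uniformly via the finiteness of the number of conjugacy classes of maximal $k$-tori. This is a more conceptual explanation of where the defect in the $X^2-p$ example lives (the cokernel of $\mathbf{T}_d(k)\to T/T_c$), at the price of heavier input (a maximal $k$-torus through a given rational semisimple element, finiteness of torus conjugacy classes --- the latter requiring some care over local fields of positive characteristic, for which the lemma is also asserted), whereas the paper's $n=d!$ is explicit and self-contained.

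Two small points need repair in your version, neither of which undermines the approach. First, an element of $\mathbf{T}_d(k)$ conjugated into the fixed maximal split torus lands in $A$, not in $A^o$: its character values lie in $k^*$ rather than in $\pi^{\mathbb{Z}}$. You must further split $g'_h$ along $k^*\cong\pi^{\mathbb{Z}}\times\mathcal{O}^*$ and absorb the $\mathcal{O}^*$-part (which is bounded, hence elliptic) into $g'_e$; only after this adjustment is $g'_h$ conjugate into $A^o=\bigcup_{w\in W}wA^+$ as your last paragraph asserts. Second, the rearrangement $g'_h\tilde{g}_h^{-1}=\tilde{g}_e(g'_e)^{-1}$ in your uniqueness argument presupposes that the factors of the two splittings commute with one another; this holds because $g'_e$ and $g'_h$ are canonically attached to $g'_s$ (they preserve every $g'_s$-stable subspace), hence commute with anything commuting with $g'_s$ --- the same canonicity the paper records explicitly and also uses to get commutation with $g'_u$.
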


\begin{definition*}
We set $\lambda(g) := \frac{1}{n}a' \in A^*$.
\end{definition*}

It is clear that $\lambda(g)$~does not depend on the choice of~$n$.

\begin{proof}
Let us realize~$\mathbf{G}$ as a $k$-subgroup of~$\mathbf{SL}(V)$ where $V$~is a $k$-vector space of dimension~$d$ whose $k$-weights generate $X^*(\mathbf{A})$.

Let $g$~be an element of~$G$. It has a unique so-called Jordan decomposition $g = g_s g_u$ into a product of two commuting elements of~$\mathbf{G}$, with $g_s$~semisimple and $g_u$~unipotent. We may assume that $g_s$ and~$g_u$ are in~$G$: this always holds when $\characteristic(k) = 0$; if  $\characteristic(k) = p > 0$, it suffices to replace~$g$ by~$g^{p^d}$ since $g^{p^d}_u = 1$.

Let $n := d!$. The moduli of the eigenvalues of $g' := g^n$ are in~$|\pi|^\mathbb{Z}$. Let $g' = g'_s g'_u$ be the Jordan decomposition of~$g'$. Then there exists a unique decomposition of~$g'_s$ into a product $g'_s = g'_e g'_h$ of two commuting semisimple elements of~$\SL(V)$ such that $g'_e$ (resp.~$g'_h$) has eigenvalues of modulus~$1$ (resp. in~$k^o$). By construction, every vector subspace of the algebra $k[V]$ of polynomial functions on~$V$ that is $g'_s$-invariant is also $g'_e$-~and $g'_h$-invariant. It follows that $g'_e$ and~$g'_h$ are in~$G$ and commute with~$g'_u$. The element $g'_h$ is in a $k$-split one-dimensional torus. Hence it is conjugate to an element~$a'$ of~$A$. Since the eigenvalues of~$a'$ are in~$k^o$, $a'$~is in~$A^o$. Replacing if necessary $a'$ by some~$w \cdot a'$ where $w$~is in the Weyl group, the element~$a'$ is in~$A^+$.

Uniqueness of~$a'$ is clear since, with the notations of~\ref{sec:2.3}, for every $i = 1, \ldots, r$, $|\omega_i(a')|$~is the largest among the moduli of the eigenvalues of~$\rho_i(g')$.
\end{proof}

The opposition involution $\iota: A^+ \to A^+$ uniquely extends to an $\mathbb{R}$-linear map from~$A^\bullet$ to itself that preserves~$A^*$, which we shall also denote by~$\iota$. We have the equality, for every~$g$ in~$G$:
\[\lambda(g^{-1}) = \iota(\lambda(g)).\]

\begin{definition*}
We say that a subset~$\Omega$ of~$A^+$ is a convex open cone if $\Omega$~is the intersection of~$A^+$ with some convex open cone~$\Omega^\bullet$ in the $\mathbb{R}$-vector space~$A^\bullet$.

For $g$ in~$G$ such that $\lambda(g) \neq 1$, we call $\Lambda_g$ the half-line of~$A^\bullet$ containing~$\lambda(g)$.
\end{definition*}

\subsection{$\eps$-Schottky groups}
\label{sec:7.2}

We choose $r$ irreducible representations $(V_i, \rho_i)$ of~$G$ whose highest $k$-weights~$\omega_i$ have multiplicity one and are $r$ independent characters of~$\mathbf{A}$ (Lemma~\ref{sec:2.3}). We fix some norm on each of the $k$-vector spaces~$V_i$.

\begin{definition*}
Let $\eps > 0$ and $t \geq 2$. We say that a subsemigroup (resp. subgroup) $\Gamma$ of~$G$ with generators~$\gamma_1, \ldots, \gamma_t$ is $\eps$-Schottky if, for $i = 1, \ldots, r$, the subsemigroup (resp. subgroup) $\rho_i(\Gamma)$ of~$\GL(V_i)$ with generators $\rho_i(\gamma_1), \ldots, \rho_i(\gamma_t)$ is $\eps$-Schottky on $\mathbb{P}(V_i)$.
\end{definition*}

We then set
\[E_\Gamma := \{\gamma_1, \ldots, \gamma_t\}
\quad \text{(resp. }
E_\Gamma := \{\gamma_1, \ldots, \gamma_t, \gamma_1^{-1}, \ldots, \gamma_t^{-1}\}
\text{).}\]
\begin{remarks*}~
\begin{itemize}
\item Of course this definition depends on the choice of the representations~$\rho_i$, of the norms on~$V_i$ and of the generators~$\gamma_j$.
\item The remarks made in~\ref{sec:6.5} for $\eps$-Schottky groups on~$\mathbb{P}(V)$ are also valid for $\eps$-Schottky groups.
\end{itemize}
\end{remarks*}

\begin{definition*}
A word $w = g_l \cdots g_1$ with $g_j$ in~$E_\Gamma$ is said to be reduced if $g_{j-1} \neq g_j^{-1}$ for $j = 2, \ldots, l$, and very reduced if additionally we have $g_1 \neq g_l^{-1}$.
\end{definition*}

When $\Gamma$~is a Schottky subsemigroup, every word is very reduced since, for any $h, h'$ in~$E_\Gamma$, we have $h' \neq h^{-1}$.

The following lemma allows us to construct $\eps$-Schottky semigroups (resp. groups).

\begin{lemma*} ($\mathbf{G}$~is an isotropic semisimple $k$-group) Let $a_1, \ldots, a_j, \ldots$ be elements of~$A^{++}$.
\begin{enumerate}[label=\alph*)]
\item We may choose elements $\gamma_1, \ldots, \gamma_t$ of~$G$ such that, setting $E := \{\gamma_1, \ldots, \gamma_t\}$ (resp. $E := \{\gamma_1, \ldots, \gamma_t, \gamma_1^{-1}, \ldots, \gamma_t^{-1}\}$), we have:
\begin{enumerate}[label=\roman*)]
\item $\lambda(\gamma_j) = a_j$ for every $j = 1, \ldots, t$. In particular, for every $h$ in~$E$ and $i = 1, \ldots, r$, $\rho_i(h)$~is proximal.
\item $x^+_{\rho_i(h)} \not\in X^<_{\rho_i(h')}$ for any $h$, $h'$ in~$E$ (resp. for any $h$, $h'$ in~$E$ with $h' \neq h^{-1}$) and $i = 1, \ldots, r$.
\item The semigroup generated by~$\gamma_j$ is Zariski-connected, for every $j = 1, \ldots, t$.
\item The semigroup~$\Gamma$ generated by~$E$ is Zariski-dense in~$G$.
\end{enumerate}
\item For every such choice, there exists $m_o \geq 1$ and $\eps > 0$ such that for every $m \geq m_o$, the subsemigroup (resp. subgroup) $\Gamma_m$ with generators $\gamma_1^m, \ldots, \gamma_t^m$ is $\eps$-Schottky and Zariski-dense.
\end{enumerate}
\end{lemma*}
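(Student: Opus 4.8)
The plan is to take each $\gamma_j$ to be a conjugate of $a_j$, which forces conditions~(i) and~(iii) for free, and to arrange the transversality~(ii) and the Zariski-density~(iv) as nonempty Zariski-open conditions on the conjugating elements; part~(b) will then be a formal consequence of the fact that passing to a high power makes a proximal map $\eps$-proximal without moving its attracting point or repelling hyperplane.

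For part~(a), I would write $\gamma_j := g_j a_j g_j^{-1}$ with $(g_1,\dots,g_t)$ ranging over the irreducible variety $\mathbf{G}^t$. Since the Lyapunov projection is conjugation-invariant and $\lambda(a_j)=a_j$, condition~(i) holds for every choice; and because $a_j\in A^{++}$ the highest weight $\omega_i$, of multiplicity one by Lemma~\ref{sec:2.3}, strictly dominates every other weight of $V_i$ on $a_j$, so $\rho_i(a_j)$, and hence its conjugate $\rho_i(\gamma_j)$, is proximal with $x^+_{\rho_i(\gamma_j)}=\rho_i(g_j)(V_i)_{\omega_i}$ and $X^<_{\rho_i(\gamma_j)}=\rho_i(g_j)\,\mathbb{P}\bigl(\bigoplus_{\chi\neq\omega_i}(V_i)_\chi\bigr)$; the same holds for $\gamma_j^{-1}$ since $\lambda(\gamma_j^{-1})=\iota(a_j)\in A^{++}$. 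Condition~(iii) also holds for every choice, as $\overline{\langle\gamma_j\rangle}$ is conjugate to $\overline{\langle a_j\rangle}$, a connected diagonalizable group (the eigenvalues of $a_j$ lie in $k^o$ and include no nontrivial root of unity). For~(ii), the relation $x^+_{\rho_i(h)}\in X^<_{\rho_i(h')}$ is Zariski-closed in $(g_1,\dots,g_t)$, and its complement is nonempty because irreducibility of $\rho_i$ prevents the attracting line of $\rho_i(h)$ from being trapped in the invariant hyperplane of $\rho_i(h')$ for generic conjugators (for $h=h'$ transversality is automatic from proximality). Intersecting these finitely many nonempty Zariski-open conditions, over all admissible pairs $h,h'$ and all $i$, together with~(iv) below, yields a nonempty Zariski-open subset of $\mathbf{G}^t$; since $G$ is Zariski-dense in $\mathbf{G}$, the $g_j$ may be chosen in $G$.

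The one substantial point is the nonemptiness of~(iv). I would deduce it from the isotropy and semisimplicity of $\mathbf{G}$: as $a_1\in A^{++}$ is regular with nontrivial projection to every simple factor, its conjugacy class generates $\mathbf{G}$, and a genericity argument (the generic Zariski closure of the semigroup is the largest one and cannot remain a proper subgroup once sufficiently many generic conjugates are included) shows that Zariski-density holds on a nonempty Zariski-open set. This is where the real work lies; the other three conditions are formal.

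For part~(b), fix a choice from~(a). Taking powers moves no projective data: $\rho_i(\gamma_j^m)=\rho_i(\gamma_j)^m$ is proximal with the same $x^+$ and $X^<$ as $\rho_i(\gamma_j)$ (and likewise for inverses). Hence the finitely many numbers $\delta(x^+_{\rho_i(h)},X^<_{\rho_i(h')})$, for $h,h'\in E$ with $h'\neq h^{-1}$, and the self-transversalities $\delta(x^+_{\rho_i(h)},X^<_{\rho_i(h)})$, are positive by~(a) and independent of $m$; I would pick $\eps>0$ at most one sixth of their minimum. Then $2\eps\le\delta(x^+_{\rho_i(h)},X^<_{\rho_i(h)})$ for each proximal $\rho_i(h)$, so by the third remark of~\ref{sec:6.2} there is an $n_0$ with $\rho_i(h)^m$ $\eps$-proximal for $m\ge n_0$; letting $m_o$ be the maximum of these finitely many $n_0$ gives property~(i) of being $\eps$-Schottky on $\mathbb{P}(V_i)$, while property~(ii) holds by the choice of $\eps$. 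Finally, Zariski-density of $\Gamma_m$ follows from~(a): connectedness of $\overline{\langle\gamma_j\rangle}$ forces $\overline{\langle\gamma_j^m\rangle}=\overline{\langle\gamma_j\rangle}\ni\gamma_j$, so $\overline{\Gamma_m}$ contains every $\gamma_j$ and therefore equals $\overline{\Gamma}=\mathbf{G}$.
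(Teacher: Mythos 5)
There is a genuine gap in part (a), at the very step you flag as ``where the real work lies.'' Your generators are pure conjugates $\gamma_j = g_j a_j g_j^{-1}$ of elements of $A^{++}$. But an isotropic semisimple $k$-group may still have anisotropic factors: writing $\mathbf{G} = \mathbf{G}_{\mathrm{an}} \times \mathbf{G}_{\mathrm{is}}$ as in the paper's proof, the torus $\mathbf{A}$ lies in the proper normal subgroup $\mathbf{G}_{\mathrm{is}}$ whenever $\mathbf{G}_{\mathrm{an}} \neq 1$, so every conjugate of every $a_j$ lies in $G_{\mathrm{is}}$ and the semigroup they generate can never be Zariski-dense in $\mathbf{G}$. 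In particular your assertion that $a_1$ ``is regular with nontrivial projection to every simple factor'' fails for the anisotropic factors. The paper's fix is to take $\gamma_j = h_j b a_j h_j^{-1}$ with $b$ a fixed element of $G_{\mathrm{an}}$ that generates a Zariski-connected semigroup and lies in no proper normal $k$-subgroup of $\mathbf{G}_{\mathrm{an}}$; since $\rho_i(G_{\mathrm{an}}) = 1$ for all $i$, this extra factor is invisible to the proximality conditions (i)--(iii), but it is exactly what makes (iv) achievable.

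Even in the case $\mathbf{G}_{\mathrm{an}} = 1$, your argument for (iv) remains a sketch, and it fixes the number $t$ of generators in advance, whereas the statement (note the hypothesis is an infinite list $a_1, \ldots, a_j, \ldots$) is designed so that $t$ is produced by the construction. The paper's mechanism is an induction with a Noetherian stopping argument: if the Zariski closure $G_{j-1}$ of the semigroup generated by $\gamma_1, \ldots, \gamma_{j-1}$ is proper, then $\setsuch{h \in G}{h b a_j h^{-1} \notin G_{j-1}}$ is a nonempty Zariski-open set (were it empty, $b a_j$ would lie in the normal core of $G_{j-1}$, a proper normal subgroup, contradicting the choice of $b$), so one can choose $h_j$ there and strictly enlarge $G_{j-1}$; the strictly increasing chain of Zariski-connected closed subgroups must terminate at $\mathbf{G}$. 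This is the precise content your ``genericity argument'' sentence would need to supply, and it is also not literally an openness-of-density statement for a fixed $t$. The rest of your proposal --- conditions (i)--(iii) for conjugates of elements of $A^{++}$, the nonempty Zariski-open transversality conditions for (ii), and all of part (b) (choosing $\eps$ as one sixth of the minimal separation, $\eps$-proximality of high powers via Remark~\ref{sec:6.2}, and Zariski-density of $\Gamma_m$ from connectedness of the closures of the $\langle\gamma_j\rangle$) --- matches the paper's proof.
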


\begin{proof}~
\begin{enumerate}[label=\alph*)]
\item For every element~$a$ of~$A^{++}$ and $i = 1, \ldots, r$, the elements $\rho_i(a)$ and~$\rho_i(a^{-1})$ are proximal. The points $x^+_i := x^+_{\rho_i(a)}$ and $x^-_i := x^+_{\rho_i(a^{-1})}$, as well as the sets $X^<_i := X^<_{\rho_i(a)}$ and $X^>_i := X^<_{\rho_i(a^{-1})}$, do not depend on the choice of~$a$ in~$A^{++}$. Also the semigroup generated by~$a$ is Zariski-connected.

We may assume that $\mathbf{G}$~is simply connected. We then decompose~$\mathbf{G}$ into a product of a $k$-group $\mathbf{G}_{an}$ that is anisotropic and of a $k$-group $\mathbf{G}_{is}$ that has no anisotropic factor. The group~$A$ is contained in~$G_{is}$ and we have $\rho_i(G_{an}) = 1$ for every $i = 1, \ldots, r$.

We shall consruct by induction on~$j$ elements~$\gamma_j$ satisfying i), ii) and~iii). Let $b$ be an element of~$G_{an}$ that generates a Zariski-connected subsemigroup and that is not contained in any proper normal $k$-subgroup of~$\mathbf{G}_{an}$. It suffices to take $\gamma_j = h_j b a_j h_j^{-1}$ where $h_j$ is in the Zariski-open set
\begin{multline*}
U_j := \left\{ h \in G \;\middle|\; \begin{cases}
\rho_i(h) \cdot x^\alpha_i \not\in \rho_i(h_{j'})(X^<_i \cup X^>_i) \\ 
\rho_i(h_{j'}) \cdot x^\alpha_i \not\in \rho_i(h)(X^<_i \cup X^>_i)
\end{cases} \forall \alpha = \pm,\; \forall i = 1, \ldots, r \right. \\
\left. \text{ and }\;
\forall j' = 1, \ldots, j-1
\vphantom{\begin{cases} X^<_i \\ X^<_i \end{cases}} \right\}.
\end{multline*}
This Zariski-open set is nonempty since $G$~is Zariski-connected and $\rho_i$~is an irreducible representation.

It remains to check iv). Let $G_j$~denote the Zariski-closure in~$G$ of the semigroup generated by $\gamma_1, \ldots, \gamma_j$. If $G_{j-1} \neq G$, we can choose~$h_j$ in the Zariski-open set $U_j \cap V_j$ where $V_j := \setsuch{h \in G}{h b a h^{-1} \not\in G_{j-1}}$. This Zariski-open set is nonempty since $b a$ is not contained in any proper normal $k$-subgroup of~$\mathbf{G}$. The increasing sequence of Zariski-connected and Zariski-closed subgroups~$G_j$ is necessarily stationary. Hence we have $G_j = G$ for sufficiently large~$j$.
\item It suffices to take $\eps$ such that, for every $h, h'$ in~$E$ (resp. $h, h'$ in~$E$ with $h' \neq h^{-1}$) and $i = 1, \ldots, r$, we have
\[6\eps \leq \delta(x^+_{\rho_i(h)}, X^<_{\rho_i(h')}).\]
We then use Remark~\ref{sec:6.2} to deduce that the group~$\Gamma_m$ is $\eps$-Schottky with generators $\gamma_1^m, \ldots, \gamma_t^m$. The Zariski-closure of~$\Gamma_m$ contains each~$\gamma_j$ by iii), hence it is equal to~$G$ by iv). \qedhere
\end{enumerate}
\end{proof}

\subsection{Cartan projection of an $\eps$-Schottky group}
\label{sec:7.3}
Now that we know how to construct $\eps$-Schottky groups, we need to calculate their Cartan projection. So let us assume $\mathbf{G}$ is simply connected ant let $\mu: G \to A^+$ be some Cartan projection.
\begin{proposition*}
For every $\eps > 0$, there exists a compact subset $M_\eps$ of~$A^\bullet$ that has the following property.

For every $\eps$-Schottky subsemigroup (resp. subgroup) $\Gamma$ of~$G$ with generators $\gamma_1, \ldots, \gamma_t$ and for every very reduced word $w = g_l^{n_l} \cdots g_1^{n_1}$ with $g_j$ in~$E_\Gamma$ and $n_j \geq 1$, we have
\[\lambda(w) - \sum_{1 \leq j \leq l} n_j \lambda(g_j) \in l \cdot M_\eps
\quad\text{and}\quad
\mu(w) - \sum_{1 \leq j \leq l} n_j \lambda(g_j) \in (l+1) \cdot M_\eps.\]
\end{proposition*}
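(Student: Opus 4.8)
The plan is to reduce everything to a single scalar estimate inside each of the $r$ fixed representations $\rho_i$, and then read off the conclusion in weight coordinates on~$A^\bullet$. Since the highest weights $\omega_1, \ldots, \omega_r$ form a basis of~$E$, the $\mathbb{R}$-linear functionals $\ell_i$ on~$A^\bullet$ extending $a \mapsto \log|\omega_i(a)|$ are linearly independent, so $\Phi := (\ell_1, \ldots, \ell_r) \colon A^\bullet \to \mathbb{R}^r$ is a linear isomorphism. An element $b \in A^\bullet$ lies in $l \cdot M_\eps$ (resp. $(l+1)M_\eps$), for $M_\eps := \Phi^{-1}([-c_\eps, c_\eps]^r)$, precisely when $|\ell_i(b)| \le l\,c_\eps$ (resp. $\le (l+1)c_\eps$) for all $i$. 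So it suffices to bound $\ell_i\bigl(\lambda(w) - \sum_{1\le j\le l} n_j \lambda(g_j)\bigr)$ and $\ell_i\bigl(\mu(w) - \sum_{1\le j\le l} n_j \lambda(g_j)\bigr)$ for each $i$, with the bound growing linearly in~$l$; a single constant $c_\eps$ depending only on $\eps$ (and the fixed data) will then yield a compact~$M_\eps$ independent of $\Gamma$, the generators, and~$w$.

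First I would record two dictionary identities. By the very construction of the Lyapunov projection in~\ref{sec:7.1}, the largest eigenvalue modulus of $\rho_i(g)$ equals $|\omega_i(\lambda(g))|$, that is,
\[\ell_i(\lambda(g)) = \log \lambda_1(\rho_i(g)) \quad\text{for all } g \in G.\]
By Lemma~\ref{sec:2.4} applied to $(V_i, \rho_i)$, there is a constant $C_{\omega_i} \ge 1$ with
\[\bigl| \ell_i(\mu(g)) - \log\|\rho_i(g)\| \bigr| \leq \log C_{\omega_i} \quad\text{for all } g \in G.\]
Thus, up to the fixed additive errors $\log C_{\omega_i}$, the functionals $\ell_i \circ \lambda$ and $\ell_i \circ \mu$ compute $\log\lambda_1$ and $\log\|\cdot\|$ in the $i$-th representation.

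Next I would feed the word into Proposition~\ref{sec:6.4}. For each~$i$ the elements $\rho_i(g_1), \ldots, \rho_i(g_l)$ are $\eps$-proximal by condition~(i) of the $\eps$-Schottky hypothesis. Because $w$ is \emph{very reduced}, consecutive letters satisfy $g_j \neq g_{j-1}^{-1}$ and also $g_1 \neq g_l^{-1}$; with the cyclic convention $g_0 := g_l$ of Proposition~\ref{sec:6.4}, condition~(ii) of the $\eps$-Schottky hypothesis gives exactly $\delta(x^+_{\rho_i(g_{j-1})}, X^<_{\rho_i(g_j)}) \geq 6\eps$ for $j = 1, \ldots, l$ (in the semigroup case every word is very reduced and the transversality is unconditional). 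Applying Proposition~\ref{sec:6.4} to $\rho_i(w) = \rho_i(g_l)^{n_l} \cdots \rho_i(g_1)^{n_1}$, with its constant $C_\eps$ taken as the maximum over the finitely many~$V_i$, yields
\[\bigl| \log\lambda_1(\rho_i(w)) - \sum_{1\le j\le l} n_j \log\lambda_1(\rho_i(g_j)) \bigr| \leq l \log C_\eps\]
together with the analogous bound for $\log\|\rho_i(w)\|$ and exponent $l+1$ on the right.

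Finally I would translate back through the dictionary. Using $\ell_i(\lambda(g_j)) = \log\lambda_1(\rho_i(g_j))$, the first display says $|\ell_i(\lambda(w) - \sum_j n_j \lambda(g_j))| \leq l \log C_\eps$; for the second, combining the norm bound with the Lemma~\ref{sec:2.4} estimate gives $|\ell_i(\mu(w) - \sum_j n_j \lambda(g_j))| \leq (l+1)\log C_\eps + \log C_{\omega_i}$. Both containments then follow by setting $c_\eps := \log C_\eps + \max_i \log C_{\omega_i}$. The one delicate point is precisely this bookkeeping of constants: the additive error $\log C_{\omega_i}$ incurred in passing between $\mu$ and the representation norm must be absorbed into the $(l+1)$-multiple without inflating it to an $(l+2)$-multiple. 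This works because $l+1 \ge 1$ lets one bound $\log C_{\omega_i} \leq (l+1)\max_i \log C_{\omega_i}$, so $(l+1)\log C_\eps + \log C_{\omega_i} \leq (l+1)c_\eps$. The genuinely conceptual input—the uniformity of $M_\eps$ in the group, the generators, and the word—is inherited wholesale from the uniformity of $C_\eps$ in Proposition~\ref{sec:6.4}, which is the real engine of the argument.
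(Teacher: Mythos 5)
Your proposal is correct and follows essentially the same route as the paper: both proofs reduce to the scalar estimates of Proposition~\ref{sec:6.4} in each representation $\rho_i$, use the identity $|\omega_i(\lambda(g))| = \lambda_1(\rho_i(g))$ together with Lemma~\ref{sec:2.4} to pass between $\mu$, $\lambda$ and the operator norms, and take for $M_\eps$ a box in the weight coordinates on~$A^\bullet$ (you write it additively with logarithms where the paper stays multiplicative, and you are somewhat more explicit than the paper in checking that the very-reduced condition supplies the transversality hypotheses $\delta(x^+_{\rho_i(g_{j-1})}, X^<_{\rho_i(g_j)}) \geq 6\eps$, including the cyclic one). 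The bookkeeping absorbing $\log C_{\omega_i}$ into the $(l+1)$-multiple matches the paper's choice $C'_\eps = C C_\eps$, so there is nothing to correct.
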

\begin{remark*}
We have used additive notation for addition in~$A^\bullet$, even though it extends multiplication in~$A^o$ for which we had used multiplicative notation!
\end{remark*}
\begin{proof}
The morphisms $a \mapsto |\omega_i(a)|$ from $A^+$ to $(0, \infty)$ can be uniquely extended to continuous morphisms from the group~$A^\bullet$ to the multiplicative group~$(0, \infty)$, that we shall denote by~$\theta_i$.

We have for every $g$ in~$G$
\[\theta_i(\lambda(g)) = \lambda_1(\rho_i(g)).\]
Let us denote by~$C_{\omega_i}$ the constants introduced in \ref{sec:2.4} and let $C := \sup_{1 \leq i \leq r} C_{\omega_i}$. We then have, for every $g$ in~$G$ and~$i = 1, \ldots, r$,
\[C^{-1}\|\rho_i(g)\| \leq \theta_i(\mu(g)) \leq C\|\rho_i(g)\|.\]
Let~$C_\eps$~be the constant introduced in~\ref{sec:6.4} for a $k$-vector space with larger dimension than all of the~$V_i$, let $C'_\eps := C C_\eps$ and let us introduce the compact subset of~$A^\bullet$
\[M_\eps := \setsuch{a \in A^\bullet}{C'^{-1}_\eps \leq \theta_i(a) \leq C'_\eps \quad \forall i = 1, \ldots, r}.\]
Our statement is then a consequence of the following upper bounds given by Lemma~\ref{sec:2.4} and Proposition~\ref{sec:6.4}:
\[\theta_i \left( \lambda(w) - \sum_{1 \leq j \leq l} n_j \lambda(g_j) \right) = \frac{\lambda_1(\rho_i(w))}{\prod_{1 \leq j \leq l} \lambda_1(\rho_i(g_j))^{n_j}} \in [C_\eps^{-l},\; C_\eps^l] \quad\text{and}\]
\[\theta_i \left( \mu(w) - \sum_{1 \leq j \leq l} n_j \lambda(g_j) \right) = \frac{\theta_i(\mu(w))}{\|\rho_i(w)\|} \cdot \frac{\|\rho_i(w)\|}{\prod_{1 \leq j \leq l} \lambda_1(\rho_i(g_j))^{n_j}} \in [C^{-1}C_\eps^{-l-1},\; C C_\eps^{l+1}]. \qedhere\]
\end{proof}

\begin{corollary*}
Let $\Gamma$ be an $\eps$-Schottky subsemigroup (resp. subgroup) of~$G$ with generators $\gamma_1, \ldots, \gamma_t$, and let $\Omega^\bullet$~be an open convex cone of~$A^\bullet$ containing the half-lines generated by $\lambda(\gamma_1), \ldots, \lambda(\gamma_t)$ (resp. $\lambda(\gamma_1), \ldots, \lambda(\gamma_t), \lambda(\gamma_1^{-1}), \ldots, \lambda(\gamma_t^{-1})$).

Then there exists $m_0 \geq 1$ such that, for all $m \geq m_0$, the $\eps$-Schottky subsemigroup (resp. subgroup) $\Gamma_m$ of~$G$ with generators $\gamma_1^m, \ldots, \gamma_t^m$ satisfies $\mu(\Gamma_m) \subset \Omega \cup \{1\}$.
\end{corollary*}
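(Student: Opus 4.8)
The plan is to combine the Cartan-projection estimate from the preceding Proposition with the convexity of the cone $\Omega^\bullet$. The Proposition tells us that for any very reduced word $w = g_l^{n_l} \cdots g_1^{n_1}$ in the generators, the Cartan projection $\mu(w)$ differs from the ``linearized'' vector $\sum_{1 \leq j \leq l} n_j \lambda(g_j)$ by an element of $(l+1) \cdot M_\eps$, where $M_\eps$ is a fixed compact set. The key observation is that for the scaled generators $\gamma_j^m$, the Lyapunov projections scale linearly: $\lambda(\gamma_j^m) = m\, \lambda(\gamma_j)$, so the main term $\sum_j n_j \lambda(g_j^m)$ grows like $m$ while the error stays bounded by $(l+1) M_\eps$. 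By choosing $m$ large, the main term dominates and the direction of $\mu(w)$ is forced into $\Omega^\bullet$.

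\textbf{The main steps.} First I would observe that every element of $\Gamma_m$ other than the identity can be written as a very reduced word in the generators $\gamma_1^m, \ldots, \gamma_t^m$ (or their inverses, in the group case); this is where the Schottky structure and the freeness recorded in Remark~\ref{sec:6.5} are used, so that distinct reduced words give distinct elements and we may apply the Proposition to each. Second, writing $S := \sum_{1 \leq j \leq l} n_j\, \lambda(g_j)$ for the main term of such a word, each $\lambda(g_j)$ lies on one of the half-lines generating $\Omega^\bullet$, so $S$ itself lies in the convex cone $\Omega^\bullet$. Third, I would quantify how far $S$ is from the boundary of $\Omega^\bullet$: since $\Omega^\bullet$ is open and the finitely many half-lines $\Lambda_{\gamma_j^{\pm 1}}$ lie strictly inside it, there is a constant $c > 0$ such that $S$ is at distance at least $c \cdot \|S\|$ from $\partial\Omega^\bullet$, and moreover $\|S\| \geq c' \, m \, l$ for some $c' > 0$ because each summand has norm proportional to $m$. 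Fourth, I would bound the error: $\mu(w) - S \in (l+1) M_\eps$ has norm at most $(l+1) D$ where $D := \sup_{a \in M_\eps}\|a\|$. Comparing, once $c' m l \cdot c > (l+1) D$ we conclude $\mu(w) \in \Omega^\bullet$, hence $\mu(w) \in \Omega \cup \{1\}$.

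\textbf{The delicate point.} The subtle issue is that the word length $l$ is unbounded over all of $\Gamma_m$, and both the main term and the error grow with $l$; so one cannot simply fix $m$ after bounding $l$. The resolution is that the relevant comparison is between $c' c\, m\, l$ and $(l+1) D$: both sides are essentially linear in $l$, and dividing through by $l$ one needs roughly $c' c\, m \geq 2D$ (absorbing the $+1$ into a harmless factor for $l \geq 1$). Thus a single threshold $m_0$, depending only on $\eps$, on the cone $\Omega^\bullet$ through the constants $c, c'$, and on $D$, works uniformly for all word lengths. I expect this uniformity in $l$ to be the main obstacle, and it is exactly what the linear-in-$l$ form of the estimate in the Proposition is designed to deliver.

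\textbf{Conclusion.} Choosing $m_0$ so that $m \geq m_0$ forces $c' c\, m \geq 2D$ (say), every nontrivial element $w \in \Gamma_m$ satisfies $\mu(w) \in \Omega^\bullet \cap A^+ = \Omega$, and the identity maps to $1$; hence $\mu(\Gamma_m) \subset \Omega \cup \{1\}$, as required.
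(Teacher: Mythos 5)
Your quantitative argument in the second half --- comparing the main term $S=\sum_j n_j\lambda(g_j)$, which grows like $ml$, against the error term $(l+1)M_\eps$, which grows like $l$ --- is exactly the mechanism of the paper's proof and is correct, modulo the small unstated point that the lower bound $\|S\|\geq c'\,m\,l$ needs the fact that all the $\lambda(g_j)$ lie in the \emph{salient} cone $A^*$ (so that no cancellation between summands can occur). The genuine gap is in your first step, in the group case: it is \emph{not} true that every nontrivial element of $\Gamma_m$ can be written as a very reduced word. In a free group each element has a unique reduced expression, and a very reduced word is in particular reduced; so an element such as $\gamma_1^m\gamma_2^m\gamma_1^{-m}$, whose unique reduced expression has first and last letters inverse to each other, admits no very reduced expression at all. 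Proposition~\ref{sec:7.3} genuinely requires very reducedness --- the cyclic condition $\delta(x^+_{g_l},X^<_{g_1})\geq 6\eps$ (with $g_0:=g_l$) in Proposition~\ref{sec:6.4} is what makes the product proximal and lets $\lambda_1$ approximate the norm --- and the excluded elements cannot be waved away: $\mu(\gamma_1^{mN}\gamma_2^m\gamma_1^{-mN})$ is approximately $mN\lambda(\gamma_1)+m\lambda(\gamma_2)+mN\,\iota(\lambda(\gamma_1))$, which is nowhere near $\mu(\gamma_2^m)$; such conjugate-type words are precisely why the hypothesis in the group case must include the half-lines of the $\lambda(\gamma_j^{-1})$.

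The paper closes this gap as follows: for a reduced word $g=g_l^m\cdots g_1^m$ in $\Gamma_m$, at least one of the three words $w=g$, $g\gamma_1$, $g\gamma_2$ is very reduced and still of the admissible form $g_l^{n_l}\cdots g_1^{n_1}$ with $g_j\in E_\Gamma$, and by Proposition~\ref{sec:5.1} right multiplication by $\gamma_i^{\pm 1}$ moves the Cartan projection only by a fixed compact set $M$. One then applies Proposition~\ref{sec:7.3} to $w$ and absorbs $M$, the contribution $\lambda(\gamma_i)$ of the extra letter, and the error $(l+2)M_\eps$ into a single compact set $M''$ distributed over the $l$ letters, after which your comparison ``$m\lambda(g_j)+M''\subset m\,\Omega^\bullet$ for $m$ large'' goes through verbatim. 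For subsemigroups your argument is complete as written, since there every word is automatically very reduced.
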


\begin{proof}
Let us deal with the case where $\Gamma$~is a group (the case of a semigroup is easier). Let us first of all introduce, using Proposition~\ref{sec:5.1}, a compact subset~$M$ of~$A^\bullet$ such that, for every $w$ in $G$,
\[\mu \left( \{ w, w\gamma_1^{-1}, w\gamma_2^{-1} \} \right) \subset \mu(w) + M.\]

Let $g$~be an element of~$\Gamma_m$. We express it as a reduced word $g = g_l^m \cdots g_1^m$. One of the three words $w = g$, $g\gamma_1$ or~$g\gamma_2$ is very reduced. Hence we can apply the previous proposition to that word. Let $M' := \{0, \lambda(\gamma_1), \lambda(\gamma_2)\}$. We then have
\begin{align*}
\mu(g) \in \mu(w) + M
&\subset \left(\sum_{1 \leq j \leq l} m \lambda(g_j)\right) + M + M' + (l+2)M_\eps \\
&\subset \sum_{1 \leq j \leq l}(m \lambda(g_j) + M''),
\end{align*}
where $M''$~is some convex compact subset of~$A^\bullet$ containing both~$0$ and~$M + M' + 3M_\eps$. Hence it suffices to take~$m$ large enough for $\lambda(g_j) + \frac{1}{m}M''$ to be contained in~$\Omega^\bullet$.
\end{proof}

\begin{remark*}
Let $M_\Gamma$ and~$\Lambda_\Gamma$ denote the smallest closed convex cones in~$A^\bullet$ containing respectively $\mu(\Gamma)$ and~$\lambda(\Gamma)$. This calculation also proves that as $m$ tends to infinity, $M_{\Gamma_m}$ and~$\Lambda_{\Gamma_m}$ converge, in the sense of the Hausdorff distance in the projective space corresponding to~$A^\bullet$, to the convex hull of the half-lines $\Lambda_{\gamma_1}, \ldots, \Lambda_{\gamma_t}$ (resp. $\Lambda_{\gamma_1}, \ldots, \Lambda_{\gamma_t}, \iota(\Lambda_{\gamma_1}), \ldots, \iota(\Lambda_{\gamma_t})$).
\end{remark*}

\subsection{Construction of the group~$\Gamma$}
\label{sec:7.4}
\begin{theorem*}
Let $k$ be a local field, $\mathbf{G}$ a simply-connected isotropic semisimple $k$-group, $G := \mathbf{G}_k$, $\mu: G \to A^+$ some Cartan projection and $\iota: A^+ \to A^+$ the opposition involution.
\begin{enumerate}[label=\alph*)]
\item Let $\Omega$ be a nonempty convex open cone in~$A^+$ (see \ref{sec:7.1}). Then there exists a discrete subsemigroup~$\Gamma$, Zariski-dense in~$\mathbf{G}$, such that $\mu(\Gamma) \subset \Omega \cup \{1\}$.
\item If additionally $\iota(\Omega) = \Omega$, then there exists a discrete free subgroup~$\Gamma$, Zariski-dense in~$G$, such that $\mu(\Gamma) \subset \Omega \cup \{1\}$.
\end{enumerate}
\end{theorem*}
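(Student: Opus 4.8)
The plan is to assemble Lemma~\ref{sec:7.2} and the corollary of~\ref{sec:7.3}: first produce an $\eps$-Schottky family whose Lyapunov half-lines point into the cone, then pass to high powers so that the Cartan projection of the (semi)group they generate is swept into $\Omega \cup \{1\}$. Write $\Omega = A^+ \cap \Omega^\bullet$ with $\Omega^\bullet$ an open convex cone of~$A^\bullet$ (see~\ref{sec:7.1}). Since $\Omega^\bullet$ is open and meets the closed chamber (it contains the nonempty set~$\Omega$), it meets the open chamber as well, so I can choose elements $a_1, \ldots, a_t$ of~$A^{++}$ whose half-lines $\Lambda_{a_j}$ lie in~$\Omega^\bullet$; being a cone, $\Omega^\bullet$ then contains each $\Lambda_{a_j}$ entirely.

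For part~a) I take $t$ large enough that the inductive construction in Lemma~\ref{sec:7.2}~a) yields Zariski density, obtaining generators $\gamma_1, \ldots, \gamma_t$ with $\lambda(\gamma_j) = a_j$ satisfying conditions~i)--iv). By Lemma~\ref{sec:7.2}~b) there are $\eps > 0$ and $m_0$ such that for $m \geq m_0$ the semigroup $\Gamma_m$ generated by $\gamma_1^m, \ldots, \gamma_t^m$ is $\eps$-Schottky and Zariski-dense. Applying the corollary of~\ref{sec:7.3} with the cone~$\Omega^\bullet$, which contains the half-lines $\Lambda_{\gamma_j} = \Lambda_{a_j}$, and enlarging $m_0$ if necessary, gives $\mu(\Gamma_m) \subset \Omega \cup \{1\}$. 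For discreteness I read off a lower bound from the Proposition of~\ref{sec:7.3}: for a nontrivial very reduced word $w = g_l^{n_l}\cdots g_1^{n_1}$ in~$\Gamma_m$ one has $\mu(w) \in m\sum_j n_j a_{i_j} + (l+1)M_\eps$. Since each $a_j \in A^{++}$ and the highest weights $\omega_i$ are dominant, $|\omega_i(a_j)| > 1$ for all~$i$; as $l \leq \sum_j n_j$ and $M_\eps$ is fixed, for $m$ large the leading term $m\sum_j n_j a_{i_j}$ dominates the error $(l+1)M_\eps$ in every $\omega_i$-coordinate, so $\mu(w)$ stays bounded away from~$1$ uniformly in~$w$. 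As $\mu$ is proper, $\Gamma_m$ meets each compact set in finitely many points, hence is discrete; I set $\Gamma := \Gamma_m$.

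For part~b) the structure is identical, but I must respect the opposition involution. Since $\iota(\Omega) = \Omega$ and $\iota$ acts linearly preserving~$A^*$ (see~\ref{sec:7.1}), I may take $\Omega^\bullet$ to be $\iota$-invariant (replacing it by $\Omega^\bullet \cap \iota(\Omega^\bullet)$ if needed). Building $\gamma_1, \ldots, \gamma_t$ by the group version of Lemma~\ref{sec:7.2} with $\lambda(\gamma_j) = a_j \in \Omega \cap A^{++}$, I obtain a Zariski-dense $\eps$-Schottky group after passing to powers; the point is that $\lambda(\gamma_j^{-1}) = \iota(\lambda(\gamma_j)) = \iota(a_j)$ has half-line $\iota(\Lambda_{a_j}) \subset \Omega^\bullet$, so $\Omega^\bullet$ contains all the half-lines $\Lambda_{\gamma_j}, \Lambda_{\gamma_j^{-1}}$ demanded by the group version of the corollary of~\ref{sec:7.3}. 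Hence for large $m$ the group $\Gamma_m$ generated by $\gamma_1^m, \ldots, \gamma_t^m$ satisfies $\mu(\Gamma_m) \subset \Omega \cup \{1\}$, and by the remarks in~\ref{sec:6.5} it is discrete and free on these generators (ping-pong), and Zariski-dense by Lemma~\ref{sec:7.2}~b). Taking $\Gamma := \Gamma_m$ completes part~b).

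Essentially all the content lives in the earlier sections, so the proof is an assembly; the two points needing care are the geometric bookkeeping that $\Omega^\bullet$ contain precisely the half-lines required by the corollary of~\ref{sec:7.3} --- immediate in~a) but requiring the $\iota$-invariance argument in~b) --- and the discreteness of the \emph{semigroup} in~a), where Remark~\ref{sec:6.5} does not apply and one must instead balance the growing error $(l+1)M_\eps$ against the linear growth $m\sum_j n_j a_{i_j}$. I expect this balancing, together with the verification $|\omega_i(a_j)| > 1$, to be the only genuinely delicate step.
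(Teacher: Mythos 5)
Your proof is correct and follows essentially the same route as the paper: pick $a_j\in\Omega\cap A^{++}$, build the generators via Lemma~\ref{sec:7.2}, pass to high powers, and invoke Corollary~\ref{sec:7.3} (using $\lambda(\gamma_j^{-1})=\iota(a_j)\in\Omega$ in the group case). Your only addition is the explicit growth estimate establishing discreteness of the semigroup in part~a), a point the paper passes over by simply saying ``$\eps$-Schottky hence in particular discrete''; your version of that step is sound.
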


\begin{proof}
Let us choose some points $a_j$ in~$\Omega \cap A^{++}$. We start by constructing elements $\gamma_1, \ldots, \gamma_t$ of~$G$ as in Lemma~\ref{sec:7.2}. We then have $\lambda(\gamma_j) = a_j \in \Omega$ and, since $\iota(\Omega) = \Omega$, we have $\lambda(\gamma_j^{-1}) = \iota(a_j) \in \Omega$. Then Lemma~\ref{sec:7.2} and Corollary~\ref{sec:7.3} imply that there exists $m \geq 1$ such that the semigroup (resp. group) $\Gamma_m$ with generators $\gamma_1^m, \ldots, \gamma_t^m$ is $\eps$-Schottky hence in particular discrete and free, that it is Zariski-dense and that $\mu(\Gamma) \subset \Omega \cup \{1\}$.
\end{proof}

\subsection{Criterion for existence of a free subgroup acting properly on~$G/H$}
\label{sec:7.5}

\begin{theorem*}
Let $k$~be a local field, $\mathbf{G}$ a semisimple $k$-group, $\mathbf{H}$ a reductive $k$-subgroup of~$\mathbf{G}$, $\mathbf{A}_\mathbf{H}$ a maximal $k$-split torus of~$\mathbf{H}$, $\mathbf{A}$ a maximal $k$-split torus of~$\mathbf{G}$ containing~$\mathbf{A}_\mathbf{H}$, $G$,~$H$, $A_H$, $A$ the $k$-points, $W$~the Weyl group of~$G$ in~$A$, $A^+$~a positive Weyl chamber and $B^+$ the subset of~$A^+$ formed by the fixed points of the opposition involution (see~\ref{sec:2.2}).

There exists a discrete, non virtually unipotent subgroup~$\Gamma$ acting properly on~$G/H$ if and only if for every $w$ in~$W$, $w A_H$ does not contain~$B^+$.

In this case, we can always choose~$\Gamma$ to be free and Zariski-dense in~$G$.
\end{theorem*}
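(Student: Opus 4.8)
The plan is to deduce both implications from three tools already in place: the properness criterion of Theorem~\ref{sec:5.2}, the rigidity statement of Theorem~\ref{sec:3.3}, and the Schottky construction of Theorem~\ref{sec:7.4}, the only genuinely new input being an elementary convex‑geometric observation in the vector space $A^\bullet$ of~\ref{sec:7.1}. Assume $\mathbf{G}$ simply connected. By the Cartan decomposition of the reductive group $H$ we have $H\subset K_H A_H K_H$ for a maximal compact $K_H$ of $H$, so (Proposition~\ref{sec:5.1}) $\mu(H)$ is contained in $\mu(A_H)$ modulo the compacts of $A$, while $\mu(A_H)\subset\mu(H)$ trivially; moreover $\mu(A_H)=\bigcup_{w\in W}(wA_H\cap A^+)$, exactly as in the proof of Corollary~\ref{corollary_5.3.1}. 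By Lemma~\ref{lemma_3.1.1} together with Theorem~\ref{sec:5.2}, a discrete $\Gamma$ acts properly on $G/H$ if and only if $(\mu(\Gamma),\mu(A_H))$ is $A$-proper.

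For the ``only if'' part I argue contrapositively: suppose some $w_0$ satisfies $B^+\subset w_0A_H$. Since $B^+\subset A^+$, this gives $B^+=B^+\cap A^+\subset w_0A_H\cap A^+\subset\mu(A_H)\subset\mu(H)$, so $B^+$ lies in $\mu(H)$ modulo the compacts of $A$. If $\Gamma$ acts properly, then $(\mu(\Gamma),\mu(H))$ is $A$-proper, so by Lemma~\ref{lemma_3.1.2} so is $(\mu(\Gamma),B^+)$; as $\mu(B^+)=B^+$, Theorem~\ref{sec:5.2} shows $(\Gamma,B^+)$ is $G$-proper, whence Theorem~\ref{sec:3.3} forces $\Gamma$ to be virtually abelian (resp.\ nilpotent). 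Thus no discrete subgroup of the required type can act properly on $G/H$.

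For the ``if'' part I pass to $A^\bullet$, writing $\iota$ also for the induced involution, $V^\iota$ for its fixed subspace, $\mathfrak{b}:=A^*\cap V^\iota$ for the cone corresponding to $B^+$, and $V_w$ for the subspace of $A^\bullet$ spanned by $wA_H$. The averaging map $x\mapsto\frac12(x+\iota x)$ sends $\operatorname{int}(A^*)$ into $\mathfrak{b}$, so $\operatorname{int}(A^*)\cap V^\iota$ is a nonempty open subset of $V^\iota$. The hypothesis $B^+\not\subset wA_H$ reads $\mathfrak{b}\not\subset V_w$; since $\mathfrak{b}$ spans $V^\iota$, each $V_w\cap V^\iota$ is then a \emph{proper} subspace of $V^\iota$. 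Finitely many proper subspaces cannot cover the open set $\operatorname{int}(A^*)\cap V^\iota$, so I may pick there a ray $\ell$ meeting every $V_w$ only at $0$. As $\ell$ is $\iota$-fixed and interior to $A^*$, a sufficiently narrow $\iota$-invariant open convex cone $\Omega$ about $\ell$ lies in $A^+$ and satisfies $\overline{\Omega}\cap\bigcup_w V_w=\{0\}$. Theorem~\ref{sec:7.4}(b) now yields a free, Zariski-dense discrete $\Gamma$ with $\mu(\Gamma)\subset\Omega\cup\{1\}$; since the asymptotic cone of $\mu(A_H)$ lies in $\bigcup_w V_w$ and is met by $\overline{\Omega}$ only at $0$, the pair $(\mu(\Gamma),\mu(A_H))$ is $A$-proper, so $\Gamma$ acts properly on $G/H$ by the first paragraph, and a nonabelian free group is neither virtually abelian nor virtually nilpotent.

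The main obstacle is the convex-geometric step of the third paragraph, namely pinning down why the hypothesis is exactly the right one. The crux is the symmetrization remark that \emph{every} nonempty $\iota$-invariant open convex cone in $A^+$ meets $B^+$: this is what makes $B^+\not\subset wA_H$ simultaneously necessary (an $\iota$-stable $\Omega$ cannot avoid the asymptotic directions of $\mu(H)$ once some $wA_H$ swallows $B^+$) and sufficient (it leaves room inside $\mathfrak{b}$ to seat $\Omega$ transversally to all the $V_w$). Everything else is bookkeeping with the proper-pair formalism of~\ref{sec:3.1} and the already-established projection estimates.
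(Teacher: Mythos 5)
Your proposal is correct and follows essentially the same route as the paper: the ``only if'' direction by observing $B^+\subset\mu(H)$ and invoking Theorem~\ref{sec:3.3}, and the ``if'' direction by producing an $\iota$-invariant open convex cone $\Omega^\bullet$ whose closure meets each subspace spanned by $wA_H\cap A^o$ only at the origin, then applying Theorem~\ref{sec:7.4} and the properness criterion of Theorem~\ref{sec:5.2}. The only difference is that you spell out the convex-geometric existence of the cone (averaging onto $V^\iota$, the finite union of proper subspaces, the narrow cone about an $\iota$-fixed ray), which the paper simply asserts in one sentence.
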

\begin{remark*}
When $\characteristic(k) = 0$, we can replace the ``non virtually nilpotent'' condition by the ``non virtually abelian'' condition.
\end{remark*}

\begin{proof}
By (\cite{Mar2} I.1.5.5 and I.2.3.1) we may assume that $\mathbf{G}$ is simply connected. We have the equality
\[\mu(H) = \mu(A_H) = \bigcup_{w \in W} (w A_H \cap A^+).\]

If there exists $w$ in~$W$ such that $w A_H$ contains~$B^+$, then $\mu(H)$ also contains~$B^+$ and the conclusion follows from Theorem~\ref{sec:3.3} and Corollary~\ref{sec:4.1}.

Otherwise, we can find a convex open cone~$\Omega^\bullet$ in~$A^\bullet$ that is invariant by~$\iota$ and whose closure has trivial intersection with each of the $\mathbb{R}$-vector subspaces of~$A^\bullet$ generated by some $w A_H \cap A^o$. We set $\Omega := \Omega^\bullet \cap A^+$. We then choose a discrete free subgroup~$\Gamma$ Zariski-dense in~$G$ such that $\mu(\Gamma) \subset \Omega \cap \{1\}$ (Theorem~\ref{sec:7.4}). Corollary~\ref{sec:5.2} then proves that this group~$\Gamma$ acts properly on~$G/H$.
\end{proof}

For semigroups, the same proof furnishes the following result.

\begin{proposition*}
Let $\mathbf{G}$ be a semisimple $k$-group, $\mathbf{H}$ a reductive $k$-subgroup of~$\mathbf{G}$, $G$ and $H$ the $k$-points. Suppose that $\rank_k(\mathbf{H}) \neq \rank_k(\mathbf{G})$.

Then there exists a Zariski-dense discrete free subsemigroup of~$G$ that acts properly on~$G/H$.
\end{proposition*}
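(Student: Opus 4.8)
The plan is to repeat, almost verbatim, the argument for the ``if'' direction of Theorem~\ref{sec:7.5}, observing that for a \emph{semigroup} the symmetry requirement $\iota(\Omega)=\Omega$ disappears and is replaced by the weaker rank hypothesis. As there, I would first invoke (\cite{Mar2} I.1.5.5 and I.2.3.1) to reduce to the case where $\mathbf{G}$ is simply connected, fix a maximal $k$-split torus $\mathbf{A}$ of $\mathbf{G}$ containing a maximal $k$-split torus $\mathbf{A}_\mathbf{H}$ of $\mathbf{H}$, and record the equality $\mu(H)=\mu(A_H)=\bigcup_{w\in W}(wA_H\cap A^+)$. Embedding $A^+$ into $A^\bullet$ as in~\ref{sec:7.1}, it follows that $\mu(H)$ is contained in the finite union $\bigcup_{w\in W}E_w$, where $E_w$ denotes the $\mathbb{R}$-subspace of $A^\bullet$ spanned by $wA_H\cap A^o$.

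The role played by the hypothesis ``$B^+\not\subset wA_H$'' in Theorem~\ref{sec:7.5} is here played by the rank inequality. Since $\mathbf{A}_\mathbf{H}\subset\mathbf{A}$ and $\rank_k(\mathbf{H})\neq\rank_k(\mathbf{G})$, we have $\dim E_w=\rank_k(\mathbf{H})<\rank_k(\mathbf{G})=\dim A^\bullet$ for every $w$, so each $E_w$ is a proper subspace and the finite union $\bigcup_{w\in W}E_w$ has dense complement. I would therefore choose a ray in the interior of the convex cone $A^*$ that avoids $\bigcup_w E_w$, and take for $\Omega^\bullet$ a sufficiently narrow open convex cone about this ray, contained in the interior of $A^*$, whose closure meets each $E_w$ only at the origin. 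Setting $\Omega:=\Omega^\bullet\cap A^+$ then yields a nonempty convex open cone in $A^+$. In contrast with the group case, no invariance $\iota(\Omega)=\Omega$ is imposed, which is precisely why the weaker rank hypothesis suffices.

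I would then apply part~a) of Theorem~\ref{sec:7.4} to produce a discrete, free, Zariski-dense subsemigroup $\Gamma$ of $G$ with $\mu(\Gamma)\subset\Omega\cup\{1\}$ (freeness being part of what the $\eps$-Schottky construction of Chapters~\ref{sec:6} and~\ref{sec:7} delivers; see the remarks of~\ref{sec:6.5}). It then remains to check that $\Gamma$ acts properly on $G/H$. By Lemma~\ref{lemma_3.1.1} and Theorem~\ref{sec:5.2}, since $\Gamma$ is a closed subsemigroup and $H$ a closed subgroup, this amounts to showing that the pair $(\mu(\Gamma),\mu(H))$ is $A$-proper. As $\mu(\Gamma)\subset\overline{\Omega^\bullet}$ and $\mu(H)\subset\bigcup_w E_w$, this follows from the separation $\overline{\Omega^\bullet}\cap E_w=\{0\}$: passing to the unit sphere of $A^\bullet$ and using compactness shows that points of $\overline{\Omega^\bullet}$ tending to infinity leave every bounded neighbourhood of $\bigcup_w E_w$, so that $\mu(\Gamma)\cap\mu(H)M$ is bounded, and hence compact, for every compact $M\subset A$.

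The only genuine point to watch — the analogue of the single obstacle in Theorem~\ref{sec:7.5} — is the construction of the cone $\Omega^\bullet$: one must check that a finite union of proper subspaces of $A^\bullet$ cannot swallow an entire open cone lying in the interior of $A^*$, and that the closed cone can be taken to meet each $E_w$ trivially. Both are consequences of the rank inequality via a routine dimension and compactness argument. Once $\Omega^\bullet$ is in hand, the properness is a soft consequence of the separation $\overline{\Omega^\bullet}\cap E_w=\{0\}$ together with the criterion of Chapter~\ref{sec:5}, while discreteness, freeness and Zariski-density are supplied directly by Theorem~\ref{sec:7.4}.
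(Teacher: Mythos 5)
Your proposal is correct and follows essentially the same route as the paper, which simply states that ``the same proof'' as Theorem~\ref{sec:7.5} applies: the rank hypothesis makes each subspace spanned by $wA_H\cap A^o$ proper, one picks a (not necessarily $\iota$-invariant) open convex cone whose closure meets these subspaces only at the origin, applies Theorem~\ref{sec:7.4}~a) to get the Schottky subsemigroup, and concludes properness via Theorem~\ref{sec:5.2}. Your observation that dropping the $\iota$-invariance of $\Omega$ is exactly what allows the weaker rank hypothesis to replace the condition $B^+\not\subset wA_H$ is precisely the point.
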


\begin{remark*} (\cite{Kob1}) When $\rank_k(\mathbf{H}) = \rank_k(\mathbf{G})$, the only discrete subsemigroups acting properly on~$G/H$ are finite.
\end{remark*}

\subsection{Proof of the corollaries in the introduction}
\label{sec:7.6}

Corollary~\ref{corollary_1.2.1} is a particular case of the following corollary.
\begin{corollary*} ($\characteristic k = 0$)
We keep the notations of Theorem~\ref{sec:7.5}.

If there exists $w$ in $W$ such that $w A_H$ contains $B^+$, then $G/H$ has no compact quotient.
\end{corollary*}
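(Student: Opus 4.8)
The plan is to combine the two complementary obstructions already established in the paper: Theorem~\ref{sec:3.3}, which says that properness "past $B^+$" forces $\Gamma$ to be small, and Corollary~\ref{sec:4.1}, which says that small (nilpotent) groups never produce compact quotients. Together these should exclude \emph{every} discrete $\Gamma$ acting properly on $G/H$ with $\Gamma\backslash G/H$ compact. As in Corollary~\ref{corollary_1.2.1}, the statement concerns a noncompact $G/H$, so I would assume $G/H$ is not compact (if $G/H$ is compact there is nothing of substance to prove). I also use that $\characteristic k = 0$, so that Theorem~\ref{sec:3.3} delivers virtual abelianness, and that $\mathbf{H}$ is reductive, as required by Corollary~\ref{sec:4.1}(a).

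First I would convert the hypothesis on $w A_H$ into a statement about Cartan projections. Since $\mu(H)=\bigcup_{w\in W}(wA_H\cap A^+)$ and $B^+\subset A^+$, the assumption $wA_H\supset B^+$ gives $B^+\subset wA_H\cap A^+\subset\mu(H)$. As recalled in~\ref{sec:3.1}, $\mu(H)$ is contained in $H$ modulo the compacts of $G$, hence so is $B^+$: there is a compact $L$ with $B^+\subset LHL$ (concretely, each $b\in B^+\subset\mu(H)$ equals $\mu(h)$ for some $h\in H$, so $h\in KbK$ and thus $b\in KhK\subset KHK$, and one may take $L=K$). Now suppose for contradiction that $\Gamma$ is a discrete subgroup acting properly on $G/H$ with $\Gamma\backslash G/H$ compact. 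By Lemma~\ref{lemma_3.1.1} the pair $(\Gamma,H)$ is $G$-proper, and by the inclusion just obtained together with Lemma~\ref{lemma_3.1.2} the pair $(\Gamma,B^+)$ is $G$-proper as well. Theorem~\ref{sec:3.3} then forces $\Gamma$ to be virtually abelian; I pick a finite-index abelian, hence nilpotent, subgroup $\Gamma_0\leq\Gamma$. The map $\Gamma_0\backslash G/H\to\Gamma\backslash G/H$ is a finite covering, so $\Gamma_0\backslash G/H$ is again compact, and $\Gamma_0$ still acts properly.

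Finally I would invoke Corollary~\ref{sec:4.1}(a) with the nilpotent group $N:=\Gamma_0$: since $\mathbf{H}$ is reductive and $G/H$ is noncompact, the quotient $N\backslash G/H=\Gamma_0\backslash G/H$ is not quasicompact, contradicting its compactness. This rules out every properly acting $\Gamma$, so $G/H$ has no compact quotient. The genuine difficulty is entirely packaged inside the two cited results, so no hard new analysis is needed here; the only points demanding care are the reduction of the arithmetic hypothesis to the inclusion $B^+\subset LHL$ (feeding $B^+$ into $\mu(H)$), and the observation that passing to the finite-index subgroup $\Gamma_0$ preserves both properness and compactness of the quotient, so that the nilpotent obstruction of Corollary~\ref{sec:4.1} indeed applies.
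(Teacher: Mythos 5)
Your proof is correct and follows essentially the same route as the paper: the paper's one-line proof cites Theorem~\ref{sec:7.5} and Corollary~\ref{sec:4.1}, and the relevant direction of Theorem~\ref{sec:7.5} is itself obtained exactly as you do, by feeding $B^+\subset\mu(H)$ into Theorem~\ref{sec:3.3} via Lemmas \ref{lemma_3.1.1} and~\ref{lemma_3.1.2}. Your extra care about the implicit noncompactness of $G/H$ and about passing to a finite-index abelian subgroup before invoking Corollary~\ref{sec:4.1}(a) is exactly the right bookkeeping.
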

\begin{proof}
This follows from Theorem~\ref{sec:7.5} and Corollary~\ref{sec:4.1}.
\end{proof}

To prove Corollary~\ref{corollary_1.2.2}, it suffices to verify, by (\cite{Kob2}~1.9), that the following examples have no compact quotients:
\begin{align*}
G_\mathbb{C}/H_\mathbb{C} =
&\SO(4n+2, \mathbb{C})/\SO(4n+1, \mathbb{C})\quad (n \geq 1), \\
&\SL(2n, \mathbb{C})/\Sp(n, \mathbb{C})\quad (n \geq 2) \text{ and} \\
&E_{6, \mathbb{C}}/F_{4, \mathbb{C}}.
\end{align*}

To prove Corollary~\ref{corollary_1.2.3}, we must verify that
\[G/H = \SO(2n+1, 2n+1)/\SO(2n, 2n+1) \quad (n \geq 1)\]
has no compact quotient.

In each of these cases, we verify that for a suitable choice of~$A^+$, the set~$B^+$ is contained in~$H$; and we apply the previous corollary.

\noindent {\scshape Yale University Mathematics Department, PO Box 208283, New Haven, CT 06520-8283, USA} \\
\noindent {\itshape E-mail address:} \url{ilia.smilga@normalesup.org}
\end{document}